\newtheorem{thm}{Theorem}[section]
\newtheorem{lem}[thm]{Lemma}
\newtheorem{prop}[thm]{Proposition}
\newtheorem{cor}[thm]{Corollary}
\theoremstyle{definition}
\newtheorem{algorithm}[thm]{Algorithm}
\newtheorem{defn}[thm]{Definition}
\newtheorem{remark}[thm]{Remark}
\newtheorem{problem}[thm]{Problem}
\newcommand\ZZ{\mathbb{Z}}
\newcommand\CT{{\operatorname{CT}}}
\newcommand\HT{{\operatorname{HT}}}
\newcommand\SSCT{{\operatorname{SSCT}}}
\newcommand\SYT{{\operatorname{SYT}}}
\newcommand\ST{{\operatorname{ST}}}
\newcommand\vjdt{\phi_{\operatorname{vjdt}}}
\newcommand\mjdt{\phi_{\operatorname{mjdt}}}
\newcommand\vsort{\phi_{\operatorname{vsort}}}
\newcommand\msort{\phi_{\operatorname{msort}}}
\newcommand\tail{{\operatorname{tail}}}
\newcommand\head{{\operatorname{head}}}
\newcommand\LHT{{\operatorname{LHT}}}
\newcommand\SSYT{{\operatorname{SSYT}}}
\newcommand\leg{\operatorname{leg}}
\newcommand\arm{\operatorname{arm}}
\newcommand\Sym{\operatorname{Sym}}
\newcommand\lm{{\lambda/\mu}}
\newcommand\wt{\operatorname{wt}}
\renewcommand\vec[1]{\mathbf{#1}}
\newcommand\flr[1]{\left\lfloor #1\right\rfloor}
\newcommand\NN{\mathbb{N}}
\newcommand\GG{\mathcal{G}}
\newcommand\cell[3]{
\def\i{#1} \def\j{#2} \def\entry{#3}
\draw (\j-1,-\i)--(\j,-\i)--(\j,-\i+1);
\node at (\j-.5,-\i+.5) {\entry};
}
\newcommand\dcell[3]{
\def\i{#1} \def\j{#2} \def\entry{#3}
\draw (2*\j-2,-\i)--(2*\j,-\i)--(2*\j,-\i+1);
\node at (2*\j-1,-\i+.5) {\entry};
}
\newcommand\LHLL[2]{
  \def\X{#1} \def\Y{#2}
  \foreach \i in {0,...,\X}
  {
\pgfmathsetmacro{\m}{\Y+.5};
    \draw[gray,very thin] (\i,0) -- (\i,\m);
    \node at (\i,-.3) {\i};
  }
\pgfmathsetmacro{\m}{\Y+.5};
\node at (-.3,\m) {$\infty$};
\node at (2.5,\Y+0.5) {$\vdots$};\node at (6.5,\Y+0.5) {$\vdots$};
\node at (-.3,\Y) {$\vdots$};
\pgfmathsetmacro{\m}{\Y-1};
  \foreach \j in {0,...,\m}
  {
    \node at (-.3,\j) {\j};
  }
\pgfmathsetmacro{\m}{\Y};
  \foreach \x in {1,...,\X}
{ \foreach \j in {0,...,\m}
  {
\draw[gray,very thin] (\x-1,\j) -- (\x,\j);
}}
\pgfmathsetmacro{\m}{\Y-1};
 \foreach \i in {2,...,\X}
 \foreach \j in {0,...,\m} 
{\foreach \y in {2,...,\i}
{
\pgfmathsetmacro{\w}{\j+(\y-1)/\i};
\pgfmathsetmacro{\z}{\j+(\y-1)/(\i+1)};
\draw[gray,very thin] (\i-1,\w) -- (\i,\z);
}}
\node at (\X+1,1) {$\cdots$};
\node at (\X+1,\Y-1) {$\cdots$};
}
\newcommand\ELHLL[2]{
  \def\X{#1} \def\Y{#2}
  \foreach \i in {0,...,\X}
  {
\pgfmathsetmacro{\m}{\Y+1.5};
    \draw[gray,very thin] (\i,0) -- (\i,\m);
    \node at (\i,-.3) {\i};
  }
\pgfmathsetmacro{\m}{\Y+.5};
\node at (-.3,\m) {$\omega$};
\node at (-.6,\m+1) {$\omega+1$};
\node at (2.5,\Y) {$\vdots$};
\node at (6.5,\Y) {$\vdots$};
\node at (-.3,\Y) {$\vdots$};
\pgfmathsetmacro{\m}{\Y-1};
  \foreach \j in {0,...,\m}
  {
    \node at (-.3,\j) {\j};
  }
\pgfmathsetmacro{\m}{\Y-1};
  \foreach \x in {1,...,\X}
{ \foreach \j in {0,...,\m}
  {
\draw[gray,very thin] (\x-1,\j) -- (\x,\j);
}}
\pgfmathsetmacro{\m}{\Y-2};
 \foreach \i in {2,...,\X}
 \foreach \j in {0,...,\m} 
{\foreach \y in {2,...,\i}
{
\pgfmathsetmacro{\w}{\j+(\y-1)/\i};
\pgfmathsetmacro{\z}{\j+(\y-1)/(\i+1)};
\draw[gray,very thin] (\i-1,\w) -- (\i,\z);
}}
\pgfmathsetmacro{\j}{\Y+.5};
 \foreach \i in {1,...,\X}
{\foreach \y in {1,...,\i}
{
\pgfmathsetmacro{\w}{\j+(\y-1)/\i};
\pgfmathsetmacro{\z}{\j+(\y-1)/(\i+1)};
\draw[gray,very thin] (\i-1,\w) -- (\i,\z);
}}
\node at (\X+1,1) {$\cdots$};
\node at (\X+1,\Y-1) {$\cdots$};
\node at (\X+1,\Y+1) {$\cdots$};
}
\newcommand\wLHLL[1]{
  \def\X{#1} \def\Y{1}
  \foreach \i in {0,...,\X}
  {
    \draw[gray,very thin] (\i,0) -- (\i,\Y);
    \node at (\i,-.3) {\i};
  }
\pgfmathsetmacro{\m}{\Y+.5};
\node at (-.3,0) {$\omega$};
\node at (-.6,1) {$\omega+1$};
\pgfmathsetmacro{\j}{0};
 \foreach \i in {1,...,\X}
{\foreach \y in {1,...,\i}
{
\pgfmathsetmacro{\w}{\j+(\y-1)/\i};
\pgfmathsetmacro{\z}{\j+(\y-1)/(\i+1)};
\draw[gray,very thin] (\i-1,\w) -- (\i,\z);
}}
\node at (\X+1,.5) {$\cdots$};
}
\title{Enumeration of bounded lecture hall tableaux}
\author{Sylvie Corteel and Jang Soo Kim}
\address{University of California, Berkeley, United States}
\email{corteel@berkeley.edu}
\address{Sungkyunkwan University, Suwon,  South Korea}
\email{jangsookim@skku.edu}
\date{\today}
\keywords{lecture hall tableau, standard Young tableau, semistandard Young tableau, bijective proof, Schur function}
\subjclass[2010]{Primary: 05A15; Secondary: 33D45, 33D50, 05A30}
\begin{document}
\maketitle
\begin{center}
{\em To Christian Krattenthaler, our determinantal hero.}
\end{center}

\begin{abstract}
Recently the authors introduced lecture hall tableaux in their study of multivariate little $q$-Jacobi polynomials. In this paper, we enumerate bounded lecture hall tableaux. We show that their enumeration is closely related to standard and semistandard Young tableaux. We also show that the number of bounded lecture hall tableaux is the coefficient of the Schur expansion 
of $s_\lambda(m+y_1,\dots,m+y_n)$. To prove this result, we use two main tools: non-intersecting lattice paths and bijections. In particular we use ideas developed by Krattenthaler to prove bijectively the hook content formula.

\end{abstract}

% \tableofcontents

\section{Introduction}

Recently the authors \cite{LHT} introduced lecture hall tableaux in their study of multivariate little $q$-Jacobi polynomials $P_\lambda(x;a,b;q)$ with $t=q$. 
They showed that if we expand the Schur function $s_\lambda(x)$ in terms of $P_\mu(x;a,b;q)$
and vice versa as
\[
s_\lambda(x) = \sum_{\mu} M_{\lambda,\mu} P_\mu(x;a,b;q),\qquad
P_\lambda(x;a,b;q) = \sum_{\mu} N_{\lambda,\mu}s_\mu(x),
\]
then the coefficients $M_{\lambda,\mu}$ and $N_{\lambda,\mu}$ can be expressed as generating functions for lecture hall tableaux of shape $\lm$.

A lecture hall tableau is a certain filling of a skew shape $\lm$ with nonnegative integers. Since the entries in a lecture hall tableau can be arbitrarily large, there are infinitely many lecture hall tableaux of a given shape. If we give an upper bound on their entries we can consider the number of lecture hall tableaux.  The main goal of this paper is to enumerate such bounded lecture hall tableaux. 

Bounded lecture hall objects were first enumerated by the first author, Lee and Savage in \cite{CLS}. They showed that the number of sequences $\lambda=(\lambda_1,\ldots ,\lambda_n)$ of integers such that
$$
m\ge \frac{\lambda_1}{1}\ge \frac{\lambda_2}{2}\ge \dots \ge   \frac{\lambda_n}{n}\ge 0
$$
is equal to the number of sequences $\lambda=(\lambda_1,\ldots ,\lambda_n)$ of integers such that
$$
m\ge \frac{\lambda_1}{n}\ge \frac{\lambda_2}{n-1}\ge \dots \ge   \frac{\lambda_n}{1}\ge 0.
$$
This number is equal to $(m+1)^n$.  As remarked by Matt Beck \cite{Matt}, 
this is also the  Ehrhart polynomial of the $n$-cube.
This observation started a collection of very interesting papers connecting lecture hall partitions to geometric
combinatorics and in particular polytopes. We cite for example \cite{BBKSZ1,BBKSZ2,liustanley,Olsen}.
An overview of the techniques and results is presented in the survey by Carla Savage \cite{LHPSavage}.

We will see that counting bounded lecture hall tableaux is naturally related to standard and semistandard Young tableaux.  To state our results we first give definitions of related objects.

A \emph{partition} is a weakly decreasing sequence $\lambda=(\lambda_1,\dots,\lambda_k)$ of positive integers. Each integer $\lambda_i$ is called a \emph{part} of $\lambda$. The \emph{length} $\ell(\lambda)$ of $\lambda$ is the number of parts. We identify a partition $\lambda=(\lambda_1,\dots,\lambda_k)$ with its \emph{Young diagram}, which is a left-justified array of squares, called \emph{cells}, with $\lambda_i$ cells in the $i$th row for $1\le i\le k$.
In other words, we consider $\lambda=(\lambda_1,\dots,\lambda_k)$ as the set of cells $(i,j)$ such that $1\le i\le k$ and $1\le j\le \lambda_i$. 
For two partitions $\lambda$ and $\mu$ we write $\mu\subset \lambda$ to mean that the Young diagram of $\mu$ is contained in that of $\lambda$ as a set. In this case, a \emph{skew shape} $\lm$ is defined to be the set-theoretic difference $\lambda\setminus \mu$ of their Young diagrams. 
We denote by $|\lm|$ the number of cells in $\lm$. A partition $\lambda$ is also considered as a skew shape by $\lambda=\lambda/\emptyset$. 

A \emph{tableau} of shape $\lm$ is a filling of the cells in $\lm$ with nonnegative integers. In other words, a tableau is a map $T:\lm\to \NN$, where $\NN$ is the set of nonnegative integers. A \emph{standard Young tableau} of shape $\lm$ is a tableau of shape $\lm$ such that every integer $1\le i\le |\lm|$ appears exactly once and the entries are decreasing in each row and in each column. Let $\SYT(\lm)$ denote the set of standard Young tableaux of shape $\lm$. We note that it is more common to define a standard Young tableau to have entries increasing in each row and column. However, for our purpose in this paper, it is more convenient to have entries decreasing.

It is well known that the number of standard Young tableaux of shape $\lambda$ is given by the \emph{hook length formula} due to Frame, Robinson, and Thrall \cite{Frame1954}:
\begin{equation}
  \label{eq:6}
|\SYT(\lambda)| = \frac{|\lambda|!}{\prod_{(i,j)\in\lambda} h(i,j)},  
\end{equation}
where $h(i,j)=\lambda_i+\lambda_j'-i-j+1$ and $\lambda_j'$ is the number of integers $1\le r\le \ell(\lambda)$ with $\lambda_r\ge j$. 
There are many proofs of the hook length formula, see the survey by Adin and Roichman \cite{Adin2015}.
Among these a remarkable bijective proof of \eqref{eq:6} was found by Novelli, Pak, and Stoyanovskii \cite{NPS} using a ``jeu de taquin'' sorting algorithm. 

A \emph{semistandard Young tableau} of shape $\lm$ is a tableau of $\lm$ such that the entries are weakly decreasing in each row and strictly decreasing in each column. 
We denote by $\SSYT(\lm)$ the set of semistandard Young tableaux of shape $\lm$. 
We also denote by $\SSYT_n(\lm)$ the set of $T\in\SSYT(\lm)$ with $\max(T)<n$, i.e., the entries of $T$ are taken from $\{0,1,\dots,n-1\}$. 
Stanley \cite{Stanley1971} showed that the number of such bounded semistandard Young tableaux is given by the \emph{hook-content formula}:
\begin{equation}
  \label{eq:7}
|\SSYT_n(\lambda)|= \prod_{(i,j)\in\lambda} \frac{n+c(i,j)}{h(i,j)},
\end{equation}
where $c(i,j)=j-i$ is the \emph{content} of the cell $(i,j)$.  There are also many proofs of the hook content formula. 
Krattenthaler \cite{Kratt1999} found a bijective proof of \eqref{eq:7} that uses a modified jeu de taquin sorting algorithm. 
In this paper we will use Krattenthaler's jeu de taquin to investigate lecture hall tableaux. 

An \emph{$n$-lecture hall tableau} of shape $\lm$ is a tableau $L$ of shape $\lm$ satisfying the following conditions:
\[
\frac{L(i,j)}{n+c(i,j)}  \ge \frac{L(i,j+1)}{n+c(i,j+1)}, \qquad
\frac{L(i,j)}{n+c(i,j)} > \frac{L(i+1,j)}{n+c(i+1,j)}.
\]
The set of $n$-lecture hall tableaux of shape $\lm$ is
denoted by $\LHT_n(\lm)$. For $L\in \LHT_{n}(\lm)$, let $\flr{L}$ be the tableau of shape $\lm$ whose
$(i,j)$-entry is $\flr{L(i,j)/(n-i+j)}$, see Figure~\ref{fig:LHT1} for an example. 
 The set of $n$-lecture hall tableaux $L\in \LHT_{n}(\lm)$ with $\max(\flr{L})<m$ is denoted by $\LHT_{n,m}(\lm)$. 
Since the bounded lecture hall tableaux in $\LHT_{n,1}(\lm)$ play an important role in our paper, 
we give a special name for them. These objects have another description as follows. 

\begin{figure}
  \centering
\begin{tikzpicture}[scale=.6]
\cell14{25} \cell15{25} \cell16{21}
\cell22{16} \cell23{18} \cell24{21} \cell25{10} \cell264
\cell318 \cell329 \cell332 \cell340
\cell414 \cell424 \cell430
\draw (0,-4)--(0,-2)--(1,-2)--(1,-1)--(3,-1)--(3,0)--(6,0);
\end{tikzpicture} \qquad \qquad
\begin{tikzpicture}[scale=.6]
\cell14{$\frac{25}8$} \cell15{$\frac{25}9$} \cell16{$\frac{21}{10}$}
\cell22{$\frac{16}5$} \cell23{$\frac{18}6$} \cell24{$\frac{21}7$} \cell25{$\frac{10}8$} \cell26{$\frac49$}
\cell31{$\frac83$} \cell32{$\frac94$} \cell33{$\frac25$} \cell34{$\frac06$}
\cell41{$\frac42$} \cell42{$\frac43$} \cell43{$\frac04$}
\draw (0,-4)--(0,-2)--(1,-2)--(1,-1)--(3,-1)--(3,0)--(6,0);
\end{tikzpicture}\qquad \qquad
\begin{tikzpicture}[scale=.6]
\cell14{$3$} \cell15{$2$} \cell16{$2$}
\cell22{$3$} \cell23{$3$} \cell24{$3$} \cell25{$1$} \cell26{$0$}
\cell31{$2$} \cell32{$2$} \cell33{$0$} \cell34{$0$}
\cell41{$2$} \cell42{$1$} \cell43{$0$}
\draw (0,-4)--(0,-2)--(1,-2)--(1,-1)--(3,-1)--(3,0)--(6,0);
\end{tikzpicture}
  \caption{On the left is a lecture hall tableau $L\in \LHT_n(\lm)$ for $n=5$, $\lambda=(6,6,4,3)$ and $\mu=(3,1)$. 
The diagram in the middle shows the number $L(i,j)/(n+c(i,j))$ for each entry $(i,j)\in\lm$. The diagram on the right is the tableau $\flr{L}$.}
  \label{fig:LHT1}
\end{figure}

A \emph{semistandard $n$-content tableau} of shape $\lm$ is a semistandard Young tableau $S$ of shape $\lm$ with the additional condition that $0\le S(i,j)<n-i+j$ for every $(i,j)\in\lm$. We denote by $\SSCT_n(\lm)$ the set of semistandard $n$-content tableaux of shape $\lm$.  It is easy to see that  
\begin{align*}
\SSCT_n(\lm)&=\LHT_{n,1}(\lm),\\
\SSCT_n(\lambda)&=\SSYT_n(\lambda).  
\end{align*}

In this paper we prove the following formula for the number of bounded lecture hall tableaux. 
Given a partition $\mu$, we use the convention that $\mu_i=0$ for all integers $i>\ell(\mu)$.

\begin{thm}\label{thm:1}
For partitions $\lambda$ and $\mu$ with $\mu\subset \lambda$ and $\ell=\ell(\lambda)\le n$, we have
\[
|\LHT_{n,m}(\lm)|  = m^{|\lm|} \det\left(\binom{\lambda_i+n-i}{\mu_j+n-j}\right)_{1\le i,j\le \ell}.
\]
\end{thm}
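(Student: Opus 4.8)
The plan is to prove the formula in two stages: first establish the case $m=1$, i.e.\ $|\SSCT_n(\lm)| = \det\left(\binom{\lambda_i+n-i}{\mu_j+n-j}\right)_{1\le i,j\le\ell}$, and then upgrade to general $m$ by showing $|\LHT_{n,m}(\lm)| = m^{|\lm|}|\SSCT_n(\lm)|$. For the first stage I would use the non-intersecting lattice path (Lindstr\"om--Gessel--Viennot) method advertised in the abstract. A semistandard $n$-content tableau $S\in\SSCT_n(\lm)$ is a semistandard filling of $\lm$ with strictly decreasing columns, weakly decreasing rows, and the content bound $0\le S(i,j)<n-i+j$. Reading the $i$-th row of such a tableau as (the complement of) a lattice path in the usual SSYT-to-paths dictionary, a family of $\ell$ such paths is non-intersecting exactly when the column-strictness holds; the content bound $S(i,j)<n-i+j$ translates into each path being confined below a staircase-type boundary. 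The key computation is then to identify, for each $i$, the number of single paths from the start point determined by $\mu$ to the end point determined by $\lambda$ that stay under this boundary — this should come out to exactly $\binom{\lambda_i+n-i}{\mu_j+n-j}$-type binomial coefficients (a bounded-path count), so that the Lindstr\"om--Gessel--Viennot lemma yields the determinant. I expect the main obstacle to be getting the start/end points and the bounding staircase exactly right so that the single-path count is the clean binomial coefficient above rather than something requiring a further summation; the shift by $n-i$ in row $i$ (reflecting the content $c(i,j)=j-i$ in the definition of $\LHT_n$) has to be tracked carefully through the path encoding.

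For the second stage, the idea is a ``scaling'' or ``digit-extraction'' bijection: an $n$-lecture hall tableau $L\in\LHT_{n,m}(\lm)$ should decompose as the pair $(\flr{L},R)$, where $\flr{L}$ records the integer quotients $\flr{L(i,j)/(n-i+j)}$ and $R$ records the remainders $L(i,j)\bmod(n-i+j)$. The remainder tableau $R$ lies in $\SSCT_n(\lm)$: indeed, comparing $\frac{L(i,j)}{n+c(i,j)}\ge\frac{L(i,j+1)}{n+c(i,j+1)}$ with the corresponding inequality on the $\flr{\cdot}$-values should force, on ties of the quotient, the weakly-decreasing-row condition on remainders, and similarly the strict column inequality forces strict column decrease of remainders — this is exactly the mechanism by which $\SSCT_n(\lm)=\LHT_{n,1}(\lm)$. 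Meanwhile the quotient tableau $\flr{L}$ is an arbitrary tableau with entries in $\{0,1,\dots,m-1\}$ subject only to the weak inequalities between quotients that are automatically implied whenever the remainder comparison is strict, so that once $R\in\SSCT_n(\lm)$ is fixed, the number of compatible $\flr{L}$ is $m^{|\lm|}$. Making this last claim precise — that for each fixed remainder tableau $R$, the quotient digits at the $|\lm|$ cells are completely free in $\{0,\dots,m-1\}$, with no residual constraint coming from cells where the remainder comparison happens to be an equality — is where I expect the real work to be, and it is plausible the paper handles it by a more careful cell-by-cell or column-by-column argument rather than the crude count I sketched; if equality of remainders does impose a local constraint, one would instead need a sign-reversing or insertion argument to see that the total count is still exactly $m^{|\lm|}|\SSCT_n(\lm)|$.

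An alternative, and perhaps the route the authors actually take given the reference to Krattenthaler's bijective proof of the hook-content formula, is to bypass the determinant-versus-product bookkeeping by working directly with Krattenthaler's modified jeu de taquin: one would set up a weight-preserving bijection between $\LHT_{n,m}(\lm)$ and pairs consisting of an element of $\SYT(\lm)$ (or a growth-diagram datum) together with $|\lm|$ independent choices carrying the factor $m^{|\lm|}$ and a product of local weights assembling into the determinant. In that approach the main obstacle shifts to checking that the jeu de taquin moves interact correctly with the lecture-hall content denominators $n-i+j$ — i.e.\ that sliding a cell transforms a lecture hall tableau into a lecture hall tableau of the jeu-de-taquin-slid shape — which is the natural generalization of the fact (used implicitly above) that the content bound is compatible with column/row slides. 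Either way, the two conceptual pillars are (i) a path/determinant evaluation for $m=1$ and (ii) a remainder--quotient factorization giving the $m^{|\lm|}$, and I would present the proof in that order.
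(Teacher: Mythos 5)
Your first stage (the $m=1$ count of $\SSCT_n(\lm)$ via Lindstr\"om--Gessel--Viennot, with single-path counts equal to $\binom{\lambda_i+n-i}{\mu_j+n-j}$) is sound and matches what the paper does. The gap is in your second stage: the remainder--quotient factorization is false as stated. Writing $L(i,j)=r_{ij}\,(n+c(i,j))+a_{ij}$ with $0\le a_{ij}<n+c(i,j)$, the lecture hall row condition unpacks to ``$r_{ij}>r_{i,j+1}$, \emph{or} $r_{ij}=r_{i,j+1}$ and $a_{ij}\ge a_{i,j+1}$'' (and similarly with strictness for columns). This is a lexicographic condition, not a conjunction: when the quotients strictly drop, the remainders are unconstrained, so the remainder tableau need not be semistandard; and when the quotients tie, they impose a constraint, so the quotients are not free. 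Concretely, take $n=2$, $\lambda=(1,1)$, $\mu=\emptyset$, $m=2$, where the defining inequality is $L(1,1)/2>L(2,1)/1$ and $\SSCT_2(\lambda)$ has the single element $(1,0)$. The tableau $L=(2,0)$ lies in $\LHT_{2,2}(\lambda)$ but its remainder tableau is $(0,0)\notin\SSCT_2(\lambda)$; conversely the pair of quotients $(0,1)$ with remainders $(1,0)$ gives $L=(1,1)$, which violates $1/2>1$. The total counts do agree ($4=2^2\cdot 1$), but only after a nontrivial exchange, so ``the quotient digits are completely free'' cannot be the proof --- your own hedge about a needed insertion argument is exactly where the content of the theorem sits, and you have not supplied it.

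The paper closes this gap differently: it never separates quotients from remainders cell by cell. Instead it works with the multivariate generating function $L^n_{\lm}(\vec x)=\sum_{L}\vec x^{\flr L}$, applies Lindstr\"om--Gessel--Viennot on a ``lecture hall graph'' to get $L^n_\lm(\vec x)=\det\bigl(L^{\mu_j+n-j+1}_{(\lambda_i-\mu_j-i+j)}(\vec x)\bigr)$, and then evaluates the single-row entry as $L^n_{(k)}(\vec x)=|\vec x|^k\binom{n+k-1}{k}$. That one-row evaluation is where the lexicographic subtlety is handled: a one-row lecture hall tableau is split at the unique place where the ratios $L(1,j)/(n+j-1)$ drop below $1$, the initial segment is shifted down by a bijection, and an induction over this decomposition produces the clean factor $|\vec x|^k$. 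The global factor $m^{|\lm|}$ then comes out of the determinant by homogeneity, rather than from a cellwise product structure. (The paper's Section 4 jeu-de-taquin bijection is the ``insertion argument'' you anticipated, but it is substantially more elaborate than a digit extraction.) To repair your write-up along your own lines, you would need either this one-row decomposition or an explicit bijection replacing the false product decomposition.
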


Note that Theorem~\ref{thm:1} implies that
\begin{equation}
  \label{eq:LHT-SSCT}
|\LHT_{n,m}(\lm)|  = m^{|\lm|}|\LHT_{n,1}(\lm)|  = m^{|\lm|}|\SSCT_{n}(\lm)| .  
\end{equation}
The determinant in Theorem~\ref{thm:1} has another description in terms of standard Young tableaux. 

\begin{prop}\label{prop:KS}
For partitions $\lambda$ and $\mu$ with $\mu\subset \lambda$ and $\ell=\ell(\lambda)\le n$, we have
\[
|\SSCT_n(\lm)|=\det\left(\binom{\lambda_i+n-i}{\mu_j+n-j}\right)_{1\le i,j\le \ell}
=\frac{|\SYT(\lm)|}{|\lm|!} \prod_{x\in\lm} (n+c(x)).
\]
\end{prop}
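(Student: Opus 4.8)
The first equality is immediate from Theorem~\ref{thm:1}: by the text $\SSCT_n(\lm)=\LHT_{n,1}(\lm)$, so it is exactly Theorem~\ref{thm:1} specialized to $m=1$. (Alternatively it drops out of the Lindström--Gessel--Viennot lemma applied to a natural non-intersecting lattice path model for $\SSCT_n(\lm)$.) So the real content is the evaluation of the determinant, and that is what I would focus on.

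The plan for the second equality is to factor the matrix. I would set $a_i=\lambda_i-i$ and $b_j=\mu_j-j$, so that the $(i,j)$-entry $\binom{\lambda_i+n-i}{\mu_j+n-j}$ becomes $\binom{n+a_i}{n+b_j}$, and the sequences $a_1>\dots>a_\ell$, $b_1>\dots>b_\ell$ are strictly decreasing with $a_i\ge b_i$. Since $\ell\le n$ one checks $n+a_i\ge1$ and $n+b_j\ge0$ for all $i,j$, so
\[
\binom{n+a_i}{n+b_j}=\frac{(n+a_i)!}{(n+b_j)!\,(a_i-b_j)!},
\]
with the convention $1/k!=0$ for $k<0$ (which correctly produces $0$ precisely when $a_i<b_j$, matching the binomial coefficient). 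This exhibits the matrix as a product $\operatorname{diag}\bigl((n+a_i)!\bigr)_i\cdot\bigl(1/(a_i-b_j)!\bigr)_{i,j}\cdot\operatorname{diag}\bigl(1/(n+b_j)!\bigr)_j$ of a diagonal matrix, a matrix independent of $n$, and a diagonal matrix. Taking determinants gives
\[
\det\left(\binom{\lambda_i+n-i}{\mu_j+n-j}\right)_{1\le i,j\le\ell}=\left(\prod_{i=1}^{\ell}\frac{(n+a_i)!}{(n+b_i)!}\right)\det\left(\frac{1}{(a_i-b_j)!}\right)_{1\le i,j\le\ell}.
\]

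Then I would identify the two factors on the right. For the first, the contents of the cells in row $i$ of $\lm$ run through exactly $b_i+1,b_i+2,\dots,a_i$, so $\prod_{d=b_i+1}^{a_i}(n+d)=(n+a_i)!/(n+b_i)!$ and hence $\prod_{i}(n+a_i)!/(n+b_i)!=\prod_{x\in\lm}(n+c(x))$. For the second, $a_i-b_j=\lambda_i-\mu_j-i+j$, and $\det\bigl(1/(\lambda_i-\mu_j-i+j)!\bigr)_{1\le i,j\le\ell}=|\SYT(\lm)|/|\lm|!$ is the classical determinant formula for the number of standard Young tableaux of a skew shape (due to Aitken; equivalently, the Jacobi--Trudi identity under the exponential specialization). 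Combining the three displays yields the proposition.

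I do not anticipate a serious obstacle; the whole argument is mechanical once the factorization $\binom{n+a_i}{n+b_j}=(n+a_i)!/\bigl((n+b_j)!\,(a_i-b_j)!\bigr)$ is spotted. The only points demanding care are the vanishing bookkeeping in that identity when $a_i<b_j$, and checking that the hypothesis $\ell\le n$ is precisely what keeps all the factorials honest; if one prefers, one can instead prove the identity for $n\gg0$ and then invoke that both sides are polynomials in $n$.
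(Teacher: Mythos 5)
Your proposal is correct and follows essentially the same route as the paper: the first equality comes from the lattice-path/Jacobi--Trudi determinant (the paper's equation~\eqref{eq:14}, equivalently Theorem~\ref{thm:1} at $m=1$), and the second is obtained by factoring out $(\lambda_i+n-i)!$ from the rows and $1/(\mu_j+n-j)!$ from the columns and invoking Aitken's determinant formula for $|\SYT(\lm)|$. The paper merely asserts the two auxiliary identities as ``easily verified,'' whereas you supply the verification (the content telescoping and the $1/k!=0$ convention for $k<0$), so your write-up is a fleshed-out version of the same argument.
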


Kirillov and Scrimshaw \cite{KS} recently conjectured that the number $\frac{|\SYT(\lm)|}{|\lm|!} \prod_{x\in\lm} (n+c(x))$ on the right hand side of the identity in Proposition~\ref{prop:KS} is always an integer and proposed a problem to find a combinatorial object for this number. Proposition~\ref{prop:KS} gives an affirmative answer to the problem.  Theorem~\ref{thm:1} and Proposition~\ref{prop:KS} together with \eqref{eq:6} and \eqref{eq:7} immediately imply the following corollary. 

\begin{cor}
For partitions $\lambda$ and $\mu$ with $\mu\subset \lambda$ and $\ell(\lambda)\le n$, we have
\[
|\LHT_{n,m}(\lm)| = m^{|\lm|} \frac{|\SYT(\lm)|}{|\lm|!} \prod_{x\in\lm} (n+c(x)).
\]
In particular, the number of $n$-lecture hall tableaux of shape $\lambda$ whose maximum entry is less than $nm$ is 
\[
|\LHT_{n,m}(\lambda)| = m^{|\lambda|} |\SSYT_n(\lambda)| = m^{|\lambda|} \prod_{x\in\lambda} \frac{n+c(x)}{h(x)}.
\]
\end{cor}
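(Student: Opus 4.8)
The plan is to deduce the corollary directly from Theorem~\ref{thm:1}, Proposition~\ref{prop:KS}, and the classical hook length and hook-content formulas \eqref{eq:6} and \eqref{eq:7}; the statement is an immediate consequence, so the argument is a short chain of substitutions rather than a new proof, and I expect no real obstacle.

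For the first displayed identity I would begin with Theorem~\ref{thm:1}, which gives
\[
|\LHT_{n,m}(\lm)| = m^{|\lm|}\det\left(\binom{\lambda_i+n-i}{\mu_j+n-j}\right)_{1\le i,j\le \ell},
\]
and then apply Proposition~\ref{prop:KS}, which evaluates exactly this determinant as $\dfrac{|\SYT(\lm)|}{|\lm|!}\prod_{x\in\lm}(n+c(x))$. Combining the two lines yields $|\LHT_{n,m}(\lm)| = m^{|\lm|}\dfrac{|\SYT(\lm)|}{|\lm|!}\prod_{x\in\lm}(n+c(x))$, which is the first assertion; this step is valid for an arbitrary skew shape $\lm$.

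For the ``in particular'' statement I would specialize to $\mu=\emptyset$, so that $\lm=\lambda$, $|\lm|=|\lambda|$, $\SYT(\lm)=\SYT(\lambda)$, the product runs over the cells of the straight shape $\lambda$, and $\LHT_{n,m}(\lambda)$ is the set of $L\in\LHT_n(\lambda)$ with $\flr{L(i,j)/(n-i+j)}<m$ for every cell, which is the sense in which the entries of $L$ are bounded by $nm$. Here the hook length formula \eqref{eq:6} rewrites $\dfrac{|\SYT(\lambda)|}{|\lambda|!}$ as $\prod_{x\in\lambda}h(x)^{-1}$, turning the identity just proved into
\[
|\LHT_{n,m}(\lambda)| = m^{|\lambda|}\prod_{x\in\lambda}\frac{n+c(x)}{h(x)},
\]
and the hook-content formula \eqref{eq:7} recognizes the product on the right as $|\SSYT_n(\lambda)|$, giving $|\LHT_{n,m}(\lambda)| = m^{|\lambda|}|\SSYT_n(\lambda)|$.

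The only point to keep straight is that the passage from the $\SYT$/determinant form to the $\prod\frac{n+c(x)}{h(x)}$ and $\SSYT_n$ forms uses \eqref{eq:6} and \eqref{eq:7}, which are hook formulas for straight shapes only; this is precisely why the cleaner product formula is stated in the $\mu=\emptyset$ case while the determinantal version covers all skew shapes. With that caveat there is nothing further to verify.
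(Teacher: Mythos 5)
Your argument is correct and is exactly the derivation the paper intends: the corollary is stated there as an immediate consequence of Theorem~\ref{thm:1} and Proposition~\ref{prop:KS} together with \eqref{eq:6} and \eqref{eq:7}, and your chain of substitutions (determinant evaluation for general skew shapes, then $\mu=\emptyset$ plus the hook length and hook-content formulas for the product form) is that same route. Your caveat that \eqref{eq:6} and \eqref{eq:7} apply only to straight shapes, and your careful restatement of the bound as $\flr{L(i,j)/(n-i+j)}<m$ rather than a literal bound on the entries, are both appropriate.
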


Using Naruse's hook length formula for $|\SYT(\lm)|$ in \cite{Naruse}, we get another enumerative formula:
\begin{cor}\label{cor:naruse}
For partitions $\lambda$ and $\mu$ with $\mu\subset \lambda$ and $\ell(\lambda)\le n$, the number of bounded lecture tableaux of shape $\lm$ is
$$
|\LHT_{n,m}(\lm)|=m^{|\lm|}  \prod_{x\in\lm} (n+c(x))\sum_D \prod_{x\in \lambda\backslash D} \frac{1}{h(x)},
$$
where the sum is over all excited diagrams $D$ of $\lm$. See \cite{MPP,Naruse} for details on excited diagrams.
\end{cor}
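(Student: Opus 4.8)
The plan is to obtain Corollary~\ref{cor:naruse} as an immediate consequence of the Corollary stated just above it, combined with Naruse's hook length formula for skew standard Young tableaux. Recall that the preceding Corollary already gives
\[
|\LHT_{n,m}(\lm)| = m^{|\lm|}\,\frac{|\SYT(\lm)|}{|\lm|!}\prod_{x\in\lm}(n+c(x)).
\]
Naruse's formula \cite{Naruse} (see also \cite{MPP} for two proofs) states that
\[
|\SYT(\lm)| = |\lm|!\sum_{D}\prod_{x\in\lambda\setminus D}\frac{1}{h(x)},
\]
where $D$ ranges over all excited diagrams of $\lm$ inside $\lambda$ and $h(x)$ is the hook length of $x$ computed in the straight shape $\lambda$. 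Substituting this into the displayed identity above and cancelling the factor $|\lm|!$ gives exactly the asserted formula, so the proof is a one-line substitution.

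The one point worth a sentence of justification is purely a matter of conventions. In this paper $\SYT(\lm)$ denotes fillings that are \emph{decreasing} along rows and columns, whereas Naruse's formula is customarily stated for the increasing convention. Since the complementation map sending an entry $k$ to $|\lm|+1-k$ is a shape-preserving bijection between the two sets of tableaux, the cardinality $|\SYT(\lm)|$ is the same under either convention and Naruse's theorem applies verbatim. Likewise, the normalization $h(i,j)=\lambda_i+\lambda_j'-i-j+1$ used throughout the paper agrees with the hook length appearing in \cite{Naruse}, and the excited diagrams are precisely the subdiagrams of $\lambda$ obtainable from $\mu$ by excited moves, so no further translation is needed.

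I do not expect any real obstacle here: all the work is carried by Theorem~\ref{thm:1} and Proposition~\ref{prop:KS}, which are proved earlier, together with the externally cited Naruse formula. The write-up need only recall the preceding Corollary, invoke Naruse, flag the decreasing-versus-increasing convention, and cancel the factorial.
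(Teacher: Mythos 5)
Your proposal matches the paper exactly: the corollary is obtained by substituting Naruse's formula $|\SYT(\lm)| = |\lm|!\sum_D \prod_{x\in\lambda\setminus D} h(x)^{-1}$ into the preceding corollary and cancelling $|\lm|!$. The extra remark about the decreasing-versus-increasing convention is correct and harmless, so nothing further is needed.
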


In this paper we also show that the number of bounded lecture hall tableaux occurs naturally as the coefficient in the Schur expansion of $s_\lambda(m+y_1,\dots,m+y_n)$. 
Recall that for a sequence of variables $\vec x=(x_0,x_1,\dots)$, the \emph{(skew) Schur function} $s_\lm(\vec x)$ is defined by
\[
s_\lm(\vec x) = \sum_{T\in\SSYT(\lm)} \vec x^T,
\]
where $\vec x^T=\prod_{(i,j)\in\lm} x_{T(i,j)}$. Note that
\[
s_\lambda(x_0,x_1,\dots,x_{n-1})=\sum_{T\in\SSYT_n(\lambda)} \vec x^T,
\]
and $|\SSYT_n(\lambda)|=s_\lambda(1^n)$, where $(1^n)$ is the sequence $(1,1,\dots,1)$ of $n$ ones.

\begin{thm}\label{thm:2}
For integers $n,m\ge0$, variables $y_1,\dots,y_n$, and a partition $\lambda$ with at most $n$ parts, we have
\[
s_\lambda(m+y_1,\dots,m+y_n)=\sum_{\mu\subset\lambda} |\LHT_{n,m}(\lm)| s_\mu(y_1,\dots,y_n).
\]  
\end{thm}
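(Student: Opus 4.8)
The plan is to deduce the identity from the bialternant (Weyl) formula for Schur polynomials together with the Cauchy--Binet formula, and then to recognize the resulting coefficients via Theorem~\ref{thm:1}. Recall that for a partition $\lambda$ with at most $n$ parts,
\[
s_\lambda(x_1,\dots,x_n)=\frac{\det\bigl(x_i^{\lambda_j+n-j}\bigr)_{1\le i,j\le n}}{\det\bigl(x_i^{n-j}\bigr)_{1\le i,j\le n}}.
\]
Setting $x_i=m+y_i$, the denominator becomes $\det\bigl((m+y_i)^{n-j}\bigr)=\det\bigl(y_i^{n-j}\bigr)$ by column operations (each $(m+y_i)^{n-j}$ is monic of degree $n-j$ in $y_i$, so column reduction turns the first matrix into the second). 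Thus the whole problem reduces to evaluating the numerator $\det\bigl((m+y_i)^{\lambda_j+n-j}\bigr)$.

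First I would expand $(m+y_i)^{\lambda_j+n-j}=\sum_{k\ge0}\binom{\lambda_j+n-j}{k}m^{\lambda_j+n-j-k}y_i^{k}$, which factors the numerator matrix as a product $BM$ with $B=\bigl(y_i^{k}\bigr)_{1\le i\le n,\ k\ge0}$ and $M=\bigl(\binom{\lambda_j+n-j}{k}m^{\lambda_j+n-j-k}\bigr)_{k\ge0,\ 1\le j\le n}$. Only finitely many rows of $M$ are nonzero, so Cauchy--Binet applies: $\det\bigl((m+y_i)^{\lambda_j+n-j}\bigr)=\sum_{K}\det(B_{[n],K})\det(M_{K,[n]})$, the sum over $n$-element subsets $K=\{k_1<\dots<k_n\}$ of $\ZZ_{\ge0}$, where $B_{[n],K}$ is the $n\times n$ submatrix of $B$ on column set $K$ and similarly for $M_{K,[n]}$. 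Each $K$ corresponds to a partition $\mu$ with at most $n$ parts via the standard bijection $k_i=\mu_{n+1-i}+i-1$. Reversing columns of $B_{[n],K}$ (sign $(-1)^{\binom{n}{2}}$) gives $\det(B_{[n],K})=(-1)^{\binom{n}{2}}\det\bigl(y_i^{\mu_j+n-j}\bigr)=(-1)^{\binom{n}{2}}s_\mu(y_1,\dots,y_n)\det\bigl(y_i^{n-j}\bigr)$; pulling $m^{-k_i}$ out of row $i$ and $m^{\lambda_j+n-j}$ out of column $j$ of $M_{K,[n]}$ and then reversing its rows (sign $(-1)^{\binom{n}{2}}$) gives $\det(M_{K,[n]})=(-1)^{\binom{n}{2}}m^{|\lambda|-|\mu|}\det\bigl(\binom{\lambda_i+n-i}{\mu_j+n-j}\bigr)_{1\le i,j\le n}$, the exponent $\sum_j(\lambda_j+n-j)-\sum_i k_i=|\lambda|-|\mu|$ being computed directly. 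The two signs cancel, and dividing through by the denominator yields
\[
s_\lambda(m+y_1,\dots,m+y_n)=\sum_{\ell(\mu)\le n}m^{|\lambda|-|\mu|}\det\left(\binom{\lambda_i+n-i}{\mu_j+n-j}\right)_{1\le i,j\le n}s_\mu(y_1,\dots,y_n).
\]

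Next I would show $\det\bigl(\binom{\lambda_i+n-i}{\mu_j+n-j}\bigr)=0$ whenever $\mu\not\subset\lambda$, so the sum collapses to $\mu\subset\lambda$, for which $|\lambda|-|\mu|=|\lm|$. Indeed, if $\mu\not\subset\lambda$, let $r$ be minimal with $\mu_r>\lambda_r$; then for all $i\ge r$ and all $j\le r$ one has $\lambda_i+n-i\le\lambda_r+n-r<\mu_r+n-r\le\mu_j+n-j$, hence $\binom{\lambda_i+n-i}{\mu_j+n-j}=0$, producing an $(n-r+1)\times r$ all-zero block; since $(n-r+1)+r>n$, the determinant vanishes. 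Finally, for $\mu\subset\lambda$, Theorem~\ref{thm:1} identifies $m^{|\lm|}\det\bigl(\binom{\lambda_i+n-i}{\mu_j+n-j}\bigr)$ with $|\LHT_{n,m}(\lm)|$, which gives exactly the claimed expansion.

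The individual steps are routine; the one place that demands care is the Cauchy--Binet bookkeeping --- keeping straight the dictionary between the index sets $K$ and the partitions $\mu$, and checking that the two row/column reversals contribute signs $(-1)^{\binom{n}{2}}$ that cancel against each other. One could instead bypass Theorem~\ref{thm:1} and interpret $m^{|\lm|}\det\bigl(\binom{\lambda_i+n-i}{\mu_j+n-j}\bigr)$ directly as the number of bounded lecture hall tableaux through the non-intersecting lattice path model, but reusing Theorem~\ref{thm:1} is the most economical route.
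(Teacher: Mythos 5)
Your argument is correct, but it takes a genuinely different route from the paper. The paper obtains Theorem~\ref{thm:2} as the specialization $\mu=\emptyset$, $\vec x=(1^m)$ of the skew, fully multivariate Theorem~\ref{thm:main}, which is in turn proved by encoding tableaux as non-intersecting paths on the (extended) lecture hall graph and applying the Lindstr\"om--Gessel--Viennot lemma (and, separately, by an explicit jeu de taquin bijection). You instead work directly from the bialternant formula and Cauchy--Binet, which amounts to a self-contained proof of the $m$-deformation of Lascoux's identity \eqref{eq:Lascoux}; the paper explicitly remarks that Theorem~\ref{thm:2} can also be deduced from \eqref{eq:Lascoux} together with Theorem~\ref{thm:1}, and your computation is in effect a direct proof of that combined statement (one could alternatively get your displayed identity from \eqref{eq:Lascoux} by the substitution $y_i\mapsto y_i/m$ and homogeneity). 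Your route is shorter and more elementary for this particular corollary; the paper's route buys the stronger skew, refined statement and the bijection. Your bookkeeping is right: the exponent $\sum_j(\lambda_j+n-j)-\sum_i k_i=|\lambda|-|\mu|$, the two factors of $(-1)^{\binom{n}{2}}$ cancel, and the block-of-zeros argument showing the determinant vanishes for $\mu\not\subset\lambda$ is standard and correct.

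Two small points to tidy up. First, Cauchy--Binet produces the $n\times n$ determinant $\det\bigl(\binom{\lambda_i+n-i}{\mu_j+n-j}\bigr)_{1\le i,j\le n}$, whereas Theorem~\ref{thm:1} is stated for the $\ell\times\ell$ determinant with $\ell=\ell(\lambda)$; you should note that for $\mu\subset\lambda$ the entries with $i>\ell$ and $j\le\ell$ are $\binom{n-i}{\mu_j+n-j}=0$ (since $\mu_j+n-j\ge n-j>n-i$) and the lower-right block $\bigl(\binom{n-i}{n-j}\bigr)_{\ell<i,j\le n}$ is unitriangular, so the two determinants agree. Second, pulling $m^{-k_i}$ out of the rows presupposes $m\ne 0$; since both sides of the resulting identity are polynomials in $m$, this is harmless, but it deserves a word given that the theorem allows $m=0$.
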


If $m=1$ in Theorem~\ref{thm:2} we obtain the following formula due to Lascoux \cite{Lascoux1978}:
\begin{equation}
  \label{eq:Lascoux}
s_\lambda(1+y_1,\dots,1+y_n)=\sum_{\mu\subset\lambda} \det\left(\binom{\lambda_i+n-i}{\mu_j+n-j}\right)_{1\le i,j\le \ell(\lambda)} s_\mu(y_1,\dots,y_n).
\end{equation}
Lascoux \cite{Lascoux1978} used \eqref{eq:Lascoux} to compute the Chern classes of the exterior square and symmetric square of a vector bundle,
see  also \cite[Chapter 1, \S3, Example 10]{Macdonald}. We note Theorem~\ref{thm:2} can also be obtained from \eqref{eq:Lascoux} and Theorem~\ref{thm:1}. 

Our next theorem is a generalization of Theorem~\ref{thm:2} to skew shapes. In order to state the theorem we first need to introduce some definitions. 

For any tableau $T$ of shape $\lm$, let
\[
\vec x^T = \prod_{(i,j)\in\lm} x_{T(i,j)}.
\]
We define
\[
L_{\lm}^{n}(\vec x) = \sum_{T\in\LHT_{n}(\lm)} \vec x^{\flr{T}},
\]
and
\[
S_{\lm}^{n}(\vec x) = \sum_{T\in\SSCT_{n}(\lm)} \vec x^{T}.
\]
Note that $S_{\lambda}^{n}(\vec x)= s_\lambda(x_0,x_1,\dots,x_{n-1})$. 

The following theorem is the main theorem of this paper, which is a skew version of Theorem~\ref{thm:2}.

\begin{thm}\label{thm:main}
Let $\lambda$ and $\mu$ be partitions with $\mu\subset \lambda$ and $\ell(\lambda)\le n$.
For any sequences $\vec x=(x_0,x_1,\dots)$ and $\vec y=(y_0,y_1,\dots)$ of variables, we have
\[
S^n_{\lm}(|\vec x|+\vec y) = \sum_{\mu\subset\nu\subset\lambda} L_{\lambda/\nu}^{n}(\vec x) S^n_{\nu/\mu}(\vec y),
\]  
where $|\vec x|=x_0+x_1+\cdots$ and $|\vec x|+\vec y = (|\vec x|+y_0,|\vec x|+y_1,\dots)$. 
\end{thm}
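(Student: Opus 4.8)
The plan is to prove Theorem~\ref{thm:main} combinatorially by splitting a semistandard $n$-content tableau of shape $\lm$ in the alphabet interleaving $\vec x$ and $\vec y$ into two pieces. Concretely, I would work with the alphabet $\{x_0,x_1,\dots\}\cup\{y_0,y_1,\dots\}$ ordered so that every $x$-letter is larger than every $y$-letter (this is exactly the substitution $|\vec x|+\vec y$: each variable of $S^n_{\lm}$ evaluated at position $k$ becomes $\sum_i x_i + y_k$, so recording which letter is used yields the generating function $S^n_{\lm}(|\vec x|+\vec y)$). Given $T\in\SSCT_n(\lm)$ over this ordered alphabet, let $\nu$ be the sub-skew-shape consisting of cells whose entry is an $x$-letter. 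The first claim to establish is that $\nu$ is a partition with $\mu\subset\nu\subset\lambda$: since $x$-letters are the large ones and entries decrease along rows and columns, the set of $x$-cells forms a ``staircase-closed'' region in the upper-left, i.e.\ $\lambda\setminus\nu$ on the $x$-side and $\nu\setminus\mu$ on the $y$-side are each skew shapes. The $y$-part of $T$ restricted to $\nu/\mu$ is literally an element of $\SSCT_n(\nu/\mu)$, contributing $S^n_{\nu/\mu}(\vec y)$.

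The genuinely delicate point is identifying the $x$-part. On the skew shape $\lambda/\nu$, the cell $(i,j)$ carries content $c(i,j)=j-i$, so a semistandard $n$-content tableau structure there would require entries bounded by $n-i+j$. But what we get from restricting $T$ is a semistandard tableau on $\lambda/\nu$ whose entries are $x$-letters subject to the bound $0\le S(i,j)<n-i+j$ inherited from $T$ being in $\SSCT_n$. This is exactly $\SSCT_n(\lambda/\nu)$, hence it contributes $S^n_{\lambda/\nu}(\vec x)$ — except the theorem asks for $L_{\lambda/\nu}^n(\vec x)$, the generating function over $\LHT_n$ via the floor operation. So the key step is to invoke the identity $L_{\lm}^n(\vec x)=S_{\lm}^n(\vec x)$, i.e.\ that the map $L\mapsto \flr{L}$ is a weight-preserving bijection from $\LHT_n(\lm)$ onto $\SSCT_n(\lm)$ (equivalently, that $\LHT_n$ objects with a fixed floor-tableau form a set whose size is absorbed — but here since we only track $\flr{L}$, this is just the statement that floors of lecture hall tableaux range over semistandard $n$-content tableaux, which must be a lemma proved earlier in the paper, paralleling the identity $\SSCT_n(\lm)=\LHT_{n,1}(\lm)$ noted in the introduction). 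Once that is in hand, $S^n_{\lambda/\nu}(\vec x)=L^n_{\lambda/\nu}(\vec x)$ and the decomposition matches the right-hand side term by term.

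So the proof assembles as follows. First I would set up the ordered alphabet and observe that $S^n_{\lm}(|\vec x|+\vec y)=\sum_{T} \vec w^T$ summed over semistandard $n$-content fillings $T$ of $\lm$ in this alphabet, where the weight $\vec w^T$ is the product of the corresponding $x$- and $y$-variables (the $|\vec x|+y_k$ expansion is a multinomial that precisely reproduces this sum). Second, for each such $T$ define $\nu$ as above and verify $\mu\subset\nu\subset\lambda$ with both differences skew. Third, check that $T\mapsto(T|_{\lambda/\nu},\,T|_{\nu/\mu})$ is a bijection onto $\coprod_{\mu\subset\nu\subset\lambda}\SSCT_n(\lambda/\nu;\{x\text{'s}\})\times\SSCT_n(\nu/\mu;\{y\text{'s}\})$ — the inverse just glues two such tableaux back, with the skew-shape and content-bound conditions at the interface between the $x$-region and $y$-region automatically satisfied because every $x$-letter exceeds every $y$-letter (so the row-weak/column-strict inequalities across the boundary hold for free) and the content bound is cell-local. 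Fourth, apply $S^n = L^n$ on the $x$-side. The main obstacle I expect is the third step, specifically verifying that the interface conditions are vacuous: one must make sure that when an $x$-cell sits directly left of or above a $y$-cell, no extra inequality is imposed beyond what each piece already guarantees — this follows from the chosen total order on letters, but it is the crux that makes the gluing a clean bijection rather than merely an injection, and it is where a careful case analysis on the shape of $\nu$ relative to $\lambda$ and $\mu$ is needed.
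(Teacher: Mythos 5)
There is a fatal gap at the step where you invoke ``$L^n_{\lm}(\vec x)=S^n_{\lm}(\vec x)$'': this identity is false. By Theorem~\ref{thm:L}, $L^n_{\lm}(\vec x)=|\vec x|^{|\lm|}\,|\SSCT_n(\lm)|$ depends on $\vec x$ only through the single quantity $|\vec x|=x_0+x_1+\cdots$, whereas $S^n_{\lm}(\vec x)$ is a genuine polynomial in the individual variables. Already for a single cell, $S^n_{(1)}(\vec x)=x_0+\cdots+x_{n-1}$ while $L^n_{(1)}(\vec x)=n(x_0+x_1+\cdots)$. The identification $\SSCT_n(\lm)=\LHT_{n,1}(\lm)$ from the introduction identifies a semistandard content tableau with the lecture hall tableau $L$ itself (which then satisfies $\flr L\equiv 0$), not with $\flr L$; the map $L\mapsto\flr L$ on all of $\LHT_n(\lm)$ is many-to-one and its image is not $\SSCT_n(\lm)$. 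So the $x$-side of your decomposition cannot be converted into $L^n_{\lambda/\nu}(\vec x)$ in the way you propose.

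This error is forced by an earlier misstep: $|\vec x|+\vec y$ is a scalar shift of each variable, not a union of alphabets, so $S^n_{\lm}(|\vec x|+\vec y)$ is not the generating function for semistandard $n$-content tableaux in an interleaved ordered alphabet. Expanding $\prod_{(i,j)}\bigl(|\vec x|+y_{T(i,j)}\bigr)$ over $T\in\SSCT_n(\lm)$, each cell independently either keeps its $y$-index $T(i,j)$ or contributes the full sum $|\vec x|$ with no record of any $x$-index and no semistandardness condition among the $x$-choices; moreover the set of cells choosing $|\vec x|$ is an arbitrary subset of $\lm$, not a skew shape $\lambda/\nu$. (Note also that in $\SSCT_n$ the variable index \emph{is} the entry, which is tied to the content bound, so ``which letter'' is not a free label as in the classical splitting of $s_\lambda$ over a union of alphabets; already for one cell your model cannot produce the coefficient $n$ on $|\vec x|$.) The entire content of the theorem lies in regrouping this expansion so that the $x$-cells do form a skew shape $\lambda/\nu$ counted with the lecture-hall multiplicity $|\vec x|^{|\lambda/\nu|}|\SSCT_n(\lambda/\nu)|$. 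The paper achieves this either by a Lindstr\"om--Gessel--Viennot argument on an extended lecture hall graph, reducing to the identity $h_t(y_0+|\vec x|,\dots,y_a+|\vec x|)=\sum_k h_k(y_0,\dots,y_a)\,|\vec x|^{t-k}\binom{a+t}{t-k}$ applied entrywise in a Jacobi--Trudi determinant, or by a jeu-de-taquin sorting bijection that genuinely moves values across the $x$/$y$ interface. Your proposal contains no mechanism playing either role.
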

In this paper we give two proofs of Theorem~\ref{thm:main}: one proof uses a Jacobi--Trudi type determinant identity and the other proof is bijective. In particular the bijective proof of Theorem~\ref{thm:main} uses a variation of jeu de taquin due to Krattenthaler \cite{Kratt1999}. 

If $\mu=\emptyset$ and $\vec x=(1^m)$, 
in Theorem~\ref{thm:main},  we have
\[
S^n_{\lambda}(m+\vec y) = \sum_{\nu\subset\lambda} L_{\lambda/\nu}^{n}(1^m) S^n_{\nu}(\vec y).
\]  
Since $S^n_{\nu}(\vec y) = s_{\nu}(y_0,y_1,\dots,y_{n-1})$ for any partition $\nu$, we obtain Theorem~\ref{thm:2}. 

We can also deduce \eqref{eq:LHT-SSCT} from Theorem~\ref{thm:main} as follows. 
If $\vec x=(x_0,\ldots ,x_{m-1})$ and $\vec y = (0,0,\dots)$ in Theorem~\ref{thm:main},  we have
\begin{equation}
  \label{eq:3}
S^n_{\lm}(|\vec x|,|\vec x|,\dots) = L_{\lambda/\mu}^{n}(\vec x) .  
\end{equation}
By definition we have $L_{\lambda/\mu}^{n}(1^m) =|\LHT_{n,m}(\lm)|$ and 
\begin{equation}
  \label{eq:4}
S^n_{\lm}(|\vec x|,|\vec x|,\dots)= |\vec x|^{|\lm|}S^n_{\lm}(1,1,\dots) = |\vec x|^{|\lm|} |\SSCT_n(\lm)|.
\end{equation}
Then \eqref{eq:LHT-SSCT} follows from \eqref{eq:3}, \eqref{eq:4} with $\vec x = (1^m)$.

The remainder of this paper is organized as follows. In Section~\ref{sec:proof-theor-refthm:1} we give a simple proof of Theorem~\ref{thm:1} using a Jacobi--Trudi type determinant identity. We also prove Proposition~\ref{prop:KS}. In Section~\ref{sec:proof} we prove Theorem~\ref{thm:main} also using a Jacobi--Trudi identity. 
The main tool of Sections~\ref{sec:proof-theor-refthm:1} and \ref{sec:proof} is to transform the tableaux into some system of  non-intersecting paths on a planar graph and use the Lindstr\"om--Gessel--Viennot lemma \cite{GesselViennot}. 
In Section~\ref{sec:bijective-proof-main} we give a bijective proof of Theorem~\ref{thm:main}. 
In Section~\ref{sec:conn-betw-ssct} we find a connection of our bijection with the bijections due to Novelli, Pak, and Stoyanovskii \cite{NPS} and Krattenthaler \cite{Kratt1999}. Finally, in Section~\ref{sec:final} we provide some open problems.

{\bf Acknowledgements.} The authors want to thank the University of California, Berkeley, where this
work was started and BIRS (Banff Canada), where this work was completed. We want to thank personally
the Director of BIRS that accepted to extend our stay at BIRS after the workshop ``Asymptotic
Algebraic Combinatorics" in March 2019.  Both authors would like to thank Curtis Greene and Carla
Savage for their precious comments and advice during the elaboration of this paper, Brendon Rhoades,
Travis Scrimshaw and U-Keun Song for helpful discussions, and the anonymous referees for their
careful reading and helpful comments.  J.S.K. was supported by NRF grants
\#2019R1F1A1059081 and \#2016R1A5A1008055.

\section{Jacobi--Trudi identity}
\label{sec:proof-theor-refthm:1}

In this section we interpret an $n$-lecture hall tableau as non-intersecting lattice paths and give a Jacobi--Trudi type identity for the generating function $L_\lm^n(\vec x)$ for  $n$-lecture hall tableaux of a given shape. We then prove Theorem~\ref{thm:1} and Proposition~\ref{prop:KS}. 

The paths we consider are on an infinite directed graph embedded in the plane $\mathbb R^2$ defined as follows.

\begin{defn}
The \emph{lecture hall graph} $\GG=(V,E)$ is a directed graph on the vertex set
\[
V = \left\{\left(i,\frac{j}{i+1}\right): i,j\in\NN \right\},
\]
whose edge set $E$ consists of
\begin{itemize}
\item \emph{(nearly) horizontal edges} from $(i,k+\frac{r}{i+1})$ to  $(i+1,k+\frac{r}{i+2})$  for $i,k\in\NN$ and $0\le r\le i$, and
\item \emph{vertical edges} from $(i,k+\frac{r+1}{i+1})$ to $(i,k+\frac{r}{i+1})$ for $i,k\in\NN$ and $0\le r\le i$.
\end{itemize}
\end{defn}

See Figure~\ref{fig:LHL} for an example of the lecture hall graph $\GG$.  We note that in \cite{LHT} a slightly different graph is used to describe lecture hall tableaux, however, both graphs can equally be used for this purpose.

\begin{figure}
  \centering
  \begin{tikzpicture}
\LHLL{9}3    
  \end{tikzpicture}
  \caption{The lecture hall graph $\GG$.}
  \label{fig:LHL}
\end{figure}

We now consider (directed) paths in the lecture hall graph. 
 A \emph{path} in $\GG$ is a (possibly infinite) sequence $P$ of vertices of $\GG$ such that
$(u,v)$ is a directed edge of $\GG$ for every two consecutive elements $u$ and $v$ in $P$. 
If $P$ is a finite path $(u_\ell,u_{\ell-1},\dots,u_1)$, we say that $P$ is a path from $u_\ell$ to $u_1$.
If $P$ is an infinite path $(\ldots,u_3,u_2,u_1)$ for $u_i=(a_i,b_i)$, $i\ge1$, such that 
$\lim_{i\to \infty} a_i=a$, we say that $P$ is a path from $(a,\infty)$ to $(a_1,b_1)$. 

From now on every path considered in this section will be either a finite path
or an infinite path in $\GG$ satisfying the above limit condition.

We define the \emph{weight} $\wt(P)$ of a path $P$ to be the product of its edge weights, where the weight of the horizontal edge from $(i,k+\frac{r}{i+1})$ to  $(i+1,k+\frac{r}{i+2})$ is defined to be $x_k$ and the weight of every vertical edge is defined to be $1$. 
A sequence $(P_1,\dots,P_k)$ of paths is said to be 
 \emph{non-intersecting} if they do not share any vertex.
The weight of the system $(P_1,\dots,P_k)$ of paths 
is defined to be the product $\prod_{i=1}^k {\rm wt}(P_i)$ of the weights of the paths. 
The following lemma gives a way of understanding lecture hall tableaux as non-intersecting paths. 

\begin{lem}\label{lem:LP}
Let $\lambda$ and $\mu$ be partitions satisfying $\mu\subset\lambda$ and $\ell=\ell(\lambda)\le n$. 
Then there exists a bijection between $\LHT_{n}(\lm)$ and the set of non-intersecting paths $(P_1,\ldots ,P_\ell)$
where $P_i$ is a path from $(\mu_i+n-i,\infty)$ to $(\lambda_i+n-i,0)$. 
This bijection is such that if $L\in \LHT_{n}(\lm)$ corresponds to $(P_1,\ldots ,P_\ell)$ then
$$
\vec x^{\lfloor L\rfloor}=\prod_{i=1}^\ell {\rm wt}(P_i).
$$
\end{lem}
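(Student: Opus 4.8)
The plan is to construct the bijection cell-by-cell, translating the entry $L(i,j)$ of a lecture hall tableau directly into a step of the path $P_i$, and then to verify the non-intersecting condition separately. First I would set up coordinates: the path $P_i$ lives ``in the $i$th row strip'' of the lecture hall graph $\GG$, and its horizontal edges occur at $x$-coordinates $n-i+1, n-i+2, \dots, \lambda_i+n-i$, one for each cell $(i,j)$ with $\mu_i < j \le \lambda_i$. The natural guess is that the horizontal edge of $P_i$ at $x$-coordinate $n-i+j-1 \to n-i+j$ should be placed at height determined by $\flr{L(i,j)/(n-i+j)}$, namely the edge from $(n-i+j-1, \flr{L(i,j)/(n+c(i,j))} + \frac{r}{n-i+j})$ to $(n-i+j, \cdots)$ for the appropriate residue $r$; the vertical edges of $P_i$ simply fill in the gaps so that consecutive horizontal edges connect up, and the infinite tail of $P_i$ runs up the vertical line $x = \mu_i + n - i$ to $(\mu_i+n-i,\infty)$. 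The key observation making this well-defined is that on the strip between $x=k$ and $x=k+1$ the graph $\GG$ has exactly the heights $\{q + \frac{r}{k+1} : q\in\NN, 0\le r\le k\}$ available on the left end, so recording $L(i,j)$ with $0\le L(i,j)$ and the ``content'' normalization $n+c(i,j)=n-i+j$ exactly matches the denominators $i+1$ appearing in the definition of $V$, after the shift $i\mapsto k = n-i+j-1$.

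Next I would check the two defining inequalities of a lecture hall tableau correspond exactly to the path being (a) a legitimate directed path in $\GG$ and (b) the system being non-intersecting. The row inequality $\frac{L(i,j)}{n+c(i,j)} \ge \frac{L(i,j+1)}{n+c(i,j+1)}$ should translate into the statement that as we move right along $P_i$ the height is weakly decreasing, which is exactly what the vertical edges of $\GG$ (which only go downward) allow; conversely any path in $\GG$ has weakly decreasing height, and reading off the heights of its horizontal edges produces a sequence satisfying this inequality. The column inequality $\frac{L(i,j)}{n+c(i,j)} > \frac{L(i+1,j)}{n+c(i+1,j)}$ should be shown equivalent to $P_i$ and $P_{i+1}$ being vertex-disjoint: the horizontal edge of $P_i$ coming from cell $(i,j)$ sits at $x$-coordinate $n-i+j-1$, and the horizontal edge of $P_{i+1}$ from cell $(i+1,j)$ sits at $x$-coordinate $n-i+j-2$, so they are one column apart, and one must argue that the strict inequality on the normalized values is precisely what prevents the two paths from touching at a shared vertex on the intervening vertical line. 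Here I expect to use that the starting points $\mu_i+n-i$ are strictly decreasing in $i$ and the endpoints $\lambda_i+n-i$ are strictly decreasing in $i$ (since $\lambda,\mu$ are partitions and $\ell(\lambda)\le n$), which gives the ``boundary'' non-intersecting conditions for free, and reduces the interior check to the column inequality cell by cell.

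Then the weight statement is essentially bookkeeping: the horizontal edge of $P_i$ arising from cell $(i,j)$ has, by construction, height with integer part $\flr{L(i,j)/(n-i+j)}$, and by the weight convention on $\GG$ a horizontal edge at integer-part height $k$ (more precisely the edge from $(i', k + \frac{r}{i'+1})$ to $(i'+1, k+\frac{r}{i'+2})$) carries weight $x_k$; hence the product of edge weights of $P_i$ is $\prod_{j} x_{\flr{L(i,j)/(n-i+j)}}$ over the cells in row $i$ of $\lm$, and multiplying over $i$ gives $\prod_{(i,j)\in\lm} x_{\flr{L(i,j)}} = \vec x^{\flr L}$. I would present the map $L\mapsto(P_1,\dots,P_\ell)$ explicitly, note that the inverse is obtained by reading off, for each $i$, the heights of the horizontal edges of $P_i$ in left-to-right order and multiplying the $\lfloor\cdot\rfloor$-part back by $n-i+j$ together with the fractional residue to recover $L(i,j)$, and observe that the two constructions are mutually inverse by inspection.

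The main obstacle I anticipate is getting the bijection between ``real number $\frac{L(i,j)}{n-i+j}$'' and ``vertex of $\GG$ in the appropriate strip'' exactly right, including the interaction of the integer part with the fractional residue $r/(i+1)$: one has to verify that every nonnegative integer value of $L(i,j)$ in the range consistent with being a genuine entry corresponds to a unique admissible vertex height, that the residue $r$ is forced (so no information is lost), and that the resulting local picture glues correctly across adjacent columns so that the vertical edges needed to connect consecutive horizontal edges actually exist in $\GG$. The strict-versus-weak distinction between the row and column conditions, and how it matches the asymmetry in $\GG$ between downward vertical edges (present) and the geometry forcing paths in adjacent strips apart, is the delicate point; once that correspondence is pinned down the rest is routine verification, and it is plausible the authors simply exhibit a small example (as in Figure~\ref{fig:LHT1}) and leave the straightforward checks to the reader.
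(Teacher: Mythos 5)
Your proposal is correct and takes essentially the same approach as the paper: the paper's (very terse) proof, citing \cite{LHT}, defines the bijection by letting $L(i,j)$ be the number of regions under the $(j-\mu_i)$th horizontal edge of $P_i$, which is exactly your placement of that edge with left endpoint at height $\lfloor L(i,j)/(n-i+j)\rfloor + r/(n-i+j)$, i.e.\ at height $L(i,j)/(n+c(i,j))$. Your sketch spells out the row-inequality, column-inequality/non-intersection, and weight checks in more detail than the paper does, but the underlying construction is identical.
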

\begin{proof}
As in \cite{LHT} the bijection between lecture hall tableaux $L$ and non-intersecting paths $(P_1,\ldots ,P_\ell)$ is constructed
by counting the number of regions under each horizontal edge of each path. Namely, 
$L(i,j)$ is given by the number of regions under the $(j-\mu_i)^{th}$ horizontal edge of $P_i$.
Then the weight of the edge is $x_{\lfloor L(i,j)/(n-i+j)\rfloor}$, so the bijection satisfies the desired property.
\end{proof}

\begin{figure}
  \centering
\begin{tikzpicture}
\LHLL{10}5
\draw [red,very thick](1,5.5)--(1,2)--(2,2)--(2,4/3)--(3,5/4)--(3,0)--(4,0);
\node at (1.5,11/5) {$x_2$};
\node at (2.5,7.2/5) {$x_1$};
\node at (3.5,1/5) {$x_0$};
\draw [red,very thick](2,5.5)--(2,8/3)--(3,10/4)--(3,9/4)--(4,11/5)--(4,2/5)--(5,2/6)--(5,0)--(6,0);
\node at (2.5,16.8/6) {$x_2$};
\node at (3.5,14.2/6) {$x_2$};
\node at (4.5,4/8) {$x_0$};
\node at (5.5,1/5) {$x_0$};
\draw [red,very thick](4,5.5)--(4,16/5)--(5,19/6)--(5,3)--(6,3)--(7,3)--(7,10/8)--(8,11/9)--(8,4/9)--(9,4/10)--(9,0);
\node at (4.5,20/6) {$x_3$};
\node at (5.5,19/6) {$x_3$};
\node at (6.5,19/6) {$x_3$};
\node at (7.5,11/8) {$x_1$};
\node at (8.5,2/3) {$x_0$};
\draw [red,very thick](7,5.5)--(7,25/8)--(8,28/9)--(8,25/9)--(9,27/10)--(9,21/10)--(10,23/11)--(10,0);
\node at (7.5,26.5/8) {$x_3$};
\node at (8.5,26/9) {$x_2$};
\node at (9.5,23/10) {$x_2$};
\end{tikzpicture}
\caption{Non-intersecting paths in $\GG$. For each horizontal edge, its weight is shown above it.}
\label{fig:NLP}
\end{figure}

Figure~\ref{fig:NLP} shows the non-intersecting paths in $\GG$ corresponding to the lecture hall tableau $L\in\LHT_{n}(\lm)$ in Figure~\ref{fig:LHT1}
for $n=5$, $\lambda=(6,6,4,3)$ and $\mu=(3,1)$.
The paths in Figure~\ref{fig:NLP} have weight $x_0^4x_1^2x_2^5x_3^4$, which is equal to ${\bf x}^{\lfloor L\rfloor}$. 
The entries of $\lfloor L\rfloor$ can be seen on the right of Figure~\ref{fig:LHT1}.

Recall that $\vec x=(x_0,x_1,\dots )$ and that $|\vec x|=x_0+x_1+\cdots $. 
The following proposition is a Jacobi--Trudi type identity for $L_\lm^n(\vec x)$.
\begin{prop}\label{prop:JT}
Let $\lambda$ and $\mu$ be partitions satisfying $\mu\subset\lambda$ and $\ell=\ell(\lambda)\le n$. 
Then we have
\[
L_\lm^n(\vec x)  = \det
\left(L^{\mu_j+n-j+1}_{(\lambda_i-\mu_j-i+j)}(\vec x) \right)_{1\le i,j\le\ell}.
\]
\label{NILP}
\end{prop}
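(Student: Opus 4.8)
The plan is to feed the non-intersecting path model of Lemma~\ref{lem:LP} into the Lindström--Gessel--Viennot lemma \cite{GesselViennot} and then to recognize each entry of the resulting determinant as a one--row lecture hall generating function.

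First I would use Lemma~\ref{lem:LP} to rewrite
\[
L_\lm^n(\vec x)=\sum_{(P_1,\dots,P_\ell)}\prod_{i=1}^\ell\wt(P_i),
\]
where the sum runs over all non-intersecting families in which $P_i$ is a path in $\GG$ from $u_i:=(\mu_i+n-i,\infty)$ to $v_i:=(\lambda_i+n-i,0)$. Every directed edge of $\GG$ either fixes the first coordinate or increases it by one, and never increases the second coordinate, so each path is weakly monotone---rightward and downward---and, since two horizontal edges of $\GG$ in the same vertical strip never cross transversally, two paths that ``cross'' must share a vertex. Because $\mu$ is weakly decreasing and $i\mapsto n-i$ is strictly decreasing, the source first coordinates $\mu_1+n-1>\cdots>\mu_\ell+n-\ell$ are strictly decreasing, and likewise the sink first coordinates; hence any inversion of a permutation $\sigma$ would force a path $u_i\to v_{\sigma(i)}$ that starts to the right of another path and ends to its left, so the two intersect. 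Thus only $\sigma=\mathrm{id}$ admits a non-intersecting family, and the LGV lemma yields
\[
L_\lm^n(\vec x)=\det\big(M_{ij}\big)_{1\le i,j\le\ell},
\]
where, after transposing the matrix (which does not change the determinant), $M_{ij}:=\sum_P\wt(P)$ is the sum over all single paths $P$ in $\GG$ from $u_j=(\mu_j+n-j,\infty)$ to $v_i=(\lambda_i+n-i,0)$.

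It then remains to identify $M_{ij}$ with $L^{\mu_j+n-j+1}_{(\lambda_i-\mu_j-i+j)}(\vec x)$. Put $N_j:=\mu_j+n-j+1$ (which is $\ge1$ since $1\le j\le\ell\le n$) and $k_{ij}:=\lambda_i-\mu_j-i+j$. If $k_{ij}\ge0$, I would apply Lemma~\ref{lem:LP} with $\ell=1$ to the one--row shape $(k_{ij})/\emptyset$ and the parameter $N_j$ in place of $n$; it provides a weight-preserving bijection between $\LHT_{N_j}((k_{ij}))$ and the single paths in $\GG$ from $(N_j-1,\infty)=(\mu_j+n-j,\infty)$ to $(k_{ij}+N_j-1,0)=(\lambda_i+n-i,0)$---precisely the paths counted by $M_{ij}$. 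Summing the weight identity $\vec x^{\flr{L}}=\wt(P)$ over this bijection gives $M_{ij}=L^{N_j}_{(k_{ij})}(\vec x)$; the degenerate value $k_{ij}=0$ gives $M_{ij}=1=L^{N_j}_{\emptyset}(\vec x)$, corresponding to the unique all-vertical path. If $k_{ij}<0$, then $\lambda_i+n-i<\mu_j+n-j$ and no (rightward) path in $\GG$ joins $u_j$ to $v_i$, so $M_{ij}=0=L^{N_j}_{(k_{ij})}(\vec x)$ under the standard convention that a one--row shape of negative size contributes nothing. Substituting into the determinant gives the stated identity.

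The step I expect to be the main obstacle is the careful justification of the LGV hypotheses in this slightly nonstandard setting: the sources lie at height $\infty$, so one must invoke the lemma in the form valid for the semi-infinite paths fixed earlier in this section, and one must check rigorously that the strictly decreasing arrangement of the source and sink first coordinates, combined with the rightward/downward monotonicity and planarity of paths in $\GG$, really forces $\sigma=\mathrm{id}$. Everything else is bookkeeping with the shifts $N_j,k_{ij}$ and the boundary cases $k_{ij}\le0$.
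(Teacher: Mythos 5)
Your proposal is correct and follows essentially the same route as the paper: apply the Lindstr\"om--Gessel--Viennot lemma to the non-intersecting path model of Lemma~\ref{lem:LP} with sources $(\mu_j+n-j,\infty)$ and sinks $(\lambda_i+n-i,0)$, then recognize each matrix entry as the one-row generating function $L^{\mu_j+n-j+1}_{(\lambda_i-\mu_j-i+j)}(\vec x)$ via the single-path case of the same lemma. The paper's proof is terser and omits the verification that only the identity permutation contributes and the degenerate cases $k_{ij}\le 0$, which you spell out correctly.
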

\begin{proof}
This is a direct consequence of the Lindstr\"om--Gessel--Viennot lemma \cite{GesselViennot}, which states that the weight generating function for non-intersecting paths
from vertices $u_1,u_2,\ldots ,u_\ell$ to vertices $v_1,v_2,\ldots ,v_\ell$ of the planar graph $\GG$
is
$$
\det (P(u_j,v_i))_{1\le i,j\le \ell},$$
where 
$P(u_j,v_i)$ is the weight generating function of the paths from $u_j$ to $v_i$.
Here we choose
$u_j=(\mu_j+n-j,\infty)$, $v_i=(\lambda_i+n-i,0)$
and  therefore $P(u_j,v_i)=L^{\mu_j+n-j+1}_{(\lambda_i-\mu_j-i+j)}(\vec x)$.
Then the proposition follows from Lemma~\ref{lem:LP}. 
\end{proof}

Let us now compute the entries of the matrix of the previous proposition:

\begin{prop}\label{prop:one}
For $n,k\ge0$ we have
$$
L^{n}_{(k)}(\vec x)=|\vec x|^{k}{n+k-1\choose k}.   
$$
\end{prop}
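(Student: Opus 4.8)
The plan is to compute $L^n_{(k)}(\vec x)$ by directly enumerating the relevant paths. By Lemma~\ref{lem:LP}, $L^n_{(k)}(\vec x)$ is the weight generating function for single paths $P$ from $(n-1,\infty)$ to $(n+k-1,0)$ in the lecture hall graph $\GG$ (here $\mu=\emptyset$, $\lambda=(k)$, $\ell=1$, so $\mu_1+n-1=n-1$ and $\lambda_1+n-1=n+k-1$). Such a path has exactly $k$ horizontal edges, one leaving each column $i$ for $i=n-1,\dots,n+k-2$, and some number of vertical edges in between. The weight of a horizontal edge leaving column $i$ at height $\tfrac{j}{i+1}$ is $x_{\lfloor j/(i+1)\rfloor}$, and in fact the bijection of Lemma~\ref{lem:LP} shows that if the path corresponds to the one-row tableau $L=(L(1,1),\dots,L(1,k))$, then the product of its edge weights is $x_{a_1}\cdots x_{a_k}$ where $a_j=\lfloor L(1,1+j-1)/(n-1+j)\rfloor$ — wait, more precisely $\vec x^{\flr L}$.

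First I would make the key observation that a one-row lecture hall tableau in $\LHT_n((k))$ is simply a sequence $(L_1,\dots,L_k)$ of nonnegative integers satisfying only the horizontal inequalities $\frac{L_j}{n-1+j}\ge\frac{L_{j+1}}{n+j}$ (no column conditions since there is one row). Setting $a_j=\flr{L_j/(n-1+j)}$, the monomial $\vec x^{\flr L}$ equals $x_{a_1}x_{a_2}\cdots x_{a_k}$. I would then argue that as $L$ ranges over $\LHT_n((k))$, the pair consisting of the ``floor vector'' $(a_1,\dots,a_k)$ and the ``fractional part data'' decouples in a controlled way. Concretely, for a fixed target floor vector $(a_1,\dots,a_k)$, I would count the number of $L\in\LHT_n((k))$ with that floor vector and show the generating function factors as $|\vec x|^k\binom{n+k-1}{k}$.

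The cleanest route, though, is probably to avoid a direct combinatorial bijection on tableaux and instead count paths in $\GG$ together with their weights. I would count, over all paths $P$ from $(n-1,\infty)$ to $(n+k-1,0)$, the quantity $\sum_P \wt(P)$. Since the column-$i$ horizontal edge can start at any of the infinitely many heights allowed, but the vertical structure forces monotonicity, I would parametrize a path by the sequence of heights $h_{n-1}\ge h_n\ge \cdots$ at which it sits; summing a geometric-type series in each column collapses the $x_{a}$'s into $|\vec x| = x_0+x_1+\cdots$ in each of the $k$ columns, contributing $|\vec x|^k$, while the remaining combinatorial count of ``how the path descends'' is a lattice-path count that evaluates to $\binom{n+k-1}{k}$. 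An even slicker alternative: specialize all $x_i$ to a single variable-free count, i.e. note $L^n_{(k)}(1,1,\dots)$ counts $\SSCT_n((k))=\SSYT_n((k))$ which by the hook-content formula \eqref{eq:7} equals $\binom{n+k-1}{k}$, and separately observe by homogeneity/degree considerations that $L^n_{(k)}(\vec x)$ is $|\vec x|^k$ times a constant (because $\vec x^{\flr L}$ always has total degree $k=|\lm|$, and one can show the coefficient of every monomial of degree $k$ is the same, by a symmetry argument transporting one monomial to another).

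The main obstacle I anticipate is justifying that every degree-$k$ monomial in $\vec x$ occurs with the \emph{same} coefficient in $L^n_{(k)}(\vec x)$; this is the crux, since once it is known, comparing with $L^n_{(k)}(1,1,\dots) = |\SSYT_n((k))| = \binom{n+k-1}{k}$ finishes the proof. I expect to establish equal coefficients by exhibiting, for each target monomial $x_{c_1}^{e_1}\cdots$, a bijection with the tableaux whose floor vector is a fixed reference vector such as $(0,0,\dots,0)$: given a one-row tableau with floor vector $(0,\dots,0)$ (equivalently $0\le L_j< n-1+j$ with the lecture-hall chain condition, which is automatically satisfied), one can ``inflate'' entries to prescribe any desired floor vector while the number of choices of fractional remainder stays constant column-by-column. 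Carefully checking that the lecture-hall inequality $\frac{L_j}{n-1+j}\ge \frac{L_{j+1}}{n+j}$ is preserved under this inflation, for an arbitrary target floor vector, is the delicate point; I would handle it by processing columns left to right and noting that once $a_j\ge a_{j+1}$ fails it can be repaired, or more cleanly by using the path model where the claim becomes transparent: shifting which ``block'' of $(i+1)$ consecutive heights a horizontal edge lands in changes its $x$-label but not the set of admissible continuations. Everything else is a routine geometric-series summation and a single application of the hook-content formula \eqref{eq:7}.
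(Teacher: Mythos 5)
The central claim your argument rests on is false. If $L^{n}_{(k)}(\vec x)=|\vec x|^{k}\binom{n+k-1}{k}$, then the coefficient of the monomial $x_0^{e_0}x_1^{e_1}\cdots$ (with $\sum_i e_i=k$) is $\frac{k!}{\prod_i e_i!}\binom{n+k-1}{k}$, which depends on the monomial through the multinomial factor; the degree-$k$ monomials do \emph{not} all occur with the same coefficient. Concretely, for $n=1$, $k=2$ the tableaux are the sequences with $L_1\ge L_2/2\ge 0$: exactly one of them, $L=(0,0)$, has floor vector $(0,0)$, while two of them, $L=(1,0)$ and $L=(1,1)$, have floor vector $(1,0)$. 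So the bijection you propose onto a fixed reference floor vector such as $(0,\dots,0)$ cannot exist --- the fibers have different sizes. Moreover, the comparison point you want to use, $L^{n}_{(k)}(1,1,\dots)$, is a divergent sum (the entries of a one-row lecture hall tableau are unbounded, so infinitely many distinct monomials occur); the specialization that counts $\SSCT_n((k))=\SSYT_n((k))$ is $L^{n}_{(k)}(1,0,0,\dots)$, i.e.\ $x_0=1$ and $x_i=0$ for $i\ge1$.

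Your first route (summing a geometric-type series column by column in the path model) points in the right direction but stops exactly where the work lies: the columns are not independent, since the height at which the path leaves column $i$ constrains where it may leave column $i+1$, so the sum does not factor naively into a contribution of $|\vec x|$ per column. The paper resolves this by splitting the one-row tableau at the unique index $j$ where the ratio $L(1,j)/(n+j-1)$ first drops below $1$: the suffix contributes only powers of $x_0$ and is a bounded count equal to $x_0^{k-j}\binom{n+k-1}{k-j}$, while the prefix contributes only $x_1,x_2,\dots$ and, after subtracting $n+i-1$ from its $i$th entry, is again an arbitrary one-row lecture hall tableau with the indices of all its $x$-variables shifted down by one. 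This yields the recursion $L^{n}_{(k)}(\vec x)=\sum_{j=0}^{k}L^{n+j}_{(k-j)}(x_0)\,L^{n}_{(j)}(x_1,x_2,\dots)$, which closes by induction. Some such decomposition (or an equivalent device) is needed to handle the interaction between columns; your proposal as written does not supply one, and the shortcut you offer in its place is based on a false premise.
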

\begin{proof}
Let us first recall that
\begin{equation}
  \label{eq:8}
L^{n}_{(k)}(\vec x)=\sum_L \vec x^{\lfloor L\rfloor},  
\end{equation}
where the sum is over the $n$-lecture hall tableaux $L\in\LHT_n(\lambda)$ of shape $\lambda=(k)$, i.e.,
$$
\frac{L(1,1)}{n}\ge \frac{L(1,2)}{n+1}\ge \dots \ge \frac{L(1,k)}{n+k-1}\ge 0.
$$

Consider the case $\vec x=(x_0,0,0,\ldots )$. Then the $n$-lecture hall tableaux $L$ contributing nonzero terms in \eqref{eq:8} are those satisfying
$$
1>\frac{L(1,1)}{n}\ge \frac{L(1,2)}{n+1}\ge \dots \ge \frac{L(1,k)}{n+k-1}\ge 0.
$$
It is easy to check that for $a,b,k\in\NN$, the condition $1>\frac ak\ge \frac b{k+1}$
is equivalent to $k>a\ge b$. Thus, the above condition is equivalent to
$n > L(1,1)\ge \dots \ge{L(1,k)}\ge 0$ and we have
$$
L^{n}_{(k)}(x_0):=L^{n}_{(k)}(x_0,0,0,\dots)=x_0^k {n+k-1\choose k}.
$$

Now consider the general case $\vec x=(x_0,x_1,\dots)$.  
Fix an  $n$-lecture hall tableau $L\in\LHT_n((k))$ and 
let $j$ be the index such that $\frac{L(1,j)}{n+j-1}\ge 1$ and $\frac{L(1,j+1)}{n+j}<1$.
Here we suppose that $L_{1,0}=\infty$ and $L_{1,k+1}=0$ so that the index $0\le j\le k$ is always defined.
We can decompose $L$ into two lecture hall tableaux $L'\in\LHT_n((j))$
and $L''\in\LHT_{n+j}((k-j))$ so that $L'(1,i)=L(1,i)$ and $L''(1,i)=L(1,j+i)$.
Then $L'$ and $L''$ satisfy
\begin{equation}
  \label{eq:L'}
\frac{L'(1,1)}{n}\ge \dots \ge\frac{L'(1,j)}{n+j-1}\ge 1,  
\end{equation}
\begin{equation}
  \label{eq:L''}
1> \frac{L''(1,1)}{n+j}\ge \dots \ge\frac{L''(1,k-j)}{n+k-1}\ge 0.
\end{equation}
Conversely, for any pair of $L'$ and $L''$ satisfying \eqref{eq:L'} and \eqref{eq:L''}, we obtain an $n$-lecture hall tableau $L\in\LHT_n((k))$. 
Moreover, the tableaux $L'\in\LHT_n((j))$ satisfying the condition~\eqref{eq:L'} are those
contributing nonzero terms in $L^{n}_{(j)}(0,x_1,x_{2},\ldots )$ and
the tableaux $L''\in\LHT_{n+j}((k-j))$ satisfying the condition~\eqref{eq:L''} are those
contributing nonzero terms in $L^{n+j}_{(k-j)}(x_0)$. Therefore
$$
L^{n}_{(k)}(x_0,x_1,\dots)=\sum_{j=0}^k L^{n+j}_{(k-j)}(x_0) L^{n}_{(j)}(0,x_1,x_2,\ldots).
$$

Now we notice that sequences $(L'(1,1),\ldots , L'(1,j))$ such that
\begin{equation*}
\frac{L'(1,1)}{n}\ge \dots \ge\frac{L'(1,j)}{n+j-1}\ge 1,  
\end{equation*}
are in bijection with sequences  $(U'(1,1),\ldots , U'(1,j))$ such that
\begin{equation*}
\frac{U'(1,1)}{n}\ge \dots \ge\frac{U'(1,j)}{n+j-1}\ge 0,  
\end{equation*}
by setting $U'(1,i)=L'(1,i)-n+i-1$ for $1\le i\le j$.
This implies that 
$$
\vec x^{\lfloor L'\rfloor}=\prod_{i=1}^j x_{\lfloor L'_{1,i}/(n-i+1)\rfloor}=
\prod_{i=1}^j x_{\lfloor U'_{1,i}/(n-i+1)\rfloor+1}.
$$
We get that
$$
 L^{n}_{(j)}(0,x_1,x_2,\ldots)= L^{n}_{(j)}(x_1,x_2,\ldots).
$$

Therefore
$$
L^{n}_{(k)}(x_0,x_1,\dots)=\sum_{j=0}^k L^{n+j}_{(k-j)}(x_0) L^{n}_{(j)}(x_1,x_2,\ldots).
$$
This gives $L^{n}_{(k)}(x_0,x_1,\dots)=\binom{n+k-1}{k}|\vec x|^k$ using induction.
\end{proof}

Combining the two previous propositions, we obtain the main theorem in this section.

\begin{thm}\label{thm:L}
Let $\lambda$ and $\mu$ be partitions satisfying $\mu\subset\lambda$ and $\ell=\ell(\lambda)\le n$. 
Then we have
\[
L_\lm^{n}(\vec x) = |\vec x|^{|\lm|} \det\left(\binom{\lambda_i+n-i}{\mu_j+n-j}\right)_{1\le i,j\le \ell} .
\]
\end{thm}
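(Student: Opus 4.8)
The plan is to deduce Theorem~\ref{thm:L} directly from Propositions~\ref{prop:JT} and~\ref{prop:one}. Write $M_{i,j}=L^{\mu_j+n-j+1}_{(\lambda_i-\mu_j-i+j)}(\vec x)$ for the $(i,j)$ entry of the matrix in Proposition~\ref{prop:JT}, so that $L_\lm^n(\vec x)=\det(M_{i,j})_{1\le i,j\le\ell}$. The only point that needs care is that the subscript $k:=\lambda_i-\mu_j-i+j$ can be negative; in that case $(k)$ is not a partition and, as one sees from the Lindstr\"om--Gessel--Viennot setup used to prove Proposition~\ref{prop:JT} (when $v_i=(\lambda_i+n-i,0)$ lies strictly to the left of $u_j=(\mu_j+n-j,\infty)$ there is no directed path from $u_j$ to $v_i$, since horizontal edges increase the first coordinate), one has $M_{i,j}=0$. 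I would therefore adopt the convention $L^{N}_{(k)}(\vec x)=0$ for $k<0$, which is the usual Jacobi--Trudi convention.

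First I would evaluate each entry. Since $\ell\le n$, for all $i,j\le\ell$ we have $n-i\ge0$, $n-j\ge0$, hence $\lambda_i+n-i\ge0$ and $\mu_j+n-j\ge0$, and also $\mu_j+n-j+1\ge1$. For $k=\lambda_i-\mu_j-i+j\ge0$, Proposition~\ref{prop:one} applied with parameters $\mu_j+n-j+1$ and $k$ gives, using $(\mu_j+n-j+1)+k-1=\lambda_i+n-i$,
\[
M_{i,j}=|\vec x|^{\,\lambda_i-\mu_j-i+j}\binom{\lambda_i+n-i}{\lambda_i-\mu_j-i+j}=|\vec x|^{\,\lambda_i-\mu_j-i+j}\binom{\lambda_i+n-i}{\mu_j+n-j},
\]
the last equality because $(\lambda_i+n-i)-(\lambda_i-\mu_j-i+j)=\mu_j+n-j$. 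When $k<0$ we have $M_{i,j}=0$ by the convention, and also $\binom{\lambda_i+n-i}{\mu_j+n-j}=0$ because then $\mu_j+n-j>\lambda_i+n-i\ge0$. So in all cases $M_{i,j}=|\vec x|^{\,\lambda_i-\mu_j-i+j}\binom{\lambda_i+n-i}{\mu_j+n-j}$, with the understanding that a negative power of $|\vec x|$ is multiplied by a vanishing binomial coefficient and the product is read as $0$.

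Next I would expand the determinant over permutations $\sigma$ of $\{1,\dots,\ell\}$. For a fixed $\sigma$, if some $M_{i,\sigma(i)}$ has $\lambda_i-\mu_{\sigma(i)}-i+\sigma(i)<0$, then that factor is $0$ and so is $\binom{\lambda_i+n-i}{\mu_{\sigma(i)}+n-\sigma(i)}$, so the $\sigma$-term vanishes on both sides. Otherwise every factor is a genuine monomial times a binomial coefficient, and the exponents of $|\vec x|$ add up to a quantity independent of $\sigma$:
\[
\sum_{i=1}^\ell\bigl(\lambda_i-\mu_{\sigma(i)}-i+\sigma(i)\bigr)=\sum_{i=1}^\ell\lambda_i-\sum_{j=1}^\ell\mu_j=|\lm|,
\]
since $\sum_i\sigma(i)=\sum_i i$ and $\sigma$ permutes the indices of $\mu$. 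Hence each surviving $\sigma$-term equals $\operatorname{sgn}(\sigma)\,|\vec x|^{|\lm|}\prod_i\binom{\lambda_i+n-i}{\mu_{\sigma(i)}+n-\sigma(i)}$, and summing over $\sigma$ gives
\[
L_\lm^n(\vec x)=|\vec x|^{|\lm|}\sum_{\sigma}\operatorname{sgn}(\sigma)\prod_{i=1}^\ell\binom{\lambda_i+n-i}{\mu_{\sigma(i)}+n-\sigma(i)}=|\vec x|^{|\lm|}\det\left(\binom{\lambda_i+n-i}{\mu_j+n-j}\right)_{1\le i,j\le\ell},
\]
which is exactly the asserted formula. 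The one genuinely delicate point — and the place I would be most careful in the write-up — is the bookkeeping for negative subscripts and negative exponents: rather than dividing through by $|\vec x|$ (which is not a unit), one must argue the $0=0$ matching entry-by-entry so that the factorization of $|\vec x|^{|\lm|}$ is legitimate; everything else is the mechanical substitution of Proposition~\ref{prop:one} into Proposition~\ref{prop:JT} followed by the Leibniz expansion of the determinant.
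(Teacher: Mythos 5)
Your proof is correct and follows essentially the same route as the paper: substitute Proposition~\ref{prop:one} into the Jacobi--Trudi determinant of Proposition~\ref{prop:JT} and extract the factor $|\vec x|^{|\lm|}$. The paper performs the extraction by factoring $|\vec x|^{\lambda_i-i}$ out of row $i$ and $|\vec x|^{j-\mu_j}$ out of column $j$ rather than via the Leibniz expansion, and it does not spell out the $0=0$ bookkeeping for entries with $\lambda_i-\mu_j-i+j<0$ that you handle carefully, but these are presentational differences only.
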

\begin{proof}
By Propositions~\ref{prop:JT} and \ref{prop:one}, we have
\[
L_\lm^n(\vec x)  = \det
\left(|\vec x|^{\lambda_i-i-\mu_j+j}\binom{\lambda_i+n-i}{\mu_j+n-j} \right) _{1\le i,j\le \ell}.
\]
By factoring out the factor $|\vec x|^{\lambda_i-i}$ for each row $i$
and the factor $|\vec x|^{j-\mu_j}$ for each column $j$, we obtain the theorem.
\end{proof}

Setting $\vec x=(1^m)$ in Theorem~\ref{thm:L}, we obtain Theorem \ref{thm:1}.
Since $|\SSCT_n(\lm)| =L^n_\lm(1,0,0,\dots)$, Theorem~\ref{thm:L} implies that
\begin{equation}
  \label{eq:14}
|\SSCT_n(\lm)|=\det\left(\binom{\lambda_i+n-i}{\mu_j+n-j}\right)_{1\le i,j\le \ell}.  
\end{equation}
Therefore Theorem~\ref{thm:L} is equivalent to
\begin{equation}
  \label{eq:15}
L_\lm^{n}(\vec x) = |\vec x|^{|\lm|}  |\SSCT_n(\lm)|.
\end{equation}
Since $\SSCT_n(\lambda) =\SSYT_n(\lambda)$,
by setting $\mu=\emptyset$ in \eqref{eq:15} we obtain the following corollary.

\begin{cor}\label{thm:main2}
For a partition $\lambda$ with at most $n$ parts, we have
\[
L_\lambda^n(\vec x) =  |\vec x|^{|\lambda|} s_\lambda(1^n).
\]
\end{cor}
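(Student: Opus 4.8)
The plan is to obtain this corollary as an immediate specialization of Theorem~\ref{thm:L}, which has already done all the work. Recall that Theorem~\ref{thm:L} asserts
\[
L_\lm^{n}(\vec x) = |\vec x|^{|\lm|} \det\left(\binom{\lambda_i+n-i}{\mu_j+n-j}\right)_{1\le i,j\le \ell},
\]
and, via \eqref{eq:14}, that the determinant equals $|\SSCT_n(\lm)|$, so that $L_\lm^{n}(\vec x) = |\vec x|^{|\lm|}\,|\SSCT_n(\lm)|$ (this is exactly \eqref{eq:15}). First I would set $\mu=\emptyset$ in this last identity, which gives $L_\lambda^{n}(\vec x) = |\vec x|^{|\lambda|}\,|\SSCT_n(\lambda)|$.

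Next I would invoke the identification $\SSCT_n(\lambda)=\SSYT_n(\lambda)$ recorded in the introduction, valid for straight shapes: a semistandard $n$-content tableau on a straight shape $\lambda$ is forced by the column-strict condition to satisfy $S(i,j)<n-i+j$ automatically once $S(1,j)<n-1+j$ is imposed along the first row, but in fact the cleanest route is simply to quote the already-stated equality $\SSCT_n(\lambda)=\SSYT_n(\lambda)$. Finally, combining with the elementary count $|\SSYT_n(\lambda)|=s_\lambda(1^n)$ (also recalled in the introduction, since $s_\lambda(x_0,\dots,x_{n-1})=\sum_{T\in\SSYT_n(\lambda)}\vec x^T$ evaluated at all variables equal to $1$), I conclude
\[
L_\lambda^n(\vec x) = |\vec x|^{|\lambda|}\,|\SSYT_n(\lambda)| = |\vec x|^{|\lambda|}\,s_\lambda(1^n),
\]
as claimed.

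There is essentially no obstacle here: every ingredient has been established, and the argument is a two-line chain of substitutions. If one wanted a self-contained check independent of the determinant formula, an alternative is to use \eqref{eq:3}–\eqref{eq:4}: taking $\mu=\emptyset$ there gives $L_\lambda^n(\vec x)=S^n_\lambda(|\vec x|,|\vec x|,\dots)$, and since $S^n_\lambda(\vec z)=s_\lambda(z_0,\dots,z_{n-1})$ for a straight shape, homogeneity of $s_\lambda$ of degree $|\lambda|$ yields $S^n_\lambda(|\vec x|,\dots,|\vec x|)=|\vec x|^{|\lambda|}s_\lambda(1^n)$. Either way the only thing to be mildly careful about is bookkeeping the degree $|\lambda|$ and the number of variables $n$, which matches the hypothesis $\ell(\lambda)\le n$.
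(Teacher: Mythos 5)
Your proof is correct and follows essentially the same route as the paper: specialize the identity $L_\lm^{n}(\vec x) = |\vec x|^{|\lm|}\,|\SSCT_n(\lm)|$ (equation~\eqref{eq:15}, a direct consequence of Theorem~\ref{thm:L}) to $\mu=\emptyset$, then use $\SSCT_n(\lambda)=\SSYT_n(\lambda)$ and $|\SSYT_n(\lambda)|=s_\lambda(1^n)$. The alternative check via \eqref{eq:3}--\eqref{eq:4} is a nice sanity test but is not needed, and indeed the paper does not use it here.
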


We finish this section by giving a proof of Proposition~\ref{prop:KS}.

\begin{proof}[Proof of Proposition~\ref{prop:KS} ]
The first equality is shown in \eqref{eq:14}. It remains to show that
\begin{equation}
  \label{eq:1}
\det\left(\binom{\lambda_i+n-i}{\mu_j+n-j}\right)_{1\le i,j\le \ell} = \frac{|\SYT(\lm)|}{|\lm|!} \prod_{x\in\lm} (n+c(x)).
\end{equation}
We need the following determinant formula for $|\SYT(\lm)|$ due to Aitken \cite{Aitken}, see also \cite[Corollary~7.16.3]{Stanley}:
\[
|\SYT(\lm)|=|\lm|! \det\left(\frac{1}{(\lambda_i-\mu_j-i+j)!}\right) _{1\le i,j\le \ell}.
\]
Then \eqref{eq:1} follows immediately from Aitken's formula with the identities:
\[
\det\left(\binom{\lambda_i+n-i}{\mu_j+n-j}\right)_{1\le i,j\le \ell}
=\prod_{i=1}^{\ell(\lambda)} \frac{(\lambda_i+n-i)!}{(\mu_i+n-i)!}
\det\left(\frac{1}{(\lambda_i-\mu_j-i+j)!}\right) _{1\le i,j\le \ell},
\]
and
\[
\prod_{i=1}^{\ell(\lambda)} \frac{(\lambda_i+n-i)!}{(\mu_i+n-i)!}
=\prod_{x\in\lm}(n+c(x)),
\]
which can be easily verified.
\end{proof}

\section{Proof of Theorem~\ref{thm:main} using Jacobi--Trudi identity}
\label{sec:proof}

In this section, we prove Theorem~\ref{thm:main} using a Jacobi--Trudi identity for the generating function
\begin{equation}
  \label{eq:12}
S_{\lm}^{n}(\vec y) = \sum_{T\in\SSCT_{n}(\lm)} \vec y^{T}.  
\end{equation}
To this end we introduce another infinite directed graph. We use the notation $\omega$ for the smallest infinite ordinal number, i.e, $1<2<\cdots<\omega$. 

\begin{defn}
The \emph{content graph} $\GG'$ is the directed graph $\GG'=(V',E')$ 
on the vertex set
\[
V' = \left\{\left(i,\omega+\frac{r}{i+1}\right): i\in\NN, r\in\{0,1,\dots,i+1\}\right\},
\]
whose edge set $E'$ consists of
\begin{itemize}
\item \emph{(nearly) horizontal edges} from $(i,\omega+\frac{r}{i+1})$ to  $(i+1,\omega+\frac{r}{i+2})$  for $i\in\NN$ and $0\le r\le i$, and
\item \emph{vertical edges} from $(i,\omega+\frac{r+1}{i+1})$ to $(i,\omega+\frac{r}{i+1})$ for $i\in\NN$ and $0\le r\le i$.
\end{itemize}
\end{defn}

\begin{figure}
  \centering
\begin{tikzpicture}
\wLHLL{10}
\end{tikzpicture}
\caption{The content graph $\GG'$.}
\label{fig:GG'}
\end{figure}
 
Figure~\ref{fig:GG'} shows the content graph $\GG'$. Now to any path $P'$ in $\GG'$, we associate a monomial $\wt(P')$ equal to the product of the weights of the edges of $P'$, where the weight of the horizontal edge from $(i,\omega+\frac{r}{i+1})$ to $(i+1,\omega+\frac{r}{i+2})$ is defined to be $y_r$ and the weight of every vertical edge is $1$.

The following lemma gives a way to understand a semistandard $n$-content tableau as non-intersecting paths in $\GG'$.

\begin{lem}\label{lem:S}
There is a bijection between   
$\SSCT_n(\lm)$ and the set of non-intersecting paths $(P_1,\ldots ,P_\ell)$ in $\GG'$,
where each $P_i$ starts at $u_i=(\mu_i+n-i,\omega+1)$ and ends at $v_i=(\lambda_i+n-i,\omega)$.
The correspondence between $T\in\SSCT_n(\lm)$ and $(P_1,\ldots ,P_\ell)$ is as follows. 
The number of regions under the $(j-\mu_i)^{th}$ horizontal step
of $P_i$ is the entry $T(i,j)$. In this case we have $\vec y^T=\prod_{i=1}^n {\rm wt}(P_i)$.
\end{lem}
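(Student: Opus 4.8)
The plan is to exhibit the bijection of Lemma~\ref{lem:S} explicitly, in the spirit of the proof of Lemma~\ref{lem:LP}, and then to match the defining conditions on the two sides. Given $T\in\SSCT_n(\lm)$, I would construct, for each row index $i$ with $1\le i\le\ell$, a path $P_i$ by the following ``descend-and-step'' rule: start at $u_i=(\mu_i+n-i,\omega+1)$; then for $j=\mu_i+1,\mu_i+2,\dots,\lambda_i$ in increasing order, descend along vertical edges of $\GG'$ until reaching the vertex of height $\omega+\frac{T(i,j)}{n-i+j}$ in the current column $n-i+j-1$, and take the horizontal edge into column $n-i+j$; finally descend to $v_i=(\lambda_i+n-i,\omega)$ (if row $i$ of $\lm$ is empty, $P_i$ is the all-vertical path from $u_i$ to $v_i$). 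The first step is to check this is a well-defined path in $\GG'$. The content bound $0\le T(i,j)<n-i+j$ says precisely that $\omega+\frac{T(i,j)}{n-i+j}$ is a legitimate left endpoint of a horizontal edge at column $i'=n-i+j-1$, i.e. it has the form $\omega+\frac{r}{i'+1}$ with $0\le r\le i'$, so the horizontal steps are available; and since a horizontal edge arrives at column $n-i+j$ at a vertex of the \emph{same} index $T(i,j)$, the condition that the ensuing vertical steps (down to the horizontal edge for cell $(i,j+1)$, which leaves at index $T(i,j+1)$) point downward is exactly the inequality $T(i,j)\ge T(i,j+1)$, i.e. weak decrease of $T$ along row $i$. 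The initial and final descents are legitimate for the same numerical reason.

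Next I would describe the inverse map, which is the evident one: from a vertex-disjoint family $(P_1,\dots,P_\ell)$ with the prescribed endpoints, set $T(i,j)$ equal to the number of regions of $\GG'$ lying below the $(j-\mu_i)$-th horizontal edge of $P_i$. Since the bottom boundary of $\GG'$ is at height $\omega$, this count equals the index $r$ of the left endpoint $(n-i+j-1,\omega+\frac{r}{n-i+j})$ of that edge, whence $0\le T(i,j)\le n-i+j-1$ (the content bound), and $P_i$ has exactly $\lambda_i-\mu_i$ horizontal edges, so $T$ has shape $\lm$; the two maps are patently mutually inverse. Weak decrease of $T$ along rows again amounts to the fact that between consecutive horizontal steps $P_i$ only moves down, which is automatic in $\GG'$. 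The weight identity $\vec y^{T}=\prod_i\wt(P_i)$ is then immediate, since a horizontal edge with left endpoint of index $r$ carries weight $y_r=y_{T(i,j)}$ and every vertical edge carries weight $1$.

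The substantive part of the argument is the equivalence between vertex-disjointness of $(P_1,\dots,P_\ell)$ and strict decrease of $T$ down columns. I would first isolate a \emph{nesting} fact: any two of our paths that pass through a common column and are vertex-disjoint are linearly ordered there, in the sense that one of them occupies a vertical segment lying strictly above the other's at \emph{every} common column. This follows by induction on columns from the observations that a path of $\GG'$ moves only right and down, so at each column it meets it occupies a contiguous vertical segment, arriving at the top of that segment and departing at the bottom, and that the departure index from column $c$ equals the arrival index at column $c+1$. Granting this, for $i<i'$ the paths $P_i$ and $P_{i'}$ meet along an interval of columns whose right endpoint, when this interval is nonempty, is $\lambda_{i'}+n-i'$; there $P_{i'}$ descends to height $\omega$, so $P_{i'}$ cannot be the upper path, and therefore $P_i$ lies above $P_{i'}$ at every common column. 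Reading ``$P_i$ lies above $P_{i+1}$'' at the column $c=n-i+j-1$, from which the edge of $P_i$ for a cell $(i,j)\in\lm$ departs and into which the edge of $P_{i+1}$ for $(i+1,j)\in\lm$ arrives, gives $T(i,j)>T(i+1,j)$. Conversely, assuming $T$ strictly decreases down columns, a direct computation with the index shift $j\mapsto n-i+j-1$ --- using weak decrease of $T$ along rows to pass from a cell of $P_i$ to the cell $(i,j')$ sharing a column with the relevant edge of $P_{i'}$, and strict column decrease through the rows $i,\dots,i'$ --- shows that the departure index of $P_i$ at each common column strictly exceeds the arrival index of $P_{i'}$ there, which forces vertex-disjointness. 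I expect this last equivalence --- keeping the column/cell index shifts straight and checking that all the auxiliary cells invoked genuinely lie in $\lm$ --- to be the only fiddly point; everything else is routine.

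Finally, one can alternatively deduce the lemma from Lemma~\ref{lem:LP}: under the bijection there, the subset $\LHT_{n,1}(\lm)=\SSCT_n(\lm)$ consists of those $L$ whose paths in $\GG$, after their initial descent, remain at height at most $1$ and use only horizontal edges of height below $1$; identifying $\omega$ with $0$, such paths are exactly the paths of $\GG'$, so only the edge weights --- $x_0$ versus $y_r$ --- differ, and they are matched as in the second paragraph above.
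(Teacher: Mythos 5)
Your proposal is correct and follows the same route as the paper, which simply says the lemma ``can be proved similarly to the proof of Lemma~\ref{lem:LP}'', i.e.\ via the region-counting correspondence between horizontal steps and entries. You have merely written out in full the details (well-definedness of the descend-and-step path, the inverse map, and the equivalence of vertex-disjointness with strict column decrease) that the paper leaves implicit, and your checks are accurate.
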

\begin{proof}
This can be proved similarly to the proof of Lemma~\ref{lem:LP}.
\end{proof}

For example, the non-intersecting paths on Figure~\ref{fig:tilde1} correspond to the tableau on Figure~\ref{fig:ssct}. Note that both have weight $y_0^4y_1^4y_2y_3^2y_4$.

\begin{figure}
  \centering
\begin{tikzpicture}
\wLHLL{10}
\draw [red,very thick] (0,0)--(0,0);
\draw [red,very thick] (1,1)--(1,1/2)--(2,1/3)--(2,0)--(4,0);
\draw [red,very thick](2,1)--(2,2/3)--(3,2/4)--(3,1/4)--(4,1/5)--(4,1/5)--(5,1/6)--(6,1/7)--(6,0);
\draw [red,very thick](4,1)--(4,4/5)--(5,4/6)--(5,3/6)--(6,3/7)--(6,3/7)--(7,3/8)--(7,0)--(9,0);
\node at (1.5,5/8) {$y_1$};
\node at (2.5,1/8) {$y_0$};
\node at (3.5,1/8) {$y_0$};
\node at (2.5,7/9) {$y_2$};
\node at (3.5,3/7) {$y_1$};
\node at (4.5,3/8) {$y_1$};
\node at (5.5,2.5/8) {$y_1$};
\node at (4.5,5.5/6) {$y_4$};
\node at (5.5,5.6/9) {$y_3$};
\node at (6.5,4.5/8) {$y_3$};
\node at (7.5,1.5/8) {$y_0$};
\node at (8.5,1.5/8) {$y_0$};
\end{tikzpicture}
\caption{Non-intersecting paths in $\GG'$. The weight of each horizontal edge is shown above the edge.}
\label{fig:tilde1}
\end{figure}

\begin{figure}
  \centering
\begin{tikzpicture}[scale=.6]
\cell22{4} \cell23{3} \cell24{3} \cell25{0} \cell260
\cell312 \cell321 \cell331 \cell341
\cell411 \cell420 \cell430
\draw (0,-4)--(0,-2)--(1,-2)--(1,-1)--(6,-1);
\end{tikzpicture} 
  \caption{A semistandard $n$-content tableau of shape $\lm$ with  $n=4$, $\lambda=(6,4,3)$ and $\mu=(1)$.}
  \label{fig:ssct}
\end{figure}

The following is a Jacobi--Trudi identity for $S_{\lm}^{n}(\vec x)$. 

\begin{prop}\label{prop:SJT}
Let $\lambda$ and $\mu$ be partitions satisfying $\mu\subset\lambda$ and $\ell=\ell(\lambda)\le n$. 
Then we have
\[
S_\lm^n(\vec y)=\det\left(S^{\mu_j+n-j+1}_{(\lambda_i-\mu_j-i+j)}(\vec y) \right)_{1\le i,j\le\ell}.
\]
\end{prop}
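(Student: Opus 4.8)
The plan is to mimic exactly the proof of Proposition~\ref{prop:JT}, applying the Lindstr\"om--Gessel--Viennot lemma to the content graph $\GG'$ instead of the lecture hall graph $\GG$. First I would invoke Lemma~\ref{lem:S}, which identifies $\SSCT_n(\lm)$ with the set of non-intersecting path systems $(P_1,\dots,P_\ell)$ in $\GG'$, where $P_i$ runs from $u_i=(\mu_i+n-i,\omega+1)$ to $v_i=(\lambda_i+n-i,\omega)$, in a weight-preserving way (so that $\vec y^T=\prod_i\wt(P_i)$). Since $\GG'$ is a planar acyclic directed graph and the source/sink pairs are arranged so that the only way to connect them without forced intersections is the ``identity'' matching $u_i\mapsto v_i$ (because $\mu\subset\lambda$ forces $\mu_i+n-i<\mu_j+n-j$ and $\lambda_i+n-i<\lambda_j+n-j$ for $i<j$, and the graph embeds in the plane with these endpoints in convex position), the LGV lemma gives
\[
S_\lm^n(\vec y)=\det\left(P(u_j,v_i)\right)_{1\le i,j\le\ell},
\]
where $P(u_j,v_i)$ is the weight generating function for single paths in $\GG'$ from $u_j$ to $v_i$.

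Next I would identify each matrix entry $P(u_j,v_i)$ with the one-row generating function $S^{\mu_j+n-j+1}_{(\lambda_i-\mu_j-i+j)}(\vec y)$. This is the same bookkeeping as in the proof of Proposition~\ref{prop:JT}: a single path from $(\mu_j+n-i,\omega+1)$, wait, from $(\mu_j+n-j,\omega+1)$ to $(\lambda_i+n-i,\omega)$ in $\GG'$ corresponds, via the region-counting bijection of Lemma~\ref{lem:S} applied to a one-row shape, to a semistandard $n'$-content tableau of shape $(k)$ with $n'=\mu_j+n-j+1$ the starting column index plus one and $k=(\lambda_i+n-i)-(\mu_j+n-j)=\lambda_i-\mu_j-i+j$ the number of horizontal steps. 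One has to check the content-bound condition matches: the cell $(1,j')$ of the one-row tableau, for $1\le j'\le k$, corresponds to the $j'$th horizontal edge of the path, which leaves a vertex in column $\mu_j+n-j+j'-1$, and the semistandard-with-content-bound condition for $S^{n'}_{(k)}$ is exactly $0\le T(1,j')<n'-1+j' = \mu_j+n-j+j'$, i.e.\ the bound dictated by that column. So $P(u_j,v_i)=S^{\mu_j+n-j+1}_{(\lambda_i-\mu_j-i+j)}(\vec y)$, and substituting into the determinant yields the claim. (When $\lambda_i-\mu_j-i+j<0$ the entry is $0$, consistent with the convention that there are no such paths; when it equals $0$ the entry is $1$, the empty path.)

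The main obstacle — such as it is — is justifying that the LGV lemma applies cleanly here, i.e.\ that every non-intersecting path system connecting $\{u_i\}$ to $\{v_i\}$ uses the identity permutation, so that no signed cancellation of ``bad'' terms is needed and the determinant literally counts $\SSCT_n(\lm)$. This follows because $\GG'$ is planar and the sources all lie on the line $y=\omega+1$ in increasing order of $x$-coordinate while the sinks all lie on $y=\omega$ in increasing order of $x$-coordinate, a standard configuration forcing the matching to be order-preserving; combined with the fact that $\GG'$ has no directed path from $v_i$ upward, the only compatible permutation is the identity. The rest is the routine index-matching of the previous paragraph, entirely parallel to Proposition~\ref{prop:JT}, so the proof can be stated in one or two sentences referring back to that argument and to Lemma~\ref{lem:S}.
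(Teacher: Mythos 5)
Your proposal is correct and is exactly the argument the paper intends: the paper's proof of Proposition~\ref{prop:SJT} simply says it is analogous to Proposition~\ref{prop:JT}, namely apply the Lindstr\"om--Gessel--Viennot lemma to the path systems of Lemma~\ref{lem:S} in $\GG'$ and match the single-path generating functions $P(u_j,v_i)$ with $S^{\mu_j+n-j+1}_{(\lambda_i-\mu_j-i+j)}(\vec y)$, which you verify carefully. (Only a cosmetic slip: for $i<j$ the quantities $\mu_i+n-i$ and $\lambda_i+n-i$ are strictly \emph{decreasing} in $i$, not increasing, but this does not affect the LGV argument.)
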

\begin{proof}
The proof is similar to that of Proposition~\ref{prop:JT}, hence we omit it.
\end{proof}

Since $\SSCT_n(\lambda)=\SSYT_n(\lambda)$, the definition \eqref{eq:12} of $S_{\lm}^{n}(\vec x)$ implies that
for $k\ge0$ and $n\ge1$,
\begin{equation}
  \label{eq:sh}
S^n_{(k)}(\vec y)=s_{(k)}(y_0,\dots,y_{n-1}) =h_k(y_0,\ldots ,y_{n-1}),
\end{equation}
where $h_k(y_0,\dots,y_{n-1})$ is the \emph{complete homogeneous polynomial} defined by
\begin{equation}
  \label{eq:hk}
h_k(y_0,\dots,y_{n-1}) = \sum_{0\le i_1\le\dots\le i_k\le n-1} y_{i_1}\cdots y_{i_k}.  
\end{equation}
Note that $y_0,\dots,y_{n-1}$ are the only variables that actually appear in $S^n_{(k)}(\vec y)$ even though $\vec y=(y_0,y_1,\dots)$ is an infinite sequence of variables. 
Using \eqref{eq:sh}, Proposition~\ref{prop:SJT} can be restated as
\begin{equation}
  \label{eq:SJT2}
S_\lm^n(\vec y) =\det(h_{\lambda_i-\mu_j-i+j}(y_0,\ldots, y_{\mu_j+n-j})) _{1\le i,j\le\ell}.
\end{equation}

In order to prove Theorem~\ref{thm:main} we introduce yet another graph. 

\begin{defn}
The \emph{extended lecture hall graph} $\GG^*$ is the disjoint union $\GG'\uplus \GG$ of the content graph $\GG'$ and the lecture hall graph $\GG$.
\end{defn}

\begin{figure}
  \centering
  \begin{tikzpicture}
\ELHLL{9}3    
  \end{tikzpicture}
  \caption{The extended lecture hall graph $\GG^*=\GG'\uplus\GG$.}
  \label{fig:ELHL}
\end{figure}
 
We will draw the extended lecture hall graph $\GG^*=\GG'\uplus \GG$ with $\GG'$ on top of $\GG$ as shown in Figure~\ref{fig:ELHL} so that each vertex $(i,\omega)$ of $\GG'$ can be considered as the ``limit'' of the sequence of vertices $(i,0),(i,1),(i,2),\dots$ in $\GG$.

We define an \emph{$\omega$-path} to be a pair $Q=(P',P)$ satisfying the following conditions:
\begin{itemize}
\item $P'$ and $P$ are paths in $\GG'$ and $\GG$, respectively.
\item $P'$ is a path from $(a,\omega+1)$ to $(b,\omega)$ and
$P$ is a path from $(b,\infty)$ to $(c,0)$ for some $a\le b\le c$. 
\end{itemize}
In this case we say that $Q$ is an $\omega$-path from $(a,\omega+1)$ to $(c,0)$. 
We use the weight $\wt(P)$ for a path in $\GG$ as in Section~\ref{sec:proof-theor-refthm:1}
and define the weight of an $\omega$-path $Q=(P',P)$ by $\wt(Q)=\wt(P')\wt(P)$.

We are now ready to prove Theorem \ref{thm:main}, which states that
\begin{equation}
  \label{eq:11}
S^n_{\lm}(|\vec x|+\vec y) = \sum_{\mu\subset\nu\subset\lambda} L_{\lambda/\nu}^{n}(\vec x) S^n_{\nu/\mu}(\vec y).  
\end{equation}

\begin{proof}[Proof of Theorem \ref{thm:main}]
Let $LHS$ and $RHS$ be the left hand side and the right hand side of \eqref{eq:11}, respectively. By \eqref{eq:SJT2}, we have
\begin{equation}
  \label{eq:13}
LHS =\det(h_{\lambda_i-\mu_j-i+j}(y_0+|\vec x|,\ldots ,y_{\mu_j+n-j}+|\vec x|))_{1\le i,j\le\ell}.
\end{equation}
Our strategy is to express $RHS$ also as a determinant
that agrees with the determinant in \eqref{eq:13} entry-wise.

First, observe that
\[
RHS=\sum_{\mu\subset\nu\subset\lambda} L_{\lambda/\nu}^{n}(\vec x) S^n_{\nu/\mu}(\vec y) 
=\sum_{(L,S)} \vec x^{\flr L} \vec y^S,
\]
where the sum is over all pairs $(L,S)$ of tableaux $L\in\LHT_n(\lambda/\nu)$
and $S\in\SSCT_n(\nu/\mu)$ for some partition $\nu$ with $\mu\subset\nu\subset\lambda$. 
Combining the bijections in Lemmas~\ref{lem:LP} and \ref{lem:S}, 
we obtain a bijection between the set of such pairs $(L,S)$
and the set of non-intersecting $\omega$-paths $(Q_1,\dots,Q_\ell)$ such that
$Q_i$ is an $\omega$-path from $u_i=(\mu_i+n-i,\omega+1)$ to $v_i=(\lambda_i+n-i,0)$. Furthermore, under this bijection we have
$\vec x^{\flr L} \vec y^S=\wt(Q_1)\cdots \wt(Q_\ell)$, which implies that
\[
RHS=\sum_{(L,S)} \vec x^{\flr L} \vec y^S =\sum_{(Q_1,\dots,Q_\ell)} \wt(Q_1)\cdots \wt(Q_\ell).  
\]
For example, the pair $(L,S)$ of tableaux given on Figure~\ref{fig:LS} corresponds to the non-intersecting $\omega$-paths
on Figure~\ref{fig:tilde}.

\begin{figure}
  \centering
\begin{tikzpicture}[scale=.6]
\cell14{$1$} \cell15{$25$} \cell16{$21$}
\cell22{$1$} \cell23{$0$} \cell24{$21$} \cell25{$10$} \cell26{$4$}
\cell31{$8$} \cell32{$9$} \cell33{$2$} \cell34{$0$}
\cell41{$4$} \cell42{$4$} \cell43{$0$}
\draw (0,-4)--(0,-2)--(1,-2)--(1,-1)--(3,-1)--(3,0)--(6,0);
\draw [very thick] (0,-2)--(3,-2)--(3,-1)--(4,-1)--(4,0);
\end{tikzpicture}
  \caption{A pair $(L,S)$ of tableaux $L\in\LHT_n(\lambda/\nu)$ and $S\in\SSCT_n(\nu/\mu)$ for $n=5$, $\lambda=(6,6,4,3)$, $\mu=(3,1)$, and $\nu=(4,3)$. The tableaux $L$ and $S$ are separated by the thick border.}
  \label{fig:LS}
\end{figure}

\begin{figure}
  \centering
\begin{tikzpicture}
\ELHLL{10}5
\draw [red,very thick](1,6.5)--(1,2)--(2,2)--(2,4/3)--(3,5/4)--(3,0)--(4,0);
\node at (1.5,11/5) {$x_2$};
\node at (2.5,7.2/5) {$x_1$};
\node at (3.5,1/5) {$x_0$};
\draw [red,very thick](2,6.5)--(2,8/3)--(3,10/4)--(3,9/4)--(4,11/5)--(4,2/5)--(5,2/6)--(5,0)--(6,0);
\node at (2.5,17/6) {$x_2$};
\node at (3.5,.1+14/6) {$x_2$};
\node at (4.5,4.1/8) {$x_0$};
\node at (5.5,1/5) {$x_0$};
\draw [red,very thick](4,6.5)--(4,1.5+21/5)--(5,1.5+25/6)--(5,1.5+4)--(6,1.5+4)--(6,3)--(7,3)--(7,10/8)--(8,11/9)--(8,4/9)--(9,4/10)--(9,0);
\node at (4.5,1.6+26/6) {$y_1$};
\node at (5.5,1.5+25/6) {$y_0$};
\node at (6.5,25.2/8) {$x_3$};
\node at (7.5,11/8) {$x_1$};
\node at (8.5,.1+1/2) {$x_0$};
\draw [red,very thick](7,6.5)--(7,1.5+33/8)--(8,1.5+37/9)--(8,25/9)--(9,27/10)--(9,21/10)--(10,23/11)--(10,0);
\node at (7.5,1.6+34/8) {$y_1$};
\node at (8.5,26/9) {$x_2$};
\node at (9.5,.1+22/10) {$x_2$};
\end{tikzpicture}
\caption{Non-intersecting $\omega$-paths in $\GG^*$. For each horizontal edge, its weight is shown above it.}
\label{fig:tilde}
\end{figure}

By the Lindstr\"om--Gessel--Viennot lemma, we have
\begin{equation}
  \label{eq:19}
RHS=\sum_{(Q_1,\dots,Q_\ell)} \wt(Q_1)\cdots \wt(Q_\ell)=\det(\tilde{P}(u_j,v_i))_{1\le i,j\le \ell},  
\end{equation}
where $\tilde{P}(u_j,v_i)$ is the sum of $\wt(Q)$ for all $\omega$-paths $Q$
from $u_j=(\mu_j+n-j,\omega+1)$ to $v_i=(\lambda_i+n-i,0)$. It is easy to see that
\begin{equation}
  \label{eq:18}
\tilde{P}(u_j,v_i)=\sum_{k=0}^{\lambda_i-\mu_j-i+j}
S^{\mu_j+n-j+1}_{(k)}(\vec y) L^{\mu_j+n-j+k+1}_{(\lambda_i-\mu_j-i+j-k)}(\vec x).
\end{equation}
By \eqref{eq:sh} and Proposition~\ref{prop:one}, we have
\begin{align*}
S^{\mu_j+n-j+1}_{(k)}(\vec y) &=h_{k}(y_0,\ldots ,y_{\mu_j+n-j}),\\
 L^{\mu_j+n-j+k+1}_{(\lambda_i-\mu_j-i+j-k)}(\vec x)&
=|\vec x|^{\lambda_i-\mu_j-i+j-k}\binom{\lambda_i+n-i}{\lambda_i-\mu_j-i+j-k}.
\end{align*}

Therefore, by \eqref{eq:13}, \eqref{eq:19}, \eqref{eq:18} and the above two equations,
it suffices to prove the following identity:
\begin{multline}
  \label{eq:17}
h_{\lambda_i-\mu_j-i+j}(y_0+|\vec x|,\ldots ,y_{\mu_j+n-j}+|\vec x|) \\
=\sum_{k=0}^{\lambda_i-\mu_j-i+j} h_{k}(y_0,\ldots ,y_{\mu_j+n-j})|\vec x|^{\lambda_i-\mu_j-i+j-k}\binom{\lambda_i+n-i}{\lambda_i-\mu_j-i+j-k}.
\end{multline}

Using the definition \eqref{eq:hk} of the complete homogeneous polynomial, it is not hard to see that
\begin{eqnarray*}
h_{t}(y_0+|\vec x|,\ldots ,y_{a}+|\vec x|)&=&\sum_{0\le i_1\le \dots \le i_t\le a} (y_{i_1}+|\vec x|)(y_{i_2}+|\vec x|)\dots (y_{i_t}+|\vec x|)\\
&=&\sum_{k=0}^t h_{k}(y_0,\ldots ,y_{a})h_{t-k}(|\vec x|^{a+k+1}),
\end{eqnarray*}
where $h_{t-k}(|\vec x|^{a+k+1})$ means $h_{t-k}(\overbrace{|\vec x|,\dots,|\vec x|}^{a+k+1})$.
Since $h_{t-k}(|\vec x|^{a+k+1}) =|\vec x|^{t-k}\binom{a+t}{t-k}$, we obtain \eqref{eq:17}
from the above identity by setting $a=\mu_j+n-j$ and $t=\lambda_i-\mu_j-i+j$.
The proof is now complete.
\end{proof}

\section{A bijective proof of the main theorem}
\label{sec:bijective-proof-main}

In this section we give a bijective proof of Theorem~\ref{thm:main}. We first introduce some definitions and restate the theorem accordingly.

\begin{figure}
  \centering
\begin{tikzpicture}[scale=.6]
\cell14{25} \cell15{25} \cell16{21}
\cell22{16} \cell23{18} \cell24{21} \cell25{10} \cell264
\cell318 \cell329 \cell332 \cell340
\cell414 \cell424 \cell430
\draw (0,-4)--(0,-2)--(1,-2)--(1,-1)--(3,-1)--(3,0)--(6,0);
\end{tikzpicture} \qquad \qquad
\begin{tikzpicture}[scale=.6]
\cell14{$\frac{25}8$} \cell15{$\frac{25}9$} \cell16{$\frac{21}{10}$}
\cell22{$\frac{16}5$} \cell23{$\frac{18}6$} \cell24{$\frac{21}7$} \cell25{$\frac{10}8$} \cell26{$\frac49$}
\cell31{$\frac83$} \cell32{$\frac94$} \cell33{$\frac25$} \cell34{$\frac06$}
\cell41{$\frac42$} \cell42{$\frac43$} \cell43{$\frac04$}
\draw (0,-4)--(0,-2)--(1,-2)--(1,-1)--(3,-1)--(3,0)--(6,0);
\end{tikzpicture}\qquad \qquad
\begin{tikzpicture}[scale=.6]
\cell14{$1_3$} \cell15{$7_2$} \cell16{$1_2$}
\cell22{$1_3$} \cell23{$0_3$} \cell24{$0_3$} \cell25{$2_1$} \cell26{$4_0$}
\cell31{$2_2$} \cell32{$1_2$} \cell33{$2_0$} \cell34{$0_0$}
\cell41{$0_2$} \cell42{$1_1$} \cell43{$0_0$}
\draw (0,-4)--(0,-2)--(1,-2)--(1,-1)--(3,-1)--(3,0)--(6,0);
\end{tikzpicture}
  \caption{On the left is a lecture hall tableau $L\in \LHT_n(\lm)$ for $n=5$, $\lambda=(6,6,4,3)$ and $\mu=(3,1)$. 
The diagram in the middle shows the number $L(i,j)/(n+c(i,j))$ for each entry $(i,j)\in\lm$. The diagram on the right is the corresponding marked tableau $T$, given by $T(i,j)=a_r$, where $a$ and $r$ are the unique integers satisfying $L(i,j)=r\cdot(n-i+j)+a$ and $0\le a<n-i+j$.}
  \label{fig:LHT}
\end{figure}

A \emph{marked tableau} of shape $\lm$ is a map $T:\lm\to \NN\times (\NN\cup \{\infty\})$. If $T(i,j)=(a,r)$ we say that $a$ is a \emph{value} and $r$ is a \emph{mark}. Instead of $T(i,j)=(a,r)$, we will simply write $T(i,j)=a_r$.  A \emph{marked $n$-content tableau} is a marked tableau $T$ with a condition that if $T(i,j)=a_r$, then $0\le a<n-i+j$. For a marked tableau $T$ of shape $\lm$ and a skew shape $\alpha\subset\lm$, we denote by
$T|_{\alpha}$ the restriction of $T$ to the cells in $\alpha$. 

Let $T$ be a marked tableau of shape $\lm$.
For each $(i,j)\in\lm$, let
\[
\wt^*(T(i,j))=
\begin{cases}
  x_{b}, & \mbox{if $T(i,j)=a_b$ and $b\ne\infty$,}\\
  y_{a}, & \mbox{if $T(i,j)=a_\infty$}.
\end{cases}
\]
The \emph{weight} $\wt^*(T)$ of $T$ is defined by
\[
\wt^*(T)=\prod_{(i,j)\in\lm} \wt^*(T(i,j)).
\]

Consider an $n$-lecture hall tableau $L\in\LHT_n(\lm)$.  We construct a marked tableau $T$ as follows.  For each cell $(i,j)\in\lm$, let $T(i,j)=a_r$, where $r=\flr{L(i,j)/(n+j-i)}$ and $a=L(i,j)-r\cdot(n+j-i)$. See Figure~\ref{fig:LHT}.  Clearly, $L$ can be recovered from $T$. From now on we will identify
the lecture hall tableau $L$ with the marked tableau $T$. Note that under this identification every mark of a lecture hall tableau is a nonnegative integer.

An \emph{extended $n$-lecture hall tableau} of shape $\lm$ is a marked tableau $T:\lm\to \NN\times(\NN\cup\{\infty\})$ satisfying the following conditions:
\begin{enumerate}
\item If $(i,j)\in\lm$ and $T(i,j)=a_r$, then $0\le a<n+j-i$. 
\item If $(i,j),(i,j+1)\in\lm$ and $T(i,j)=a_r$, $T(i,j+1)=b_s$, then we have either $r>s$, or
$r=s$ and $a\ge b$.
\item If $(i,j),(i+1,j)\in\lm$ and $T(i,j)=a_r$, $T(i+1,j)=b_s$, then we have either $r>s$, or
$r=s$ and $a>b$.
\end{enumerate}
We denote by $\LHT^*_n(\lm)$ the set of extended $n$-lecture hall tableaux of shape $\lm$. See Figure~\ref{fig:LHT*} for an example. 

\begin{figure}
  \centering
\begin{tikzpicture}[scale=.6]
\cell14{$1_\infty$} \cell15{$7_2$} \cell16{$1_2$}
\cell22{$1_\infty$} \cell23{$0_\infty$} \cell24{$0_3$} \cell25{$2_1$} \cell26{$4_0$}
\cell31{$2_2$} \cell32{$1_2$} \cell33{$2_0$} \cell34{$0_0$}
\cell41{$0_2$} \cell42{$1_1$} \cell43{$0_0$}
\draw (0,-4)--(0,-2)--(1,-2)--(1,-1)--(3,-1)--(3,0)--(6,0);
\draw[color=blue, line width=1] (3.5,-2.5) circle (.5);
\end{tikzpicture}
  \caption{An extended $n$-lecture hall tableau $L$ in $\LHT^*_n(\lm)$, where $n=5$, $\lambda=(6,6,4,2)$ and $\mu=(3,1)$. The weight of $L$ is
$\wt^*(L)=x_0^4x_1^2x_2^5x_3^1 y_0y_1^2$.
The tail of $L$ is indicated by the blue circle.}
  \label{fig:LHT*}
\end{figure}

A \emph{marked semistandard $n$-content tableau} 
is a marked tableau $T$ such that the tableau obtained from $T$ by deleting its marks
is a semistandard $n$-content tableau. See Figure~\ref{fig:SSCT*} for an example. We denote by $\SSCT^*_n(\lm)$ the set of marked semistandard $n$-content tableaux of shape $\lm$. 
From the definition one can easily see that
\begin{equation}
  \label{eq:2}
S_{\lm}^n(|\vec x|+\vec y) = \sum_{T\in\SSCT^*_n(\lm)} \wt^*(T).  
\end{equation}

\begin{figure}
  \centering
\begin{tikzpicture}[scale=.6]
\cell14{$6_2$} \cell15{$2_1$} \cell16{$1_\infty$}
\cell22{$2_0$} \cell23{$2_2$} \cell24{$2_0$} \cell25{$0_2$} \cell26{$0_3$}
\cell31{$2_2$} \cell32{$1_\infty$} \cell33{$1_0$} \cell34{$0_0$}
\cell41{$0_1$} \cell42{$0_2$} \cell43{$0_\infty$}
\draw (0,-4)--(0,-2)--(1,-2)--(1,-1)--(3,-1)--(3,0)--(6,0);
\draw[color=red, line width=1] (1.5,-2.5) circle (.5);
\end{tikzpicture}
  \caption{A marked semistandard $n$-content tableau $S$ in $\SSCT^*_n(\lm)$, where $n=5$, $\lambda=(6,6,4,2)$ and $\mu=(3,1)$. The weight of $S$ is
$\wt^*(S)=x_0^4x_1^2x_2^5x_3^1 y_0y_1^2$. The head of $S$ is indicated by the red cell.}
  \label{fig:SSCT*}
\end{figure}

Observe that if $T$ is an extended $n$-lecture hall tableau, then the marks are weakly decreasing in each row and each column, and for all $i\in\NN\cup\{\infty\}$ the values with mark $i$ form a semistandard $n$-content tableau. Therefore, if we restrict $T$ to the cells whose marks are not $\infty$, we obtain an $n$-lecture hall tableau, which implies that
\begin{equation}
  \label{eq:5}
\sum_{T\in\LHT^*_n(\lambda/\mu)} \wt^*(T)   =
\sum_{\nu} \left(\sum_{T\in\LHT_n(\lambda/\nu)} \vec x^{\flr T}
\sum_{T\in\SSCT_n(\nu/\mu)} \vec y^T \right)
= \sum_\nu L^n_{\lambda/\nu}(\vec x) S^n_{\nu/\mu}(\vec y).
\end{equation}
By \eqref{eq:2} and \eqref{eq:5}, Theorem~\ref{thm:main} can be restated as follows.

\begin{thm}\label{thm:main2}
We have
\[
\sum_{T\in\LHT^*_n(\lambda/\mu)} \wt^*(T)= \sum_{T\in\SSCT^*_n(\lm)} \wt^*(T).
\]
\end{thm}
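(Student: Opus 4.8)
The plan is to prove Theorem~\ref{thm:main2} by exhibiting an explicit weight-preserving bijection; logically the statement is merely the reformulation of Theorem~\ref{thm:main} provided by \eqref{eq:2} and \eqref{eq:5}, but the purpose of this section is to replace the determinantal argument of Section~\ref{sec:proof} by a bijection
\[
\Phi\colon\LHT^*_n(\lambda/\mu)\longrightarrow\SSCT^*_n(\lambda/\mu),\qquad \wt^*(\Phi(T))=\wt^*(T).
\]
Both sides consist of marked tableaux $T\colon\lm\to\NN\times(\NN\cup\{\infty\})$ with the same content bound $0\le a<n+j-i$ on a cell $(i,j)$ carrying $T(i,j)=a_r$, and with the same weight $\wt^*$; the only difference is the monotonicity requirement --- in an extended lecture hall tableau it is the pairs $(r,a)$, ordered lexicographically with the mark the more significant coordinate, that weakly decrease along rows and strictly decrease down columns, whereas in a marked semistandard $n$-content tableau it is only the values $a$ that do so, the marks being free. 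So $\Phi$ has to turn a ``pair-sorted'' filling into a ``value-sorted'' one, and because $\wt^*$ reads off the mark of each finite-mark cell and the value of each $\infty$-mark cell, $\Phi$ must preserve both the multiset of finite marks and the multiset of values sitting on $\infty$-marked cells.

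I would build $\Phi$ as an iterated jeu de taquin in the style of Krattenthaler~\cite{Kratt1999}. Fix the linear order on the cells of $\lm$ used by Krattenthaler, and process the cells of $T\in\LHT^*_n(\lm)$ one at a time in that order; at the $t$-th step the data already assigned to the first $t-1$ processed cells is value-sorted, and one performs a Krattenthaler-type slide (rearranging entries within the fixed shape $\lm$): compare the value of the $t$-th cell with those of its already-processed neighbours and, whenever there is an inversion --- a value too large relative to a neighbour across a strict drop in marks, the only place such inversions can occur --- move the offending entry into that neighbour and ripple the move onward until no inversion remains. Each elementary move of the ripple carries an entry from a cell of content $c$ into an adjacent cell of content $c'\ne c$; to respect the content bound $0\le a'<n+c'$ of the target cell the pair $a_r$ must be replaced by a new pair $a'_{r'}$ computed from the underlying lecture-hall value, and the rule for this replacement is to be chosen precisely so that it leaves $\wt^*$ unchanged. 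The inverse $\Psi$ runs the slides backwards: it processes the cells in the opposite order and reads the recorded marks as instructions telling each value how far and in which direction to move, reconstructing the pair-sorted tableau.

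Three points then have to be verified, in this order. (i) The elementary replacement rule exists, is reversible, stays inside all content bounds, and is $\wt^*$-preserving --- equivalently, it keeps the mark equal to the old mark whenever that is finite and keeps the value equal to the old value whenever the old mark is $\infty$. This is the heart of the matter, and it should reduce to an elementary arithmetic identity of the same kind as those used in Section~\ref{sec:proof-theor-refthm:1} (for instance the equivalence, under the content bounds, of $1>a/k\ge b/(k+1)$ with $k>a\ge b$, and of $a\ge b$ with $a(n+c+1)\ge b(n+c)$). (ii) After all cells have been processed the values are genuinely weakly decreasing in rows and strictly decreasing in columns, so $\Phi(T)\in\SSCT^*_n(\lm)$; this is the standard jeu de taquin invariant. (iii) $\Psi$ is well defined --- the recorded data always prescribes a legal reverse slide --- and $\Psi\circ\Phi=\mathrm{id}$, $\Phi\circ\Psi=\mathrm{id}$, which follows from the reversibility of each elementary move together with a bookkeeping of how the slide path of each step interacts with the already-processed region.

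The step I expect to be the main obstacle is pinning down the exact slide rule --- the processing order, which neighbour an inversion is resolved into, and exactly how the pair is recomputed when an entry crosses from content $c$ to content $c'$ --- so that simultaneously $\wt^*$ is preserved, no intermediate filling ever violates its cell-dependent content bound (an entry sliding towards larger content has more room, one sliding towards smaller content less, so the ripple must be organised, as in Krattenthaler's refinement of the Novelli--Pak--Stoyanovskii algorithm~\cite{NPS}, so that an entry is never forced past the bound of its current cell), and the procedure is reversible. Once the elementary move is correct on all counts, globalising it by induction on the processing order is routine, and the relation to the algorithms of~\cite{NPS} and~\cite{Kratt1999} promised for Section~\ref{sec:conn-betw-ssct} should be read off from the recorded slide-path data.
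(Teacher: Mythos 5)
What you have written is a plan rather than a proof: the three items you defer --- the exact elementary move, the processing order, and reversibility --- are the entire content of the theorem, and the one structural choice you do commit to is the wrong one. You propose to process the cells of $\lm$ in a \emph{fixed} linear order as in Krattenthaler's algorithm. With a fixed order the sorting map cannot be injective onto $\SSCT^*_n(\lm)$: that is precisely why Novelli--Pak--Stoyanovskii and Krattenthaler must output a hook tabloid alongside the sorted tableau, and here there is no room for such auxiliary data, since both sides of the identity are plain sums over $\LHT^*_n(\lm)$ and $\SSCT^*_n(\lm)$. Your remark that the inverse ``reads the recorded marks as instructions'' does not repair this: the marks are part of the output tableau, they get permuted by the slides, and they do not encode the slide paths. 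The paper's resolution is to make the order depend on the tableau itself: at step $i$ one slides the cell $u_{i-1}=\tail(T_{i-1}|_{\alpha_{i-1}})$, the rightmost cell of the unsorted region carrying the smallest mark and, among those, the smallest value (always a southeast corner of $\alpha_{i-1}$), and the inverse is well defined because the endpoint of that slide is recoverable a posteriori as $\head(T_i|_{\beta_i})$, the leftmost cell of the sorted region carrying the largest mark and largest value. Establishing this recoverability (Lemma~\ref{lem:value}(2)) is the real work of the section, and nothing in your outline addresses it.

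Your elementary move is also not the one that works. In the paper's value-jdt the sliding pair $a_r$ travels \emph{intact}; it is the displaced neighbour whose value is shifted by $\pm1$ (decreased by $1$ when it moves left, increased by $1$ when it moves up) while its mark is kept. Weight preservation is then essentially immediate, because $\wt^*$ depends only on the marks of finite-marked cells and the values of $\infty$-marked cells, and one checks both are untouched. Your rule instead recomputes ``a new pair $a'_{r'}$ from the underlying lecture-hall value'' whenever an entry changes content, and you never specify it; there is no a priori guarantee that such a rule exists which is simultaneously weight-preserving, bound-respecting and reversible, and the arithmetic equivalence you invoke (of $1>a/k\ge b/(k+1)$ with $k>a\ge b$) is used in the paper only in the determinantal argument of Section~\ref{sec:proof-theor-refthm:1}, not in the bijection.
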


We will construct a weight-preserving bijection between
$\LHT^*_n(\lambda/\nu)$ and $\SSCT^*_n(\lm)$. The basic idea is to sort the values of $L\in\LHT^*_n(\lm)$ using a variation of ``jeu de taquin'' according to a certain order of the cells in $\lm$ depending on $L$ itself. Our algorithms are inspired by those due to Krattenthaler \cite{Kratt1999}.

\begin{algorithm}[Value-jeu de taquin]\label{alg:vjdt}
The \emph{value-jdt algorithm} is described as follows. 
  \begin{description}
  \item[Notation] $\vjdt(P,u)=(Q,v)$.
  \item[Input] A pair $(P,u)$ of a marked tableau $P$ of shape $\lm$ and a cell $u\in\lm$.
  \item[Output] A pair $(Q,v)$ of a marked tableau $Q$ of shape $\lm$ and a cell $v\in\lm$.
    \begin{description}
    \item[Step 1] Set $Q=P$ and $v=u$. We call $v$ the \emph{active cell}.
    \item[Step 2] Let $(i,j)$ be the coordinate of the active cell $v$. Let $a_r=Q(i,j)$, $b_s=Q(i,j+1)$, and $c_t=Q(i+1,j)$. If $(i,j+1)\not\in\lm$ (resp.~$(i+1,j)\not\in\lm$), then set 
$b_s=(-1)_0$ (resp.~$c_t=(-1)_0$). If $a\ge b$ and $a>c$, then stop the process and return $(Q,v)$ as the output. Otherwise, there are two cases.
  \begin{itemize}
  \item If $b-1>c$, then set $Q(i,j)=(b-1)_s$ and $Q(i,j+1)=a_r$ as shown below, where the active cell $v$ is the cell containing $a_r$. Set $v=(i,j+1)$ and repeat Step~2. 
\begin{center}
\begin{tikzpicture}[scale=.7]
\dcell11{$a_r$} \dcell12{$b_s$} \dcell21{$c_t$}
\draw (0,-2)--(0,0)--(4,0);
\draw[color=blue, line width=1.5pt] (0,-1) rectangle (2,0);
\node at (6,-1) {$\rightarrow$};
\begin{scope}[shift={(8,0)}]
\dcell11{$(b-1)_s$} \dcell12{$a_r$} \dcell21{$c_t$}
\draw (0,-2)--(0,0)--(4,0);
\draw[color=blue, line width=1.5pt] (2,-1) rectangle (4,0);
\end{scope}
\end{tikzpicture} 
\end{center}
  \item If $c+1\ge b$, then set $Q(i,j)=(c+1)_t$ and $Q(i+1,j)=a_r$ as shown below, the active cell $v$ is the cell containing $a_r$. Set $v=(i+1,j)$ and repeat Step~2. 
\begin{center}
\begin{tikzpicture}[scale=.7]
\dcell11{$a_r$} \dcell12{$b_s$} \dcell21{$c_t$}
\draw (0,-2)--(0,0)--(4,0);
\draw[color=blue, line width=1.5pt] (0,-1) rectangle (2,0);
\node at (6,-1) {$\rightarrow$};
\begin{scope}[shift={(8,0)}]
\dcell11{$(c+1)_t$} \dcell12{$b_s$} \dcell21{$a_r$}
\draw (0,-2)--(0,0)--(4,0);
\draw[color=blue, line width=1.5pt] (0,-2) rectangle (2,-1);
\end{scope}
\end{tikzpicture} 
\end{center}
  \end{itemize}
    \end{description}
  \end{description}
\end{algorithm}

See Figure~\ref{fig:jdt} for an example of the value-jdt algorithm. 

\begin{figure}
  \centering
\begin{tikzpicture}[scale=.6]
\cell13{$0_5$} \cell14{$3_3$} \cell15{$2_3$} \cell16{$4_2$} \cell17{$7_1$} 
\cell22{$7_2$} \cell23{$1_2$} \cell24{$6_0$} \cell25{$6_1$} \cell26{$5_0$} \cell27{$4_0$} 
\cell31{$2_\infty$} \cell32{$5_2$} \cell33{$6_1$} \cell34{$5_1$} \cell35{$5_0$} 
\cell36{$3_1$} \cell37{$3_1$} 
\cell41{$2_3$} \cell42{$1_2$} \cell43{$4_0$} \cell44{$4_1$} \cell45{$4_0$} \cell46{$2_1$} 
\cell51{$4_0$} \cell52{$3_1$} \cell53{$2_1$} \cell54{$1_1$} \cell55{$1_0$} \cell56{$1_2$} 
\draw (0,-5)--(0,-2)--(1,-2)--(1,-1)--(2,-1)--(2,0)--(7,0);
\draw[line width=1.5] (2,-1)--(3,-1)--(3,-3)--(5,-3)--(5,-5)--(4,-5)--(4,-4)--(2,-4)--(2,-1);
\draw[color=blue, line width=1] (2.5,-1.5) circle (.5);
\node at (-1,-2.5) {$P=$};
\begin{scope}[shift={(11,0)}]
\cell13{$0_5$} \cell14{$3_3$} \cell15{$2_3$} \cell16{$4_2$} \cell17{$7_1$} 
\cell22{$7_2$} \cell23{$7_1$} \cell24{$6_0$} \cell25{$6_1$} \cell26{$5_0$} \cell27{$4_0$} 
\cell31{$2_\infty$} \cell32{$5_2$} \cell33{$5_0$} \cell34{$5_1$} \cell35{$5_0$} 
\cell36{$3_1$} \cell37{$3_1$} 
\cell41{$2_3$} \cell42{$1_2$} \cell43{$3_1$} \cell44{$3_0$} \cell45{$2_0$} \cell46{$2_1$} 
\cell51{$4_0$} \cell52{$3_1$} \cell53{$2_1$} \cell54{$1_1$} \cell55{$1_2$} \cell56{$1_2$} 
\node at (-1,-2.5) {$Q=$};
\draw[line width=1.5] (2,-1)--(3,-1)--(3,-3)--(5,-3)--(5,-5)--(4,-5)--(4,-4)--(2,-4)--(2,-1);
\draw (0,-5)--(0,-2)--(1,-2)--(1,-1)--(2,-1)--(2,0)--(7,0);
\draw[color=red, line width=1] (4.5,-4.5) circle (.5);
\end{scope}
\end{tikzpicture} 
  \caption{If $u=(2,3)$ and $v=(5,5)$, then $\vjdt(P,u)=(Q,v)$ and $\mjdt(Q,v)=(P,u)$. In each diagram the positions that the active cell visits are enclosed by the thick polygon.}
  \label{fig:jdt}
\end{figure}

\begin{algorithm}[Mark-jeu de taquin]\label{alg:mjdt}
The \emph{mark-jdt algorithm} is described as follows. 
  \begin{description}
  \item[Notation] $\mjdt(Q,v)=(P,u)$.
  \item[Input] A pair $(Q,v)$ of a marked tableau $Q$ of shape $\lm$ and a cell $v \in\lm$.
  \item[Output] A pair $(P,u)$ of a marked tableau $P$ of shape $\lm$ and a cell $u \in\lm$.
    \begin{description}
    \item[Step 1] Set $P=Q$ and $u=v$. We call $u$ the \emph{active cell}. 
    \item[Step 2] Let $(i,j)$ be the coordinate of the active cell $u$. Let $a_r=P(i,j)$, $b_s=P(i,j-1)$, and $c_t=P(i-1,j)$. If $(i,j-1)\not\in\lm$ (resp.~$(i-1,j)\not\in\lm$), then set 
$b_s=\infty_\infty$ (resp.~$c_t=\infty_\infty$). If $r\le s$ and $r\le t$, then stop the process and return $(P,u)$ as the output. Otherwise, there are two cases.
  \begin{itemize}
  \item If $t<r\le s$, or $s,t<r$ and $b\ge c-1$, then set $P(i,j)=(c-1)_t$ and $P(i-1,j)=a_r$ as shown below, where the active cell $u$ is the cell containing $a_r$. Set $u=(i-1,j)$ and repeat Step~2. 
\begin{center}
\begin{tikzpicture}[scale=.7]
\dcell12{$c_t$} \dcell22{$a_r$} \dcell21{$b_s$}
\draw (0,-2)--(0,-1)--(2,-1)--(2,0)--(4,0);
\draw[color=blue, line width=1.5pt] (2,-2) rectangle (4,-1);
\node at (6,-1) {$\rightarrow$};
\begin{scope}[shift={(8,0)}]
\dcell12{$a_r$} \dcell22{$(c-1)_t$} \dcell21{$b_s$}
\draw (0,-2)--(0,-1)--(2,-1)--(2,0)--(4,0);
\draw[color=blue, line width=1.5pt] (2,-1) rectangle (4,0);
\end{scope}
\end{tikzpicture}
\end{center}

  \item If $s<r\le t$, or $s,t<r$ and $c>b+1$, then set $P(i,j)=(b+1)_s$ and $P(i,j-1)=a_r$ as shown below, where the active cell $u$ is the cell containing $a_r$. Set $u=(i,j-1)$ and repeat Step~2. 
\begin{center}
\begin{tikzpicture}[scale=.7]
\dcell12{$c_t$} \dcell22{$a_r$} \dcell21{$b_s$}
\draw (0,-2)--(0,-1)--(2,-1)--(2,0)--(4,0);
\draw[color=blue, line width=1.5pt] (2,-2) rectangle (4,-1);
\node at (6,-1) {$\rightarrow$};
\begin{scope}[shift={(8,0)}]
\dcell12{$c_t$} \dcell22{$(b+1)_s$} \dcell21{$a_r$}
\draw (0,-2)--(0,-1)--(2,-1)--(2,0)--(4,0);
\draw[color=blue, line width=1.5pt] (0,-2) rectangle (2,-1);
\end{scope}
\end{tikzpicture} 
\end{center}
  \end{itemize}
    \end{description}
  \end{description}
\end{algorithm}

See Figure~\ref{fig:jdt} for an example of the value-jdt algorithm. 

Let $\lambda$ be a partition. An \emph{outer corner} of $\lambda$ is a cell $u\not\in\lambda$ such that $\lambda\cup\{u\}$ is a partition.  An \emph{inner corner} of $\lambda$ is a cell $u\in\lambda$ such that $\lambda\setminus\{u\}$ is a partition. For a skew shape $\lm$, a \emph{northwest corner} of $\lm$ is a cell in $\lm$ that is an outer corner of $\mu$ and a \emph{southeast corner} of $\lm$ is a cell in $\lm$ that is an inner corner of $\lambda$.
See Figure~\ref{fig:corner} for an example. 

\begin{figure}
  \centering
\begin{tikzpicture}[scale=.6]
\cell14{NW} \cell15{} \cell16{}
\cell22{NW} \cell23{} \cell24{} \cell25{} \cell26{SE}
\cell31{NW} \cell32{} \cell33{} \cell34{}
\cell41{} \cell42{} \cell43{} \cell44{SE}
\draw (0,-4)--(0,-2)--(1,-2)--(1,-1)--(3,-1)--(3,0)--(6,0);
\end{tikzpicture}
  \caption{The northwest corners are the cells with an ``NW'' and the southeast corners are the cells with an ``SE''.}
  \label{fig:corner}
\end{figure}

\begin{defn}\label{def:vs}
Let $\alpha$ be a skew shape and $L\in\LHT^*_n(\alpha)$. Suppose that $r$ is the smallest mark and $a$ is the smallest value with mark $r$ in $L$. 
Then the \emph{tail} of $L$, denoted $\tail(L)$, is defined to be the rightmost cell $(i,j)\in\alpha$ with $L(i,j)=a_r$. See Figure~\ref{fig:LHT*} for an example.
\end{defn}

Note that for distinct cells $(i,j),(i',j')\in\lm$, if $L(i,j)=L(i',j')=a_r$, then  the fact that $L$ is an element in $\LHT^*_n(\lm)$ ensures that $j\ne j'$. Thus the tail of $L\in\LHT^*_n(\lm)$ is well-defined. It is clear from the definition that
the tail of $L$ is a southeast corner of $\lm$. 

\begin{defn}\label{def:ms}
Let $\beta$ be a skew shape and $S\in\SSCT^*_n(\beta)$. Suppose that $r$ is the largest mark
and $a$ is the largest value with mark $r$ in $S$. 
Then the \emph{head} of $S$, denoted $\head(S)$, is defined to be the leftmost cell $(i,j)\in\beta$ with $S(i,j)=a_r$. See Figure~\ref{fig:SSCT*} for an example.
\end{defn}

By a similar argument as before, one can check that if $S\in\SSCT^*_n(\beta)$, then $\head(S)$ is well-defined. Note, however, that $\head(S)$ is not necessarily a (northwest or southeast) corner of $\beta$.

We are now ready to define a map sending an extended $n$-lecture hall tableau $L\in \LHT^*_n(\lm)$ to a marked semistandard $n$-content tableau $S\in \SSCT^*_n(\lm)$.  Recall from the definition that in $L$ the marks are weakly decreasing along each row and column but the values are not sorted.  In $S$, on the contrary, the values are weakly decreasing along each row and strictly decreasing along each column but the marks are not sorted.
Our approach is, therefore, to sort the values of $L$ in order to obtain $S$, and to sort the marks of $S$ in order to obtain $L$. 
The two sorting algorithms are described below. See Figure~\ref{fig:typical} for an illustration of a typical situation and Figure~\ref{fig:sort} for a concrete example of these algorithms.

\begin{figure}
  \centering
\begin{tikzpicture}[scale=.4]
\draw (0,-8)--(0,-3)--(1,-3)--(1,-2)--(3,-2)--(3,-1)--(4,-1)--(4,0)--(11,0)--(11,-4)--(9,-4)--(9,-7)--(8,-7)--(8,-8)--(0,-8);
\draw [line width=1.5] (0,-5)--(3,-5)--(3,-4)--(4,-4)--(4,-2)--(7,-2)--(7,0);
\draw[color=blue, line width=1] (3.5,-3.5) circle (.5);
\draw[color=red, line width=1] (6.5,-5.5) circle (.5);
\draw [dashed, ->, line width=1] (3.5,-3.5) -- (5.5,-3.5)--(5.5,-5.5)--(6.5,-5.5);
\node at (-2,-4) {$T_i=$};
\node at (5.5,-1) {$\alpha_i$};
\node at (9,-1) {$\beta_i$};
\node at (2,-3.5) {$u_{i-1}$};
\node at (7.5,-5.5) {$v_i$};
\end{tikzpicture} 
  \caption{A typical diagram with $T_i$, $\alpha_i$, $\beta_i$, $u_{i-1}$, and $v_i$. 
The border between $\alpha_i$ and $\beta_i$ is shown with a thick path. The blue circle represents $u_{i-1}$ and the red circle represents $v_i$.
The dashed path represents the movement of the active cell in the process of $\vjdt(T_{i-1},u_{i-1})=(T_i,v_i)$.}
  \label{fig:typical}
\end{figure}

\begin{algorithm}[Value-sorting]\label{alg:vsort}
The \emph{value-sorting algorithm} is described as follows. 
  \begin{description}
  \item[Notation] $\vsort(L)=S$.
  \item[Input] An extended $n$-lecture hall tableau $L$ of shape $\lm$.
  \item[Output] A marked semistandard $n$-content tableau $S$ of shape $\lm$.
    \begin{description}
    \item[Step 1] Set $T_0=L$, $\alpha_0=\lm$, $\beta_0=\emptyset$, and $u_0=\tail(T_0)$. 
    \item[Step 2] For $i=1,2,\dots,|\lm|$, define $\alpha_i$, $\beta_i$, $T_i$, $u_i$, and $v_i$ recursively by 
      \begin{align*}
(T_i,v_i)&=\vjdt(T_{i-1},u_{i-1}),\\
\alpha_i & = \alpha_{i-1}\setminus\{u_{i-1}\},\\
\beta_i & = \beta_{i-1}\cup\{u_{i-1}\},\\
u_i &= \tail(T_i|_{\alpha_i}).
      \end{align*}
    \item[Step 3] Return  $S=T_{|\lm|}$ as the output. 
    \end{description}
  \end{description}
\end{algorithm}

\begin{figure}
  \centering
\begin{tikzpicture}[scale=.6]
\node at (-1,-1.5) {$L=$};
\cell12{$1_\infty$} \cell13{$3_1$} \cell14{$4_0$}
\cell21{$3_1$} \cell22{$2_1$} \cell23{$2_1$}
\cell31{$4_0$} 
\draw (0,-3)--(0,-1)--(1,-1)--(1,0)--(4,0);
\draw[line width=1.5] (0,-3)--(1,-3)--(1,-2)--(2,-2)--(3,-2)--(3,-1)--(4,-1)--(4,0);
\draw[color=blue, line width=1] (3.5,-.5) circle (.5);
\node at (5,-1.5) {$\rightarrow$};
\begin{scope}[shift={(6,0)}]
\cell12{$1_\infty$} \cell13{$3_1$} \cell14{$4_0$}
\cell21{$3_1$} \cell22{$2_1$} \cell23{$2_1$}
\cell31{$4_0$} 
\draw (0,-3)--(0,-1)--(1,-1)--(1,0)--(4,0);
\draw[line width=1.5] (0,-3)--(1,-3)--(1,-2)--(2,-2)--(3,-2)--(3,-1)--(3,0)--(4,0);
\draw[color=red, line width=1] (3.5,-.5) circle (.5);
\draw[color=blue, line width=1] (.5,-2.5) circle (.5);
\node at (5,-1.5) {$\rightarrow$};
\end{scope}
\begin{scope}[shift={(12,0)}]
\cell12{$1_\infty$} \cell13{$3_1$} \cell14{$4_0$}
\cell21{$3_1$} \cell22{$2_1$} \cell23{$2_1$}
\cell31{$4_0$} 
\draw (0,-3)--(0,-1)--(1,-1)--(1,0)--(4,0);
\draw[line width=1.5] (0,-3)--(0,-2)--(1,-2)--(2,-2)--(3,-2)--(3,-1)--(3,0)--(4,0);
\draw[color=blue, line width=1] (2.5,-1.5) circle (.5);
\draw[color=red, line width=1] (.5,-2.5) circle (.5);
\node at (5,-1.5) {$\rightarrow$};
\end{scope}
\begin{scope}[shift={(18,0)}]
\cell12{$1_\infty$} \cell13{$3_1$} \cell14{$4_0$}
\cell21{$3_1$} \cell22{$2_1$} \cell23{$2_1$}
\cell31{$4_0$} 
\draw (0,-3)--(0,-1)--(1,-1)--(1,0)--(4,0);
\draw[line width=1.5] (0,-3)--(0,-2)--(1,-2)--(2,-2)--(2,-1)--(3,-1)--(3,0)--(4,0);
\draw[color=red, line width=1] (2.5,-1.5) circle (.5);
\draw[color=blue, line width=1] (1.5,-1.5) circle (.5);
\node at (2,-4) {$\downarrow$};
\end{scope}
\begin{scope}[shift={(18,-5)}]
\cell12{$1_\infty$} \cell13{$3_1$} \cell14{$4_0$}
\cell21{$3_1$} \cell22{$2_1$} \cell23{$2_1$}
\cell31{$4_0$} 
\draw (0,-3)--(0,-1)--(1,-1)--(1,0)--(4,0);
\draw[line width=1.5] (0,-3)--(0,-2)--(1,-2)--(1,-1)--(2,-1)--(3,-1)--(3,0)--(4,0);
\draw[color=red, line width=1] (1.5,-1.5) circle (.5);
\draw[color=blue, line width=1] (2.5,-.5) circle (.5);
\node at (-1,-1.5) {$\leftarrow$};
\end{scope}
\begin{scope}[shift={(12,-5)}]
\cell12{$1_\infty$} \cell13{$3_0$} \cell14{$3_1$}
\cell21{$3_1$} \cell22{$2_1$} \cell23{$2_1$}
\cell31{$4_0$} 
\draw (0,-3)--(0,-1)--(1,-1)--(1,0)--(4,0);
\draw[line width=1.5] (0,-3)--(0,-2)--(1,-2)--(1,-1)--(2,-1)--(2,0)--(3,0)--(4,0);
\draw[color=red, line width=1] (3.5,-.5) circle (.5);
\draw[color=blue, line width=1] (.5,-1.5) circle (.5);
\node at (-1,-1.5) {$\leftarrow$};
\end{scope}
\begin{scope}[shift={(6,-5)}]
\cell12{$1_\infty$} \cell13{$3_0$} \cell14{$3_1$}
\cell21{$5_0$} \cell22{$2_1$} \cell23{$2_1$}
\cell31{$3_1$} 
\draw (0,-3)--(0,-1)--(1,-1)--(1,0)--(4,0);
\draw[line width=1.5] (0,-3)--(0,-2)--(0,-1)--(1,-1)--(2,-1)--(2,0)--(3,0)--(4,0);
\draw[color=red, line width=1] (.5,-2.5) circle (.5);
\draw[color=blue, line width=1] (1.5,-.5) circle (.5);
\node at (-1,-1.5) {$\leftarrow$};
\end{scope}
\begin{scope}[shift={(0,-5)}]
\cell12{$3_1$} \cell13{$3_0$} \cell14{$3_1$}
\cell21{$5_0$} \cell22{$1_1$} \cell23{$1_\infty$}
\cell31{$3_1$} 
\draw (0,-3)--(0,-1)--(1,-1)--(1,0)--(4,0);
\draw[line width=1.5] (0,-3)--(0,-2)--(0,-1)--(1,-1)--(1,0)--(2,0)--(3,0)--(4,0);
\draw[color=red, line width=1] (2.5,-1.5) circle (.5);
\node at (-1,-1.5) {$S=$};
\end{scope}
\end{tikzpicture} 
  \caption{The value-sorting algorithm applied to
$L\in\LHT^*_n(\lm)$ returns $S\in\SSCT^*_n(\lm)$, where $n=7$, $\lambda=(4,3,1)$ and $\mu=(1)$. 
The mark-sorting algorithm is the reverse process. Each diagram represents $T_i$. The border between $\alpha_i$ and $\beta_i$ is drawn by a thick path. The blue circle indicates $u_i=\tail(T_i|_{\alpha_i})$  and the red circle indicates $v_i=\head(T_i|_{\beta_i})$.}
  \label{fig:sort}
\end{figure}

\begin{algorithm}[Mark-sorting]\label{alg:msort}
The \emph{mark-sorting algorithm} is described as follows. 
  \begin{description}
  \item[Notation] $\msort(S)=L$.
  \item[Input] A marked semistandard $n$-content tableau $S$ of shape $\lm$. 
  \item[Output] An extended $n$-lecture hall tableau $L$ of shape $\lm$.
    \begin{description}
    \item[Step 1] Set $T_{|\lm|}=S$, $\alpha_{|\lm|}=\emptyset$, $\beta_{|\lm|}=\lm$, and $v_{|\lm|}=\head(T_{|\lm|})$. 
    \item[Step 2] For $i=|\lm|-1,|\lm|-2,\dots,0$, define $\alpha_i$, $\beta_i$, $T_i$, $u_i$, and $v_i$ recursively by 
      \begin{align*}
(T_i,u_i)&=\mjdt(T_{i+1},v_{i+1}),\\
\alpha_i & = \alpha_{i+1}\cup\{u_{i+1}\},\\
\beta_i & = \beta_{i+1}\setminus\{u_{i+1}\},\\
v_i &= \head(T_i|_{\beta_i}).
      \end{align*}
    \item[Step 3] Return  $L=T_{0}$ as the output. 
    \end{description}
  \end{description}
\end{algorithm}

In order to show that the above algorithms are inverse to each other, we need the following two lemmas.

\begin{lem}\label{lem:value}
  Let $L\in\LHT^*_n(\lm)$.  Suppose that $\alpha_i$, $\beta_i$, $T_i$, $u_i$, and $v_i$ are given as in Algorithm~\ref{alg:vsort}. Then,
for each $i=1,2,\dots,|\lm|$, the following properties hold.
\begin{enumerate}
\item $T_i|_{\alpha_i}\in\LHT^*_n(\alpha_i)$
and $T_i|_{\beta_i}\in\SSCT^*_n(\beta_i)$. 
In particular, $T_{|\lm|}\in\SSCT^*_n(\lm)$. 
\item $\head(T_i|_{\beta_i})=v_i$.
\item $\mjdt(T_i,v_i)=(T_{i-1},u_{i-1})$. 
\end{enumerate}
\end{lem}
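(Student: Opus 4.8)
The plan is to prove the three statements simultaneously by induction on $i$. The base case $i=0$ records only that $T_0|_{\alpha_0}=L\in\LHT^*_n(\lm)$ and $T_0|_{\beta_0}\in\SSCT^*_n(\emptyset)$, true by hypothesis, together with $u_0=\tail(T_0)$, true by definition; all the content is in the inductive step, which is a close analysis of the single call $(T_i,v_i)=\vjdt(T_{i-1},u_{i-1})$. Throughout I would carry the extra invariant that $\alpha_i=\lambda^{(i)}/\mu$ and $\beta_i=\lambda/\lambda^{(i)}$ for a partition with $\mu\subset\lambda^{(i)}\subset\lambda$; this holds because the tail of any element of $\LHT^*_n(\alpha_{i-1})$ is a southeast corner of $\alpha_{i-1}$ — the minimality of the tail entry together with the row and column conditions of $\LHT^*_n$ forbids a neighbour to the right or below — so removing it keeps $\alpha_i$ of the required form.

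\medskip

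The first step is to locate the slide path. Since $u_{i-1}$ is a southeast corner of $\alpha_{i-1}=\lambda^{(i-1)}/\mu$, if a right- or down-neighbour of $u_{i-1}$ lies in $\lm$ then it lies in $\beta_{i-1}=\lambda/\lambda^{(i-1)}$, and $\beta_{i-1}$ is closed under moving one step right or down inside $\lm$. The active cell of $\vjdt$ only ever moves right or down, and its stopping rule (combined with entries being nonnegative) never lets it leave $\lm$; hence after its first move it stays in $\beta_{i-1}$, or else it stops at $u_{i-1}$. Consequently $T_i$ agrees with $T_{i-1}$ on $\alpha_i=\alpha_{i-1}\setminus\{u_{i-1}\}$, so $T_i|_{\alpha_i}=T_{i-1}|_{\alpha_i}\in\LHT^*_n(\alpha_i)$ — deleting a southeast corner only removes constraints — and the slide ends at some $v_i\in\beta_i=\beta_{i-1}\cup\{u_{i-1}\}$.

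\medskip

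The heart of the argument is to prove $T_i|_{\beta_i}\in\SSCT^*_n(\beta_i)$ and $\head(T_i|_{\beta_i})=v_i$. Write $a_r$ for the entry of $u_{i-1}$; by the choice of the tail, $r$ is the smallest mark of $T_{i-1}|_{\alpha_{i-1}}$ and $a$ the smallest value among its cells of mark $r$. Following the slide one step at a time, the entry $a_r$ keeps its mark while travelling and each vacated cell receives a neighbouring entry with its value shifted by $\pm1$. A case check on the two moves of $\vjdt$ (the dichotomy $b-1>c$ versus $c+1\ge b$) then shows that the already-modified part of the tableau retains weakly decreasing rows and strictly decreasing columns of values, and that the $\SSCT^*_n$ content bound on the entries is never violated; this is exactly where the minimality of $(r,a)$ and the inductive hypothesis $T_{i-1}|_{\beta_{i-1}}\in\SSCT^*_n(\beta_{i-1})$ are used. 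Since each $u_k$ is the rightmost minimiser of a set from which the previous minimiser has been deleted, the entries of $u_0,u_1,\dots$ are weakly increasing in the order ``mark first, value second''; this is the conceptual reason a minimum of $\alpha_{i-1}$ becomes the head entry of $\beta_i$. Concretely, the slide halts exactly when $a_r$ can move neither further right nor further down, which forces $v_i$ to be the leftmost cell carrying the largest value among the cells of largest mark of $T_i|_{\beta_i}$, i.e.\ $\head(T_i|_{\beta_i})=v_i$.

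\medskip

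Finally, statement (3) asks that $\mjdt$ undo $\vjdt$. The two algorithms are co-designed: a rightward value-move is reversed by a leftward mark-move and a downward value-move by an upward mark-move, and the asymmetric tie-breaks ``$b-1>c$'' / ``$c+1\ge b$'' of $\vjdt$ are placed opposite the tie-breaks ``$t<r\le s$, or $s,t<r$ and $b\ge c-1$'' / ``$s<r\le t$, or $s,t<r$ and $c>b+1$'' of $\mjdt$. One checks that at each cell along the path the local triple seen by $\mjdt$ determines which move $\vjdt$ last performed and restores the overwritten entry, so that $\mjdt(T_i,v_i)$ retraces the slide in reverse and returns $(T_{i-1},u_{i-1})$. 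The main obstacle is precisely this bookkeeping, together with the case analysis of the previous paragraph: both are finite but delicate, because values, marks and the content bound must be tracked simultaneously along the whole slide path, and one must in particular confirm that the cell where $\mjdt$ stops is recovered as the \emph{rightmost} minimiser $\tail(T_{i-1}|_{\alpha_{i-1}})$, not merely as some southeast corner of $\alpha_{i-1}$.
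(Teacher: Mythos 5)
Your overall strategy coincides with the paper's: induct on $i$, use that $u_{i-1}=\tail(T_{i-1}|_{\alpha_{i-1}})$ is a southeast corner so that $\alpha_i,\beta_i$ remain skew shapes, observe that the slide never touches $\alpha_i$, verify semistandardness of $T_i|_{\beta_i}$ by a local case check along the slide path, and reverse the slide with $\mjdt$. However, there is a genuine gap at the single most delicate point, namely part (2). You write that ``the slide halts exactly when $a_r$ can move neither further right nor further down, which forces $v_i$ to be the leftmost cell carrying the largest value among the cells of largest mark.'' This does not follow: the stopping rule of $\vjdt$ only constrains the two cells immediately right of and below $v_i$, whereas $\head$ is defined by a global comparison over all of $\beta_i$. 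Because the tails are successive \emph{rightmost} minimisers, several consecutive slides can deposit the \emph{same} entry $a_r$ at distinct endpoints $v_j,v_{j+1},\dots,v_i$, and one must prove that each new endpoint lands in a column strictly to the left of the previous one. This is where the paper spends most of its effort: it establishes a path-non-crossing claim (the active cell of the $(i{+}1)$-st slide never re-enters a position of the $i$-th slide's path at which that path moved south), proved by contradiction via the local $2\times 2$ configuration of Figure~\ref{fig:22}, and then rules out $q'\ge q$ by a further case analysis. Your proposal contains no substitute for this argument, and your appeal to the weak monotonicity of the deposited entries, while true, is only the setup for the problem, not its resolution.

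A secondary, smaller issue: in part (3) you correctly flag that one must check where $\mjdt(T_i,v_i)$ stops, but you do not supply the reason. The paper's reason is that, granted (2), the backward slide from the \emph{leftmost} maximal cell $v_i$ must run all the way to a northwest corner of $\beta_i$, which is $u_{i-1}$, and it halts there because every mark in $\alpha_i$ is at least $r$, so the stopping condition $r\le s$ and $r\le t$ of Algorithm~\ref{alg:mjdt} is met. Since this step depends on (2), the gap above propagates into (3) as well. The rest of your outline (the invariance of $T_i$ on $\alpha_i$, the local preservation of semistandardness away from the active cell, and the pairing of the tie-breaking rules of $\vjdt$ and $\mjdt$) matches the paper and is sound.
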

\begin{proof}
(1): We prove this for $i=0,1,\dots,|\lm|$ by induction. 
Since $T_0|_{\alpha_0}=L$ and $T_0|_{\beta_0}=\emptyset$, it is true for $i=0$. Let $1\le i\le |\lm|$ and suppose that (1) is true for $i-1$. 
Since $T_{i-1}|_{\alpha_{i-1}}\in\LHT^*_n(\alpha_{i-1})$, we have that $u_{i-1} = \tail(T_{i-1}|_{\alpha_{i-1}})$ is a southeast corner of $\alpha_{i-1}$. Hence, $\alpha_i  = \alpha_{i-1}\setminus\{u_{i-1}\}$ and $\beta_i = \beta_{i-1}\cup\{u_{i-1}\}$ are skew shapes. When we compute $(T_i,v_i)=\vjdt(T_{i-1},u_{i-1})$, 
 the value-jdt algorithm does not modify the cells in $\alpha_{i}$, which implies that
$T_{i}|_{\alpha_{i}} =T_{i-1}|_{\alpha_{i}}=L|_{\alpha_{i}} \in\LHT^*_n(\alpha_i)$ and $\vjdt(T_{i-1}|_{\beta_i},u_{i-1})=(T_i|_{\beta_i},v_i)$.
It is not hard to check that in the process of $\vjdt(T_{i-1}|_{\beta_i},u_{i-1})$ to obtain $T_i|_{\beta_i}$, the values of the cells in $\beta_{i}$ are weakly decreasing in each row and strictly decreasing in each column with only possible exceptions between the active cell and the cell to the right of it and the cell below it. When the process stops these two possible exceptions are resolved and we obtain $T_i|_{\beta_i}\in\SSCT^*_n(\beta_i)$ as desired. 

(2): It is clear from the construction that if $r$ is the largest mark and $a$ is the largest value with mark $r$ in $T_i|_{\beta_i}$, then $T_i(v_i)=a_r$. If $v_i$ is the only cell in $\beta_i$ with this property, then we have $\head(T_i|_{\beta_i})=v_i$. Otherwise, we must show that $v_i$ is the leftmost cell with this property. To this end suppose that $T_{i-1}(u_{i-1})=T_{i}(u_{i})=a_r$, $u_{i-1}=(k,l)$, $u_i=(k',l')$, and $v_{i}=(p,q)$, $v_{i+1}=(p',q')$. Then it is sufficient to show that $q'<q$. 
Since $T_{i-1}|_{\alpha_{i-1}}\in\LHT^*_n(\alpha_{i-1})$ and $u_{i-1}=\head(T_{i-1}|_{\alpha_{i-1}})$, we have $k'\ge k$ and $j'<j$. 

Let $u_{i-1}=w_0,w_1,w_2,\dots,w_d=v_{i}$ be the sequence of positions of the active cell in the construction of $\vjdt(T_{i-1},u_{i-1})=(T_i,v_i)$. 
We claim that when we compute $\vjdt(T_i,v_i)$, the active cell never enters the position $w_t$ if $w_{t+1}$ is south of $w_t$, for $0\le t<d$. 

Suppose that the claim is false. Then we can find the smallest integer $m$ such that $w_m=(g,h)$, $w_{m+1}=(g+1,h)$ and the active cell enters $w_m$. Considering the relative positions of $u_{i-1}$ and $u_i$, one can check that the active cell must enter $w_m$ from the east. Now we focus on the restrictions of $T_{i-1}$, $T_i$, and $T_{i+1}$ to the cells $(g,h-1),(g,h),(g+1,h-1),(g+1,h)$ as in  Figure~\ref{fig:22}.
Let $T_{i-1}(g+1,h-1)=b_x$ and $T_{i-1}(g+1,h)=c_y$. Since $T_{i-1}|_{\beta_{i-1}}\in\SSCT^*_n(\beta_{i-1})$, we have $b\ge c$. Considering the positions of the active cell in the process of $\vjdt(T_{i-1},u_{i-1})$ and $\vjdt(T_{i},u_{i})$, we obtain that $T_i(g+1,h-1)=b_x$, $T_i(g,h)=(c+1)_y$, $T_{i+1}(g+1,h-1)=b_x$, and $T_{i+1}(g,h-1)=c_y$. Since $T_{i+1}|_{\beta_{i+1}}\in\SSCT^*_n(\beta_{i+1})$, we have $b<c$, which is a contradiction to the above fact that $b\ge c$. 
Therefore, the claim is true. 

\begin{figure}
  \centering
\begin{tikzpicture}[scale=.7]
\dcell11{$*$} \dcell12{$*$} \dcell22{$c_y$} \dcell21{$b_x$}
\draw (0,-2)--(0,0)--(4,0);
\begin{scope}[shift={(7,0)}]
\dcell11{$*$} \dcell12{$(c+1)_y$} \dcell22{$*$} \dcell21{$b_x$}
\draw (0,-2)--(0,0)--(4,0);
\end{scope}
\begin{scope}[shift={(14,0)}]
\dcell11{$c_y$} \dcell12{$*$} \dcell22{$*$} \dcell21{$b_x$}
\draw (0,-2)--(0,0)--(4,0);
\end{scope}
\end{tikzpicture} 
  \caption{The restrictions of $T_{i-1}$ (on the left),
$T_i$ (in the middle), and $T_{i+1}$ (on the right) to the cells $(g,h-1),(g,h),(g+1,h-1),(g+1,h)$. }
  \label{fig:22}
\end{figure}

By the above claim, if $q'\ge q$, then the active cell in the process of $\vjdt(T_i,u_i)$ 
must move from $(z,q-1)$ to $(z,q)$ for some $z\ge p$. Suppose that $z=p$.
Let $T_i(p+1,q-1)=c_t$. Since $T_i(p,q)=a_r$, the fact that the active cell moved
from $(p,q-1)$ to $(p,q)$ implies that $a-1>c$. However, this means that
when the active cell was in $(p,q-1)$, its value is at most the value of the cell to the right and greater than the value of the cell below, and the value-jdt algorithm must stop at this stage, which is a contradiction. Therefore, we must have $z>p$. In this case,
since $T_{i+1}|_{\beta_{i+1}}\in\SSCT^*_n(\beta_{i+1})$
and $v_{i+1}$ is strictly below and weakly to the right of $v_i$, we have that the value of $T_{i+1}(v_{i+1})$ is less than the value of $T_{i+1}(v_i)$, which is a contradiction. Therefore, we must have $q'<q$, which completes the proof of (2).

(3): By the fact that $T_{i-1}|_{\beta_{i-1}}\in\SSCT^*_n(\beta_{i-1})$ and $T_{i-1}|_{\beta_{i-1}}\in\SSCT^*_n(\beta_{i})$, it is clear that the reverse process of $\vjdt(T_{i-1},u_{i-1})$ is given by the mark-jdt algorithm. We only need to check that
the process of $\mjdt(T_i,v_i)$ stops when the active cell reaches the cell $u_{i-1}$. 
Let $r$ be the largest mark and $a$ the largest value with mark in $T_i|_{\beta_i}$. 
Since $v_i=\head(T_i|_{\beta_i})$, we have $T_i(v_i)=a_r$ and $v_i$ is the leftmost cell with this property. Therefore, the movement of the active cell in 
the process of $\mjdt(T_i,v_i)$ continues until the active cell reaches at a northwest corner of $\beta_i$, which is $u_i$. If the active cell is at $u_i$, then
the fact that the mark of every cell in $\alpha_i$ is at least $r$ implies that the process of $\mjdt(T_i,v_i)$ stops.
\end{proof}

\begin{lem}\label{lem:mark}
  Let $S\in\SSCT^*_n(\lm)$.  Suppose that $\alpha_i$, $\beta_i$, $T_i$, $u_i$, and $v_i$ are given as in Algorithm~\ref{alg:msort}. Then,
for each $i=0,1,2,\dots,|\lm|-1$, the following properties hold.
\begin{enumerate}
\item $T_i|_{\alpha_i}\in\LHT^*_n(\alpha_i)$
and $T_i|_{\beta_i}\in\SSCT^*_n(\beta_i)$. 
In particular, $T_{0}\in\LHT^*_n(\lm)$. 
\item $\tail(T_i|_{\alpha_i})= u_i$.
\item $\vjdt(T_i,u_i)=(T_{i+1},v_{i+1})$. 
\end{enumerate}
\end{lem}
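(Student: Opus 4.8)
The plan is to establish (1), (2) and (3) simultaneously by downward induction on $i$, from $i=|\lm|$ down to $i=0$; the whole argument mirrors the proof of Lemma~\ref{lem:value} under the dictionary that interchanges values with marks, $\vjdt$ with $\mjdt$, southeast corners with northwest corners, $\tail$ with $\head$, $\LHT^*_n$ with $\SSCT^*_n$, and ``increasing $i$'' with ``decreasing $i$''. The base case is (1) at $i=|\lm|$: since $T_{|\lm|}=S$, $\alpha_{|\lm|}=\emptyset$ and $\beta_{|\lm|}=\lm$, we have $T_{|\lm|}|_{\alpha_{|\lm|}}\in\LHT^*_n(\emptyset)$ vacuously and $T_{|\lm|}|_{\beta_{|\lm|}}=S\in\SSCT^*_n(\lm)$ by hypothesis. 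For the inductive step I assume (1) holds at index $i+1$ and deduce (1), (2), (3) at index $i$.

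For (1), I would run the argument of the proof of Lemma~\ref{lem:value}(1) with all directions reversed. By the inductive hypothesis $T_{i+1}|_{\beta_{i+1}}\in\SSCT^*_n(\beta_{i+1})$, so $v_{i+1}=\head(T_{i+1}|_{\beta_{i+1}})$ carries the largest mark $r$ occurring in $\beta_{i+1}$ and, among the cells of $\beta_{i+1}$ with that mark, the largest value; moreover every cell of $\alpha_{i+1}$ has mark at least $r$. Hence, computing $\mjdt(T_{i+1},v_{i+1})=(T_i,u_i)$, the active cell moves only through cells of $\beta_{i+1}$ and must halt at a northwest corner $u_i$ of $\beta_{i+1}$. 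Consequently $\alpha_i=\alpha_{i+1}\cup\{u_i\}$ and $\beta_i=\beta_{i+1}\setminus\{u_i\}$ are skew shapes, the computation leaves the cells of $\beta_i$ untouched so that $T_i|_{\beta_i}=S|_{\beta_i}\in\SSCT^*_n(\beta_i)$, and $\mjdt(T_{i+1}|_{\alpha_i},v_{i+1})=(T_i|_{\alpha_i},u_i)$. During this mark-jdt the marks in $\alpha_i$ stay weakly decreasing along rows and columns, the only possible exceptions being between the active cell and the cells immediately above and to its left, and these are resolved once the process stops; thus $T_i|_{\alpha_i}\in\LHT^*_n(\alpha_i)$. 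At $i=0$ this gives $T_0=T_0|_{\lm}\in\LHT^*_n(\lm)$.

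For (2) and (3): property (2) asserts $\tail(T_i|_{\alpha_i})=u_i$. From the construction $u_i$ already carries the smallest mark of $T_i|_{\alpha_i}$ and the smallest value among the cells of $\alpha_i$ with that mark, so the substance of (2) is that $u_i$ is the \emph{rightmost} such cell --- the exact analogue of the leftmost-cell assertion in Lemma~\ref{lem:value}(2). I would prove it by dualizing that argument: record the sequence of positions of the active cell of $\mjdt(T_{i+1},v_{i+1})$ and show that the active cell of the next mark-jdt cannot re-enter certain of those positions, by comparing the restrictions of $T_{i+1}$, $T_i$, $T_{i-1}$ to a $2\times 2$ block as in Figure~\ref{fig:22}, with every inequality between values and between marks reversed. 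Granting (2), property (3), $\vjdt(T_i,u_i)=(T_{i+1},v_{i+1})$, follows: once $T_i|_{\alpha_i}\in\LHT^*_n(\alpha_i)$ and $T_i|_{\beta_i}\in\SSCT^*_n(\beta_i)$ are known, a comparison of Algorithms~\ref{alg:vjdt} and \ref{alg:mjdt} shows that $\vjdt$ reverses $\mjdt$ step by step, and the verification that the value-jdt halts precisely when its active cell returns to $v_{i+1}$ is the dual of the corresponding check in the proof of Lemma~\ref{lem:value}(3) (there: that the mark-jdt does not stop before reaching $u_{i-1}$, using that $v_i$ is the leftmost cell with the extremal mark and that all marks of $\alpha_i$ are large).

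The step I expect to be the main obstacle is (2): establishing the reversed $2\times 2$-block claim requires careful bookkeeping of the possible relative positions of the two active cells (of the earlier $\mjdt$ and of the following jeu-de-taquin step) and checking that the reversed chain of inequalities still yields a contradiction. By contrast, the corner and non-modification claims needed for (1) and (3) are routine once one uses, as in Figure~\ref{fig:typical}, that $\beta_{i+1}$ is an ``upper-right'' region of $\lm$, $\alpha_{i+1}$ a ``lower-left'' region, and that the marks of $\alpha_{i+1}$ all dominate those of $\beta_{i+1}$.
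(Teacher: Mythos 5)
Your overall strategy --- downward induction on $i$, dualizing the proof of Lemma~\ref{lem:value} under the dictionary values $\leftrightarrow$ marks, $\vjdt\leftrightarrow\mjdt$, $\tail\leftrightarrow\head$ --- is exactly what the paper intends (its own proof is the one-line remark that the arguments are similar to those of Lemma~\ref{lem:value}), and your reduction of (2) to a reversed $2\times2$-block claim and of (3) to the step-by-step inversion of the two jeux de taquin is the right plan. The problem is in your argument for (1), which rests on a misreading of where the mark-jdt path lives.

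The active cell of $\mjdt(T_{i+1},v_{i+1})$ starts at $v_{i+1}=\head(T_{i+1}|_{\beta_{i+1}})$, which is in general an \emph{interior} cell of $\beta_{i+1}$ (the paper notes explicitly that a head need not be a corner), and travels up and left through $\beta_{i+1}$ until it reaches the northwest corner $u_i$. Every cell on this path except the last lies in $\beta_i=\beta_{i+1}\setminus\{u_i\}$ and is modified by a swap. Hence your assertion that ``the computation leaves the cells of $\beta_i$ untouched so that $T_i|_{\beta_i}=S|_{\beta_i}$'' is false (compare consecutive diagrams in Figure~\ref{fig:sort}: the entries of the value-sorted region change at each step), and the expression $\mjdt(T_{i+1}|_{\alpha_i},v_{i+1})$ does not parse, since $v_{i+1}\notin\alpha_i$ unless the path is trivial. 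In consequence, the verification you then perform --- that the \emph{marks} in $\alpha_i$ stay weakly decreasing up to local exceptions at the active cell --- is aimed at the wrong region and the wrong statistic. What actually has to be checked is: (a) the cells of $\alpha_{i+1}$ are untouched, so $T_i|_{\alpha_{i+1}}=T_{i+1}|_{\alpha_{i+1}}$; (b) the swept cells of $\beta_i$ remain \emph{value}-sorted, so $T_i|_{\beta_i}\in\SSCT^*_n(\beta_i)$ --- this is where the ``sorted up to local exceptions at the active cell'' bookkeeping belongs; and (c) the one deposited cell $u_i$, which carries the content $a_r$ of $v_{i+1}$, satisfies the $\LHT^*$ conditions relative to its neighbours in $\alpha_{i+1}$, using the stopping condition of Algorithm~\ref{alg:mjdt} together with the auxiliary fact (itself needing a short induction) that every cell previously moved into $\alpha$ has mark at least $r$ and, in case of equality, value at least $a$. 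Until (1) is repaired along these lines the induction does not close, so (2) and (3) have nothing to stand on.
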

\begin{proof}
This lemma can be proved by arguments similar to those in the proof of Lemma~\ref{lem:value}. We omit the proof.    
\end{proof}

We now give a bijective proof of Theorem~\ref{thm:main2}. 

\begin{thm}\label{thm:inverse}
The map 
\[
\vsort:\LHT^*_n(\lm)\to \SSCT^*_n(\lm)
\]
is a weight-preserving bijection whose inverse is
\[
\msort: \SSCT^*_n(\lm)\to \LHT^*_n(\lm).
\]
\end{thm}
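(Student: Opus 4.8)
The plan is to derive Theorem~\ref{thm:inverse} almost entirely from Lemmas~\ref{lem:value} and \ref{lem:mark}, which have already isolated the delicate analysis of the two jeu-de-taquin moves, and then to verify weight-preservation by a short inspection of the elementary steps of $\vjdt$ and $\mjdt$. First I would record well-definedness: Lemma~\ref{lem:value}(1) with $i=|\lm|$ says $\vsort(L)=T_{|\lm|}\in\SSCT^*_n(\lm)$ for every $L\in\LHT^*_n(\lm)$, so $\vsort$ maps $\LHT^*_n(\lm)$ into $\SSCT^*_n(\lm)$, and dually Lemma~\ref{lem:mark}(1) with $i=0$ shows $\msort$ maps $\SSCT^*_n(\lm)$ into $\LHT^*_n(\lm)$.

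Next I would show $\msort\circ\vsort=\mathrm{id}$. Fix $L\in\LHT^*_n(\lm)$ and let $(\alpha_i,\beta_i,T_i,u_i,v_i)_{0\le i\le|\lm|}$ be the data generated by $\vsort(L)$, so $S:=\vsort(L)=T_{|\lm|}$. Running $\msort$ on $S$ generates data $(\alpha_i',\beta_i',T_i',u_i',v_i')$, and I would prove by downward induction on $i$ that the two families coincide. The base case $i=|\lm|$ is immediate from the initializations of the two algorithms, except for $v_{|\lm|}'=\head(T_{|\lm|})=v_{|\lm|}$, which is exactly Lemma~\ref{lem:value}(2) applied with $\beta_{|\lm|}=\lm$. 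For the inductive step, assuming agreement at index $i$, Lemma~\ref{lem:value}(3) gives $(T_{i-1}',u_{i-1}')=\mjdt(T_i,v_i)=(T_{i-1},u_{i-1})$; then $\alpha_{i-1}',\beta_{i-1}'$ match $\alpha_{i-1},\beta_{i-1}$ because the set-recursions of $\msort$ invert those of $\vsort$, and $v_{i-1}'=\head(T_{i-1}|_{\beta_{i-1}})=v_{i-1}$ once more by Lemma~\ref{lem:value}(2). Hence $\msort(\vsort(L))=T_0'=T_0=L$. The reverse identity $\vsort\circ\msort=\mathrm{id}$ is obtained symmetrically, running the induction upward from $i=0$ and invoking $\tail(T_i|_{\alpha_i})=u_i$ and $\vjdt(T_i,u_i)=(T_{i+1},v_{i+1})$ from Lemma~\ref{lem:mark}(2),(3). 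This makes $\vsort$ and $\msort$ mutually inverse bijections.

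Finally I would check weight-preservation. Since $\wt^*(T)$ is the product of $x_b$ over cells of finite mark $b$ and of $y_a$ over cells of mark $\infty$ and value $a$, it suffices to show each elementary move of $\vjdt$ preserves the multiset of marks and the multiset of values carried by $\infty$-marked cells. Each move merely permutes the three marks $r,s,t$ among the cells it touches, so the mark multiset is unchanged; the only value altered is that of the displaced neighbour, which goes $b\mapsto b-1$ (right move) or $c\mapsto c+1$ (down move) while keeping its mark, so $\wt^*$ can change only if that neighbour is $\infty$-marked. I would rule this out using the invariants of Lemma~\ref{lem:value}(1): throughout the process the $\infty$-marked cells form a north-west order filter of $\lm$ and restrict among themselves to a semistandard tableau, so a finite-marked active pair is never immediately to the left of, nor immediately above, an $\infty$-marked cell, while an $\infty$-marked active pair --- by the inequalities $a\ge b$ and $a>c$ forced by row-weak and column-strict decrease among $\infty$-marked cells --- can at most make a right move into a finite-marked cell. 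In every case no $y$-variable is touched, so $\wt^*(T_{i-1})=\wt^*(T_i)$; the same reasoning applies to $\mjdt$, hence $\vsort$ and $\msort$ are weight-preserving.

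I expect the weight-preservation step to be the main obstacle, specifically the verification that the active pair never migrates across the boundary between the $\infty$-marked and the finite-marked regions of the intermediate tableaux $T_i$ (which are neither in $\LHT^*_n$ nor in $\SSCT^*_n$); this is precisely where the structural invariants of Lemmas~\ref{lem:value}(1) and \ref{lem:mark}(1) do real work. By contrast, the inductive bookkeeping showing the two algorithms produce identical intermediate data is routine once parts (2) and (3) of those lemmas are in hand.
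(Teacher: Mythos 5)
Your proof follows the paper's argument exactly: mutual inverseness is read off from Lemmas~\ref{lem:value} and \ref{lem:mark} (parts (2) and (3) supply precisely the downward/upward inductions you describe), and weight-preservation comes from observing that the elementary jdt moves preserve the multiset of marks and never alter the value carried by an $\infty$-marked cell. If anything, your case analysis of why no $\infty$-marked cell is ever displaced with a value change is more detailed than the paper's proof, which simply asserts that ``in the process of the value-sorting algorithm, the marks and the values with mark $\infty$ are never changed.''
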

\begin{proof}
Lemmas~\ref{lem:value} and \ref{lem:mark} imply that 
the two maps $\vsort$ and $\msort$ are inverses of each other. 
Suppose $\vsort(L)=S$. In the process of the value-sorting algorithm,
the marks and the values with mark $\infty$ are never changed.
Therefore $\wt^*(L)=\wt^*(S)$. 
\end{proof}

\begin{remark}
  The bijection allows us to generate a random bounded lecture hall tableau of a given partition shape using Krattenthaler's random generation of a semistandard Young tableau. It will be interesting to extend this random generation to skew shapes.
In \cite{BLHT} a different algorithm is established using a Markov chain on bounded lecture hall  tableaux and
coupling from the past.
\end{remark}

\section{A connection between SSCT and SYT}
\label{sec:conn-betw-ssct}

In this section we use the weight-preserving bijection
$\vsort:\LHT^*_n(\lm)\to \SSCT^*_n(\lm)$
and its inverse $\msort: \SSCT^*_n(\lm)\to \LHT^*_n(\lm)$
to find a connection between $|\SSCT_n(\lm)|$ and $|\SYT(\lm)|$. 

Recall the sets $\SYT(\lm)$, $\SSYT_n(\lm)$, $\LHT_n(\lm)$, and $\SSCT_n(\lm)$ defined in the introduction. We also need the following definitions.

A tableau $T$ of shape $\lm$ is called \emph{standard} if every integer $1\le i\le |\lm|$ appears exactly once in $T$.  The set of standard tableaux of shape $\lm$ is denoted by $\ST(\lm)$. An $n$-content tableau of shape $\lm$ is a tableau $T$ of shape $\lm$ such that $0\le T(i,j) < n-i+j$ for all $(i,j)\in\lm$.  The set of $n$-content tableaux of shape $\lm$ is denoted by $\CT_n(\lm)$.
A \emph{hook tabloid} of shape $\lambda$ is a map $H:\lambda\to \ZZ$ satisfying
$-\leg(i,j) \le H(i,j)\le \arm(i,j)$ for all $(i,j)\in\lambda$,
where $\leg(i,j)=\lambda_j'-i$ and $\arm(i,j)=\lambda_i-j$.
We denote by $\HT(\lambda)$ the set of hook tabloids of shape $\lambda$. 

Let us now consider the map $\vsort:\LHT^*_n(\lm)\to \SSCT^*_n(\lm)$
restricted to the following sets:
\begin{align*}
X_n(\lm)&=\{L\in\LHT^*_n(\lm): \wt^*(L)=x_1\cdots x_{|\lm|}\},\\
Y_n(\lm)&=\{T\in\SSCT^*_n(\lm): \wt^*(T)=x_1\cdots x_{|\lm|}\}. 
\end{align*}
Since $\vsort$ is a weight-preserving bijection, we obtain the induced bijection
\[
\vsort:X_n(\lm)\to Y_n(\lm).
\]

We can naturally identify $L\in X_n(\lm)$ with the pair $(A,R)$ of tableaux of shape $\lm$: if $L(i,j)=a_r$ then $A(i,j)=a$ and $R(i,j)=r$. Then by the condition on $L$, we have $A\in\CT_n(\lm)$ and $R\in\SYT(\lm)$. This allows us to identify $X_n(\lm)$ with $\CT_n(\lm)\times \SYT(\lm)$. Similarly, we can identify $Y_n(\lm)$ with $\SSCT_n(\lm)\times \ST(\lm)$. Using this identification we can consider $\vsort$ as a bijection between these sets:
\begin{equation}
  \label{eq:vsort}
\vsort: \CT_n(\lm)\times \SYT(\lm) \to \SSCT_n(\lm)\times \ST(\lm).  
\end{equation}
Therefore we obtain the following corollary, which is a restatement of Proposition~\ref{prop:KS}. 

\begin{cor}\label{cor:prob}
For any skew shape $\lm$, we have
\[
\frac{|\SSCT_n(\lm)|}{\prod_{x\in\lm}(n+c(x))} = \frac{|\SYT(\lm)|}{|\lm|!},  
\]
which means that the probability that a random $T\in\CT_n(\lm)$ is semistandard 
is equal to the probability that a random $T\in\ST(\lm)$ is a standard Young tableau. 
\end{cor}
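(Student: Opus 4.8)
The plan is to deduce Corollary~\ref{cor:prob} as a pure counting consequence of the weight-preserving bijection $\vsort$ of Theorem~\ref{thm:inverse}, specialized to the squarefree monomial $x_1\cdots x_{|\lm|}$. First I would note that, since $\vsort$ preserves $\wt^*$, it restricts to a bijection $X_n(\lm)\to Y_n(\lm)$ between the fibers over $x_1\cdots x_{|\lm|}$, and then identify these fibers concretely. A tableau with $\wt^*$ equal to $x_1\cdots x_{|\lm|}$ must have every cell carrying a finite mark, and these marks must be exactly $1,\dots,|\lm|$, each used once. Writing $L(i,j)=a_r$ and setting $A(i,j)=a$, $R(i,j)=r$, the mark tableau $R$ is a bijective filling; the conditions defining $\LHT^*_n$ then say precisely that $R$ is strictly decreasing in rows and columns (the clauses governing equal marks never apply) while $A$ satisfies only $0\le a<n-i+j$, i.e.\ $R\in\SYT(\lm)$ and $A\in\CT_n(\lm)$. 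Likewise, for $Y_n(\lm)$ the value tableau lies in $\SSCT_n(\lm)$ and the mark tableau in $\ST(\lm)$. This is exactly the identification \eqref{eq:vsort}.

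Next I would take cardinalities in \eqref{eq:vsort}, obtaining
\[
|\CT_n(\lm)|\cdot|\SYT(\lm)| = |\SSCT_n(\lm)|\cdot|\ST(\lm)|,
\]
and evaluate the two elementary factors from the definitions. An $n$-content tableau is obtained by choosing, independently for each cell $(i,j)\in\lm$, an integer in $\{0,1,\dots,n-i+j-1\}$; since $n-i+j=n+c(i,j)$, this gives $|\CT_n(\lm)|=\prod_{x\in\lm}(n+c(x))$. A standard tableau of shape $\lm$ is just a bijection from the cells to $\{1,\dots,|\lm|\}$, so $|\ST(\lm)|=|\lm|!$. Substituting and rearranging yields
\[
\frac{|\SSCT_n(\lm)|}{\prod_{x\in\lm}(n+c(x))}=\frac{|\SYT(\lm)|}{|\lm|!},
\]
which is the asserted identity. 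The probabilistic reading is then immediate: the left side is $|\SSCT_n(\lm)|/|\CT_n(\lm)|$, the probability that a uniformly random $n$-content tableau of shape $\lm$ is semistandard, and the right side is $|\SYT(\lm)|/|\ST(\lm)|$, the probability that a uniformly random standard tableau of shape $\lm$ is a standard Young tableau.

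The real content has already been spent in establishing the bijection $\vsort$ of Theorem~\ref{thm:inverse} (which rests on Lemmas~\ref{lem:value} and \ref{lem:mark}); conditional on that, the argument above is a one-paragraph cardinality count, and I foresee no genuine obstacle. The only point needing care is the bookkeeping that the $\LHT^*_n$ and $\SSCT^*_n$ conditions collapse to the stated product descriptions once one restricts to fibers on which all marks are distinct.
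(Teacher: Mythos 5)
Your proof is correct and follows essentially the same route as the paper: Section~\ref{sec:conn-betw-ssct} derives Corollary~\ref{cor:prob} precisely by restricting $\vsort$ to the fiber over $x_1\cdots x_{|\lm|}$, identifying $X_n(\lm)$ with $\CT_n(\lm)\times\SYT(\lm)$ and $Y_n(\lm)$ with $\SSCT_n(\lm)\times\ST(\lm)$, and taking cardinalities with $|\CT_n(\lm)|=\prod_{x\in\lm}(n+c(x))$ and $|\ST(\lm)|=|\lm|!$. Your bookkeeping that the $\LHT^*_n$ and $\SSCT^*_n$ conditions collapse as stated when all marks are distinct is accurate.
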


It is possible to understand the probabilistic description in Corollary~\ref{cor:prob} using the map \eqref{eq:vsort}. To this end we note that
each element $(A,B)\in \CT_n(\lm)\times \SYT(\lm)$ is a fixed point of $\vsort$,
i.e., $\vsort(A,B)=(A,B)$, if and only if $A\in\SSCT_n(\lm)$. 
Similarly, each element $(A,B)\in \SSCT_n(\lm)\times \ST(\lm)$ is a fixed point
of the inverse map $\msort=\vsort^{-1}$ if and only if $B\in\SYT(\lm)$. 
The probability that a random $A\in\CT_n(\lm)$ is an element in $\SSCT_n(\lm)$ is clearly equal to the probability that a random pair $(A,B)\in \CT_n(\lm)\times \SYT(\lm)$ satisfies $A\in\SSCT_n(\lm)$. In other words, this is the probability that
a random pair $(A,B)\in \CT_n(\lm)\times \SYT(\lm)$ is a fixed point of $\vsort$. 
By the same argument, we obtain that
the probability that a random $B\in\ST(\lm)$ is an element of $\SYT(\lm)$
is equal to the probability that a random pair $(A,B)\in \SSCT_n(\lm)\times \ST(\lm)$
is a fixed point of the map $\vsort^{-1}$. Since $\vsort$ and $\vsort^{-1}$ are inverses of each other with the same set of fixed points, we obtain that the two probabilities that we consider are equal.

We now consider the map \eqref{eq:vsort} for the case $\mu=\emptyset$.
Since $\SSCT_n(\lambda)=\SSYT_n(\lambda)$, we have the following bijection: 
\begin{equation}
  \label{eq:vsort2}
\vsort: \CT_n(\lambda)\times \SYT(\lambda) \to \SSYT_n(\lambda)\times \ST(\lambda).  
\end{equation}
 Recall the two bijections due to Novelli--Pak--Stoyanovskii and Krattenthaler. 

\begin{thm}[Novelli-Pak-Stoyanovskii]
For any partition $\lambda$, there is a bijection 
\[
\phi_{NPS}: \ST(\lambda)\to \SYT(\lambda)\times \HT(\lambda).
\]
\end{thm}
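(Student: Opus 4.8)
The plan is to construct $\phi_{NPS}$ as a jeu-de-taquin sorting procedure --- structurally parallel to the value-sorting algorithm of Section~\ref{sec:bijective-proof-main} (Algorithm~\ref{alg:vsort}), but with all the content and mark bookkeeping removed --- and then to prove bijectivity by writing down its inverse explicitly. (We work throughout with this paper's convention that a standard Young tableau decreases along each row and each column; this is immaterial, since $a\mapsto|\lambda|+1-a$ is an involution of $\ST(\lambda)$ interchanging the decreasing and increasing conventions and fixing $\HT(\lambda)$.) Order the cells of $\lambda$ as $c_1,\dots,c_{|\lambda|}$ by scanning the columns from the rightmost to the leftmost and, within a column, from the bottom cell to the top cell. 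Set $T_0=T$, and for $k=1,\dots,|\lambda|$, writing $c_k=(i,j)$, apply a forward slide to the entry in cell $(i,j)$: that entry repeatedly changes places with the larger of the two entries immediately to its right and immediately below it (a missing neighbour counting as $-\infty$), moving into that neighbour's cell, and halts once it exceeds both of these neighbours. This turns $T_{k-1}$ into $T_k$ and traces a lattice path $(i,j)=p_0,p_1,\dots,p_m$ of unit right and down steps. The initial portion of this path is a (possibly empty) maximal run that is either rightward, staying in row $i$, or downward, staying in column $j$; set $H(i,j)=+h$ if it consists of $h$ right steps, $H(i,j)=-h$ if it consists of $h$ down steps, and $H(i,j)=0$ if the entry does not move. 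Because an initial run of rights (resp.\ downs) from $(i,j)$ has length at most $\arm(i,j)$ (resp.\ $\leg(i,j)$), we get $H\in\HT(\lambda)$; and $P:=T_{|\lambda|}$ decreases along each row and column, so $P\in\SYT(\lambda)$. Put $\phi_{NPS}(T)=(P,H)$.

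For the inverse, given $(P,H)\in\SYT(\lambda)\times\HT(\lambda)$, process the cells in the reverse order $c_{|\lambda|},\dots,c_1$. To undo the step at $c_k=(i,j)$, first lay down the initial run of the slide path --- $|H(i,j)|$ unit steps rightward if $H(i,j)>0$, downward if $H(i,j)<0$, the empty path if $H(i,j)=0$ --- then extend it by the reverse jeu-de-taquin slide, which from the first turn onward is forced by the current tableau, and finally run the whole slide backwards, recovering $T_{k-1}$ from $T_k$. Carrying this out produces $\phi_{NPS}^{-1}$.

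The genuinely non-routine part, and the main obstacle, is the structural input that makes both directions work: (i) the invariant that, for each $k$, just before $c_k=(i,j)$ is processed the cells already placed --- those in columns strictly to the right of $j$ together with those of column $j$ strictly below $(i,j)$ --- carry a standard Young tableau of the resulting skew sub-shape, so that the slide at $c_k$ is a bona fide jeu-de-taquin slide into that tableau from its northwest corner; and (ii) the reversibility of one such slide from $T_k$ and $H(i,j)$ alone, the point being that once the traveling entry has made its first turn it is moving within a standard sub-tableau, where the reverse slide is forced, so the only information about the slide not already recorded in $T_k$ is the length and direction of the initial run --- precisely $H(i,j)$. I would prove (i) by induction on $k$ and (ii) by the usual analysis of reverse jeu de taquin, much as in the proof of Lemma~\ref{lem:value}. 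Everything else is bookkeeping: that $H$ is well defined, that $P$ is genuinely standard, and that the two algorithms are mutually inverse; and, as a consistency check, $|\HT(\lambda)|=\prod_{(i,j)\in\lambda}(\arm(i,j)+\leg(i,j)+1)=\prod_{(i,j)\in\lambda}h(i,j)$, so the existence of the bijection is equivalent to the hook length formula \eqref{eq:6}.
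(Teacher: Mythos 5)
Your overall architecture---process the cells in a suitable order, sift each entry into place by a forward jeu-de-taquin slide, and record data about each slide path in a hook tabloid---is indeed the architecture of the Novelli--Pak--Stoyanovskii proof (the paper itself does not prove this theorem but only quotes it from \cite{NPS}). However, the specific encoding you choose, namely letting $H(i,j)$ be the signed length of the \emph{maximal initial straight run} of the slide path, loses information, and the resulting map is not injective. Concretely, take $\lambda=(2,2)$ and compare $T=\left(\begin{smallmatrix}1&2\\3&4\end{smallmatrix}\right)$ with $T'=\left(\begin{smallmatrix}2&1\\3&4\end{smallmatrix}\right)$ (entries listed by rows). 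With your cell order $(2,2),(1,2),(2,1),(1,1)$, both inputs produce $H(2,2)=H(2,1)=0$ and $H(1,2)=-1$, and after the first three steps they have become $\left(\begin{smallmatrix}1&4\\3&2\end{smallmatrix}\right)$ and $\left(\begin{smallmatrix}2&4\\3&1\end{smallmatrix}\right)$, respectively. Sliding the $(1,1)$ entry then traces the path right-then-down in the first case and a single right step in the second; both paths have initial run ``one right step'', so $H(1,1)=+1$ in both cases, and both final tableaux equal $\left(\begin{smallmatrix}4&2\\3&1\end{smallmatrix}\right)$. Two distinct elements of $\ST(\lambda)$ therefore map to the same pair $(P,H)$, so your map cannot be a bijection; correspondingly, your inverse step ``extend the initial run by the reverse slide, which from the first turn onward is forced by the current tableau'' is not well defined: from the cell $(1,2)$ reached after laying down the initial run, nothing in $P$ tells you whether the path stopped there or continued to $(2,2)$.

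The failing step is precisely your claim (ii), that after the first turn the rest of the slide is recoverable from the output tableau: the continuation of a forward slide beyond its initial run is governed by comparisons in the tableau \emph{before} the slide, and those entries have moved. This is exactly the delicate point of the NPS construction. Their hook function does not record the initial segment of the path but, in effect, its \emph{endpoint} (signed according to the direction of arrival), and the bulk of their argument goes into showing that the backward algorithm---which uses $H(i,j)$ to select the correct endpoint from a set of candidate cells and only then performs a forced reverse slide---is well defined and inverts the forward map. In the example above the two paths end at $(2,2)$ and $(1,2)$ respectively, so an endpoint encoding does separate them. Your bookkeeping claims (the cell order makes each slide legitimate, $H\in\HT(\lambda)$, $P\in\SYT(\lambda)$, and $|\HT(\lambda)|=\prod_{(i,j)\in\lambda}h(i,j)$) are all fine; the gap is invertibility, which is the entire content of the theorem.
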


\begin{thm}[Krattenthaler]
For any partition $\lambda$, there is a bijection 
\[
\phi_{K}: \CT_n(\lambda)\to \SSYT_n(\lambda)\times \HT(\lambda).
\]
\end{thm}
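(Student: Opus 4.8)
The plan is to construct $\phi_K$ directly, following Krattenthaler \cite{Kratt1999}: one sorts an $n$-content tableau into a semistandard one by a content-weighted jeu de taquin and records the combinatorics of the successive slides as a hook tabloid. The elementary slide is essentially the value-jeu de taquin of Algorithm~\ref{alg:vjdt} with its marks suppressed (the algorithms of Section~\ref{sec:bijective-proof-main} were inspired by Krattenthaler's), so much of the present apparatus can be reused. Note that one cannot simply quote the bijection $\vsort$: composing it with the Novelli--Pak--Stoyanovskii map only yields $\CT_n(\lambda)\times\SYT(\lambda)\cong\SSYT_n(\lambda)\times\SYT(\lambda)\times\HT(\lambda)$, and the extra $\SYT(\lambda)$ factor cannot be cancelled canonically.

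First I would fix the positional order $u_1,\dots,u_N$, $N=|\lambda|$, on the cells of $\lambda$ used by Novelli--Pak--Stoyanovskii \cite{NPS} --- columns from right to left, within a column from bottom to top --- and define fillings $C=C_0,C_1,\dots,C_N$ and integers $h_1,\dots,h_N$ recursively. At step $k$ the cells $u_1,\dots,u_{k-1}$ already carry a semistandard configuration; we insert the entry at $u_k$ into that region by a $\vjdt$-type slide, obtaining $C_k$, and we let $h_k$ be Krattenthaler's path statistic of that slide (a single integer, recording the slide as a choice of a cell in the hook of $u_k$). The output is $\phi_K(C)=(S,H)$ with $S=C_N$ and $H(u_k)=h_k$.

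The verification has three parts. (i) Structural invariant: for each $k$, the restriction of $C_k$ to $\{u_1,\dots,u_k\}$ is a semistandard $n$-content tableau, so that $S\in\SSYT_n(\lambda)$; this is the analogue of Lemma~\ref{lem:value}(1), since a slide can only violate semistandardness between the active cell and its right and lower neighbours, and such violations are repaired exactly when the slide halts. (ii) Hook-range bound: $h_k\in\{-\leg(u_k),\dots,\arm(u_k)\}$, an interval with exactly $h(u_k)=\arm(u_k)+\leg(u_k)+1$ admissible values, so that $H\in\HT(\lambda)$. (iii) Invertibility: a $\mjdt$-type reverse slide, run in the order $u_N,u_{N-1},\dots,u_1$ and steered at step $k$ by the integer $H(u_k)$, recovers $C_{k-1}$ from $C_k$, as in Lemmas~\ref{lem:value}(3) and \ref{lem:mark}; this exhibits the inverse of $\phi_K$ and proves it is a bijection.

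I expect the main obstacle to be the interlocking of (ii) and (iii): showing that the forward path statistic always lands in $[-\leg(u_k),\arm(u_k)]$ and, conversely, that every integer in that interval is realised by a legal reverse slide, so that $(S,H)\mapsto C$ is well defined on all of $\SSYT_n(\lambda)\times\HT(\lambda)$. These are the content analogues of the technical lemmas of Section~\ref{sec:bijective-proof-main}; since the elementary moves are literally those of Algorithms~\ref{alg:vjdt} and \ref{alg:mjdt}, the semistandardness-preservation and the step-by-step invertibility of the forward slides via the reverse slides should transfer from Lemmas~\ref{lem:value} and \ref{lem:mark} with only notational changes, whereas the hook-interval statement --- together with the check that the fixed positional order keeps the sorted region a genuine skew shape at every step --- is genuinely new. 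As a consistency check, $|\CT_n(\lambda)|=\prod_{(i,j)\in\lambda}(n-i+j)$ while $|\HT(\lambda)|=\prod_{(i,j)\in\lambda}h(i,j)$, so $\phi_K$ reproves Stanley's hook-content formula \eqref{eq:7}, which is the purpose of Krattenthaler's construction.
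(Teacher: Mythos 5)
The paper does not prove this statement: it is quoted verbatim from Krattenthaler \cite{Kratt1999} as a known result, so there is no internal proof to compare against. Your preliminary observations are sound --- in particular you are right that one cannot obtain $\phi_K$ by composing $\vsort$ with $\phi_{NPS}$ (the paper's Figure~\ref{fig:maps} makes the same point, noting the three maps ``are not directly related''), and your strategy is indeed the strategy of Krattenthaler's paper: process the cells in the Novelli--Pak--Stoyanovskii order, sort by a content-modified jeu de taquin, and record each slide as a hook entry.

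However, as a proof the proposal has a genuine gap, and you have located it yourself: items (ii) and (iii) are not ``expected obstacles'' but the entire content of the theorem, and they are asserted rather than proved. Concretely, you never define the ``path statistic'' $h_k$. It is not enough to say the slide is recorded as ``a choice of a cell in the hook of $u_k$'': the slide path is a lattice path from $u_k$ of varying length, the number of such paths is in general not $h(u_k)$, and the whole difficulty in \cite{Kratt1999} (as in \cite{NPS}) is to design an encoding such that the pair (sorted configuration after the slide, hook entry) determines the configuration before the slide, \emph{and} such that every value in $\{-\leg(u_k),\dots,\arm(u_k)\}$ paired with every admissible sorted configuration arises from a legal reverse slide. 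This requires a case analysis on the terminal step of the path and a ``candidate cell'' argument that has no analogue in Lemmas~\ref{lem:value} and \ref{lem:mark} of this paper, where the role of the hook tabloid is instead played by the marks and the tail/head selection rule. Without the definition of $h_k$ and the two-sided verification that $(C_k,h_k)\leftrightarrow C_{k-1}$ is a bijection at each step, what you have is an accurate description of the shape of Krattenthaler's argument, not a proof of the theorem. (Your counting consistency check at the end is correct and is a good sanity test, but of course it presupposes the bijection rather than establishing it.)
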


Note that $\phi_{NPS}$ naturally induces a bijection
\[
\phi_{NPS}:\SSYT_n(\lambda)\times \ST(\lambda) \to 
\SSYT_n(\lambda)\times\HT(\lambda)\times\SYT(\lambda)
\]
by fixing the first component. Similarly $\phi_K$  induces a bijection
\[
\phi_K:\CT_n(\lambda)\times \SYT(\lambda) \to 
\SSYT_n(\lambda)\times\HT(\lambda)\times\SYT(\lambda).
\]
Then the three maps $\vsort$, $\phi_K$, and $\phi_{NPS}$ are bijections
among three sets $\CT_n(\lambda)\times \SYT(\lambda)$, 
$\SSYT_n(\lambda)\times \ST(\lambda)$, and $\SSYT_n(\lambda)\times\HT(\lambda)\times\SYT(\lambda)$,
see Figure~\ref{fig:maps}. These maps are not directly related. It might be interesting to find any connection between these maps.

\begin{figure}
  \centering
  \begin{tikzpicture}
\node at (-3,1.5) {$\CT_n(\lambda)\times \SYT(\lambda)$};
\node at (3,1.5) {$\SSYT_n(\lambda)\times \ST(\lambda)$};
\node at (0,.1) {$\SSYT_n(\lambda)\times\HT(\lambda)\times\SYT(\lambda)$};    
\draw [<->] (-.9,1.5)--(.9,1.5); 
\node at (0,1.8) {$\vsort$};
\node at (-1,.8) {$\phi_K$};
\node at (1.2,.8) {$\phi_{NPS}$};
\draw [<->] (-1,1.2)--(-.2,.5); 
\draw [<->] (1,1.2)--(.2,.5); 
  \end{tikzpicture}
  \caption{Three maps between three objects.}
  \label{fig:maps}
\end{figure}

\section{Final remarks}
\label{sec:final}

Stanley \cite{Stanley72} showed that semistandard Young tableaux and standard Young tableaux fit together nicely in the framework of the $P$-partition theory, see also \cite[Chapter 3]{EC1} and \cite[Chapter 7]{Stanley}.
Lecture hall tableaux are also a special case of lecture hall $P$-partitions introduced by Br\"and\'en and Leander \cite{BL}. They found a connection between generating functions for the bounded lecture hall $P$-partitions and colored linear extensions of $P$. It will be interesting to compare our results with theirs.

 \begin{problem}
   Investigate bounded lecture hall tableaux using the results of Br\"and\'en and Leander \cite{BL}.
 \end{problem}

Krattenthaler's map \cite{Kratt1999} in fact gives a bijective proof of the following $q$-analog of \eqref{eq:7}, also due to Stanley \cite{Stanley1971}:
\[
\sum_{T\in\SSYT_n(\lambda)}q^{|T|}= q^{\sum_{i\ge1} (i-1)\lambda_i} \prod_{(i,j)\in\lambda} \frac{[n+c(i,j)]_q}{[h(i,j)]_q},
\]
where $[k]_q=1+q+q^2+\cdots+q^{k-1}$. If we only look at the values and ignore the marks, then our jeu de taquin slides in Algorithms~\ref{alg:vjdt} and \ref{alg:mjdt} are essentially the same as those in \cite{Kratt1999}. Recall that during these algorithms values are changing. Krattenthaler carefully designed his bijection so that these value changes are consistent with the value changes in hook tabloids. Our bijection, on the contrary, does not have hook tabloids, which makes it difficult to follow the change of values. If we can keep track of all the value changes, then it may be possible to find a refinement of Theorem~\ref{thm:main2}.
 
\begin{problem}
  Find a $q$-analogue of Theorem~\ref{thm:main2}. 
 \end{problem}

For a partition $\lambda=(\lambda_1,\dots,\lambda_\ell)$ with distinct parts, the \emph{shifted Young diagram} of
$\lambda$ is an array of squares in which the $i$th row has $\lambda_i$ squares and is shifted to the right by $i-1$ units. Standard Young tableaux and semistandard Young tableaux of a shifted shape can then be defined in a similar fashion. They also enjoy nice enumerative properties as in the case of a usual shape. 

\begin{problem}
Find a formula for the number of bounded $n$-lecture hall tableaux of a given shifted shape. 
\end{problem}

 Let $\delta_n=(n-1,n-2,\dots,1,0)$ and 
\[
d_{\lambda,\mu}^{(n)}=\det\left(\binom{\lambda_i+n-i}{\mu_j+n-j}\right)_{1\le i,j\le n},
\]
which is by Proposition~\ref{prop:KS} equal to $|\SSCT_n(\lm)|$. As mentioned in the introduction Lascoux \cite{Lascoux1978} used \eqref{eq:Lascoux}
to compute the Chern classes of the exterior square $\wedge^2 E$ and symmetric square $\Sym^2 E$
of a vector bundle $E$. To be more precise, let $c(E)=\prod_{i=1}^n(1+y_i)$ be the total Chern class of $E$. Lascoux showed that
\begin{align}
  \label{eq:ext}
c(\wedge^2E) &=\prod_{1\le i<j\le n}  (1+y_i+y_j) = \sum_{\mu\subset\delta_n} 2^{|\mu|-\binom n2} d^{(n)}_{\delta_n,\mu} s_\mu(y_1,\dots,y_n),\\
  \label{eq:sym}
c(\Sym^2E) &=\prod_{1\le i\le j\le n}  (1+y_i+y_j) = \sum_{\lambda\subset\delta_{n+1}} 2^{|\lambda|-\binom n2} d^{(n)}_{\delta_{n+1},\lambda} s_\lambda(y_1,\dots,y_n).
\end{align}

Billey, Rhoades, and Tewari \cite[Corollary~4.3]{BRT} found the following manifestly integral and positive formulas for the Schur expansions
of $c(\wedge^2E)$ and $c(\Sym^2E)$:
\begin{align}
  \label{eq:BRT1}
\prod_{1\le i<j\le n}  (1+y_i+y_j) &= \sum_{\mu\subset\delta_n} r^{(n)}_\mu s_\mu(y_1,\dots,y_n),\\
  \label{eq:BRT2}
\prod_{1\le i\le j\le n}  (1+y_i+y_j) &= \sum_{\lambda\subset\delta_{n+1}} \sum_{\substack{\mu\subseteq\lambda\cap\delta_n\\ \lm \mbox{ \small a vertical strip}}} 2^{|\lm|} r^{(n)}_\mu s_\lambda(y_1,\dots,y_n),
\end{align}
where a \emph{vertical strip} is a skew shape in which every row has at most one cell and
$r^{(n)}_\mu$ is the number of tableaux of shape $\mu$ such that
the entries are strictly decreasing along rows and weakly decreasing down columns, 
and every entry in row $i$ is in $\{1,2,\dots,n-i\}$. 

Comparing the Schur coefficients in \eqref{eq:ext}, \eqref{eq:sym}, \eqref{eq:BRT1}, and \eqref{eq:BRT2}, and using the fact $d^{(n)}_\lm=|\SSCT_n(\lm)|$, we obtain the following proposition.

\begin{prop}\label{prop:BRT}
For $\mu\subseteq\delta_n$ and $\lambda\subseteq\delta_{n+1}$, we have
\begin{align*}
|\SSCT_n(\delta_n/\mu)|&=2^{|\delta_n/\mu|}r^{(n)}_\mu,\\
|\SSCT_n(\delta_{n+1}/\lambda)|&=\sum_{\substack{\mu\subseteq\lambda\cap\delta_n\\ \lm \mbox{ \small a vertical strip}}}2^{|\delta_n/\mu|}r^{(n)}_\mu .
\end{align*}
\end{prop}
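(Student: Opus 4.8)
The plan is to prove the proposition exactly as sketched in the sentence preceding the statement, by comparing, for each of the two products $\prod_{1\le i<j\le n}(1+y_i+y_j)$ and $\prod_{1\le i\le j\le n}(1+y_i+y_j)$, the two Schur expansions now available for it. First I would record two preliminary observations. Every partition $\mu\subseteq\delta_n$ and every partition $\lambda\subseteq\delta_{n+1}$ has at most $n$ parts, since $\delta_n$ and $\delta_{n+1}$ do. Consequently, on the one hand the Schur polynomials $s_\nu(y_1,\dots,y_n)$ occurring in \eqref{eq:ext}, \eqref{eq:sym}, \eqref{eq:BRT1}, and \eqref{eq:BRT2} are linearly independent, so one may compare their coefficients; and on the other hand the hypothesis $\ell(\cdot)\le n$ of Proposition~\ref{prop:KS} is met, so that $d^{(n)}_{\delta_n,\mu}=|\SSCT_n(\delta_n/\mu)|$ and $d^{(n)}_{\delta_{n+1},\lambda}=|\SSCT_n(\delta_{n+1}/\lambda)|$.

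For the first identity I would equate \eqref{eq:ext} and \eqref{eq:BRT1} and read off the coefficient of $s_\mu(y_1,\dots,y_n)$ for each $\mu\subseteq\delta_n$, obtaining $2^{|\mu|-\binom n2}\,d^{(n)}_{\delta_n,\mu}=r^{(n)}_\mu$. Since $|\delta_n|=\binom n2$ and $\mu\subseteq\delta_n$, we have $|\delta_n/\mu|=\binom n2-|\mu|$, so this rearranges immediately to $|\SSCT_n(\delta_n/\mu)|=2^{|\delta_n/\mu|}r^{(n)}_\mu$.

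For the second identity I would equate \eqref{eq:sym} and \eqref{eq:BRT2} and read off the coefficient of $s_\lambda(y_1,\dots,y_n)$ for each $\lambda\subseteq\delta_{n+1}$, obtaining
\[
2^{|\lambda|-\binom n2}\,d^{(n)}_{\delta_{n+1},\lambda}=\sum_{\substack{\mu\subseteq\lambda\cap\delta_n\\ \lm\text{ a vertical strip}}}2^{|\lm|}r^{(n)}_\mu .
\]
Multiplying by $2^{\binom n2-|\lambda|}$ and using $|\lm|=|\lambda|-|\mu|$ together with $|\delta_n/\mu|=\binom n2-|\mu|$ (valid since each $\mu$ in the sum satisfies $\mu\subseteq\delta_n$), the exponent of $2$ in the $\mu$-summand becomes $\binom n2-|\lambda|+|\lambda|-|\mu|=|\delta_n/\mu|$, and we conclude $|\SSCT_n(\delta_{n+1}/\lambda)|=\sum_\mu 2^{|\delta_n/\mu|}r^{(n)}_\mu$ over the same index set, which is the second claim.

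There is no substantial analytic obstacle: the whole argument is a routine comparison of coefficients in a basis of symmetric polynomials. The only points that genuinely need care are the bookkeeping of the powers of $2$ (resting on the elementary identity $|\delta_n|=\binom n2$ and on $\mu\subseteq\delta_n$) and the verification that Proposition~\ref{prop:KS} is applicable, so that the determinants $d^{(n)}_{\delta_n,\mu}$ and $d^{(n)}_{\delta_{n+1},\lambda}$ may legitimately be replaced by $|\SSCT_n(\delta_n/\mu)|$ and $|\SSCT_n(\delta_{n+1}/\lambda)|$.
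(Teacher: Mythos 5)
Your proposal is correct and follows exactly the route the paper takes: the paper's entire proof is the sentence ``Comparing the Schur coefficients in \eqref{eq:ext}, \eqref{eq:sym}, \eqref{eq:BRT1}, and \eqref{eq:BRT2}, and using the fact $d^{(n)}_\lm=|\SSCT_n(\lm)|$, we obtain the following proposition,'' and you have simply carried out that coefficient comparison with the power-of-$2$ bookkeeping made explicit. The details you supply (linear independence of the $s_\nu(y_1,\dots,y_n)$ for $\ell(\nu)\le n$, the identity $|\delta_n/\mu|=\binom n2-|\mu|$, and the applicability of Proposition~\ref{prop:KS}) are all accurate.
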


The objects in $\SSCT_n(\delta_n/\mu)$ and those counting $r_\mu^{(n)}$ have somewhat similar conditions on their entries but their shapes are complementary: $\delta_n/\mu$ and $\mu$. Understanding the connection between these two objects will be very interesting.

\begin{problem}
Find a bijective proof of  Proposition~\ref{prop:BRT}.
\end{problem}

 In a forthcoming paper \cite{BLHT}, the first author, Keating and Nicoletti show that  lecture hall tableaux
are in bijection with a certain dimer model on a graph whose faces are hexagons and octagons. Moreover they 
show that bounded lecture hall tableaux of a ``large" shape
exhibit the arctic curve phenomenon.

\end{document}